\documentclass[11pt]{amsart}%
\usepackage[latin9]{inputenc}
\usepackage{amsmath}
\usepackage{amssymb}
\usepackage{color}
\usepackage{amsfonts}
\usepackage{amsthm}
\usepackage{latexsym}
\usepackage{graphicx}
\usepackage{cite}%

\newtheorem{theorem}{Theorem}
\newtheorem{lemma}[theorem]{Lemma}
\newtheorem{proposition}[theorem]{Proposition}

\newtheorem{corollary}[theorem]{Corollary}

\textwidth 6.5in
\textheight 8.95in
\voffset = -2.0cm
\hoffset = -0.75in


\definecolor{re}{rgb}{0.5,0.1,0}
\definecolor{blu}{rgb}{0,0.3,0.6}

\begin{document}
\title[Weighted Radon transforms of vector fields, with applications to MAET]{Weighted Radon transforms of vector fields, with applications to
magnetoacoustoelectric tomography}
\author{L Kunyansky$^1$, E McDugald$^1$ and B Shearer$^2$}

\address{$^1$ Department of Mathematics,
University of Arizona,
Tucson, AZ 85721,
United States of America
}

\address{$^2$ Department of Physics,
Cornell University,
Ithaca, NY 14853,
United States of America
}

\begin{abstract}
Currently, theory of ray transforms of vector and tensor fields
is well developed, but the Radon transforms of such fields have not been fully
analyzed. We thus consider linearly weighted and unweighted longitudinal and transversal
Radon transforms of vector fields. As usual, we use the standard Helmholtz decomposition of
smooth and fast decreasing vector fields over the whole space. We show that such a decomposition
produces potential and solenoidal components decreasing at infinity fast enough to
guarantee the existence of the unweighted longitudinal and transversal
Radon transforms of these components.

It is known that reconstruction of an arbitrary vector field from only longitudinal or
only transversal transforms is impossible. However, for the cases when both
linearly weighted and unweighted transforms of either one of the types are known,
we derive explicit inversion formulas for the full reconstruction of the field.
Our interest in the inversion of such transforms stems from a certain inverse
problem arising in magnetoacoustoelectric tomography (MAET). The connection
between the weighted Radon transforms and MAET is exhibited in the paper.
Finally, we demonstrate performance and noise sensitivity of the new inversion
formulas in numerical simulations.

\end{abstract}
\maketitle

\textit{Keywords:} Vector tomography, longitudinal Radon transform, transversal Radon transform, wieighted Radon transform,
explicit inversion formula

\section{Introduction\label{S:vectorRadon}}

In this paper we study unweighted and linearly weighted Radon transforms of
vector fields. There is a significant body of work on ray transforms (that
involve integration over straight lines) of vector and tensor fields
\cite{norton-vector89,norton-vector92,strahlen-review,schuster-review,shar-book}%
. In particular, exponential and attenuated ray transforms were studied
in \cite{strahlen-expo,bukh-kaz-vector,natt-vector,bal-atten}, and momentum
ray transforms were investigated in \cite{shar-mom,mishra-weight}. However,
when it comes to the Radon transforms of vector fields (with integration over
hyperplanes), there are very few publications \cite{polya2015,polya2015num};
moreover, the consideration is usually restricted to unweighted transforms of
potential fields with finitely supported potentials. In the present paper we
consider general vector fields (i.e. not purely potential or solenoidal), and
we study both unweighted and linearly weighted Radon transforms.

As in the case of ray transforms, when studying the Radon transforms one
finds it convenient to use the Helmholtz decomposition. In other words, one splits a general vector field $F$ into the potential and
solenoidal parts $F^{\mathrm{p}}$ and $F^{\mathrm{s}},$ and considers
transversal and longitudinal Radon transforms of both $F^{\mathrm{p}}$ and
$F^{\mathrm{s}}$. However, even for a finitely supported field $F$ components
$F^{\mathrm{p}}$ and $F^{\mathrm{s}}$ are defined in the whole space
$\mathbb{R}^{d}$ and they are known to have only a polynomial decay at infinity. Thus,
in order to analyze the Radon transforms of $F^{\mathrm{p}}$ and
$F^{\mathrm{s}}$ one first needs to prove that such transforms do exist (i.e.
integrals over hyperplanes in $\mathbb{R}^{d}$ converge). This is not
completely trivial. In particular, the estimate given in the foundational book
\cite{shar-book} on ray transforms does not guarantee the convergence of the Radon transforms.
Thus, first we obtain an improved estimate for the rate of decay
at infinity of the potential and solenoidal parts $F^{\mathrm{p}}$ and
$F^{\mathrm{s}}$ of a fast decaying field $F.$ This estimate guarantees the
existence of the unweighted longitudinal and transversal Radon transforms of
$F^{\mathrm{p}}$ and $F^{\mathrm{s}}$.

Similarly to the case of ray transforms, the longitudinal Radon transforms
of a potential field vanish. The same is true for the transversal transform of
a solenoidal field. Therefore, reconstructing a general vector field from only
the longitudinal or only the transversal transform(s) is not possible.
However, it is not unusual in practice \cite{strahlen-expo} that one of the
transform types (either longitudinal or transversal) cannot be measured. In
order to replace missing information one may consider measuring weighted
transforms of the available type. For example, our interest in this problem
stems from a certain measurement scheme in the magnetoacoustoelectric
tomography (MAET). This scheme does not permit measuring a transversal
transform of a certain vector field, but, in addition to longitudinal
transforms one can measure linearly weighted longitudinal transforms of that field.

Below we present explicit formulas for solving two distinct problems. The
first problem is that of reconstructing a general vector field from known
values of its transversal transform, and from $d-1$ weighted transversal
transforms with various linear weights. The second problem (motivated by MAET)
is the reconstruction of a general vector field from $d-1$ of its longitudinal
transforms and one weighted longitudinal transform (again, with a linear
weight). The reader may want to compare our solutions of these problems
to the results of~\cite{mishra-weight}, where a full vector field is
reconstructed from a ray transform and a first-moment ray transform.

The rest of the paper is organized as follows. We define all the needed
transforms in Section~\ref{S:Formulation}.1 below, and we present
explicit solutions to the above two problems in Section~~\ref{S:Formulation}.2.
In Sections~\ref{S:trans} and~\ref{S:long} we provide proofs of the theorems
formulated in Section~\ref{S:Formulation}. Section~\ref{S:MAET} exhibits a potential application
of the Radon transforms of vector fields to a problem arising in MAET. We
further validate our theoretical results by numerical simulations,
see  Section \ref{S:NUMERICS}. Finally, the
proof of Theorem \ref{T:my_Helm} (on the rates of decay of $F^{\mathrm{p}}$
and $F^{\mathrm{s}}$) is relegated into the Appendix.

\section{Formulation of the main results\label{S:Formulation}}

\subsection{Definitions and technical estimates}

Consider a continuous function $f(x)$ defined in $\mathbb{R}^{d}$, subject to
the condition $f(x)=\mathcal{O}\left(  |x|^{-d}\right)  $ at infinity. Define
a hyperplane $\Pi(\omega,p)$ by the equation $\omega\cdot x=p,$ where
$\mathbb{S}^{d-1}$ is the unit sphere in $\mathbb{R}^{d}$, and ($\omega
,p)\in\mathbb{S}^{d-1}\times\mathbb{R}$. The Radon transform $\mathcal{R}f$ is
defined as the set of integrals of $f$ over all the hyperplanes:%
\[
\left[  \mathcal{R}f\right]  (\omega,p)\equiv\int\limits_{\Pi(\omega
,p)}f(x)\,dA_{\Pi}(x),\quad(\omega,p)\in\mathbb{S}^{d-1}\times\mathbb{R},
\]
where $dA_{\Pi}(x)$ is the standard area element on $\Pi(\omega,p).$
Properties of the Radon transform are traditionally studied for functions
$f(x)$ from the Schwartz class $\mathcal{S}(\mathbb{R}^{d})$. We recall that
this class consists of all $C^{\infty}(\mathbb{R}^{d})$ functions $f(x)$ whose
derivatives decay at infinity faster than any rational function:%

\begin{equation}
\sup_{x\in\mathbb{R}^{d}}|x^{\beta}D^{\alpha}f(x)|<\infty,\quad|\alpha
|=0,1,2,...,\quad|\beta|=0,1,2,..., \label{E:decay}%
\end{equation}
where $\alpha$ and $\beta$ are multiindeces, $\alpha=(\alpha_{1}%
,...,\alpha_{d}),$ $\beta=(\beta_{1},...,\beta_{d}),$ $\alpha_{j}$'s and
$\beta_{i}$'s are non-negative integers, $|\alpha|=\sum_{j=1}^{d}|\alpha
_{j}|,$ $|\beta|=\sum_{i=1}^{d}|\beta_{j}|,$ and
\[
D^{\alpha}f(x)=\frac{\partial^{|\alpha|}}{\partial^{\alpha_{1}}x_{1}%
\partial^{\alpha_{2}}x_{2}...\partial^{\alpha_{d}}x_{d}}f(x),\quad x^{\beta
}=x_{1}^{\beta_{1}}x_{2}^{\beta_{2}}...x_{d}^{\beta_{d}}.
\]
A function $f(x)\in\mathcal{S}(\mathbb{R}^{d})$ can be reconstructed from its
projections $g=\mathcal{R}f$ using the well known filtered backprojection
inversion formula~\cite{NattBook}:

\begin{equation}
f=\mathcal{R}^{-1}(g)\equiv\frac{1}{2}(2\pi)^{1-d}I^{-\alpha}\mathcal{R}%
^{\#}I^{\alpha-d+1}g, \label{E:FBP}%
\end{equation}
where $\mathcal{R}^{\#}$ is the dual Radon transform that acts on a function
$g(\omega,p)$ defined { on} $\mathbb{S}^{d-1}\times\mathbb{R}$ according to the
formula%
\[
\lbrack\mathcal{R}^{\#}g](x)=\int\limits_{\mathbb{S}^{d-1}}g(\omega
,\omega\cdot x)d\omega,
\]
and where the Riesz potential $I^{\alpha}f$ of a function $f$ is expressed
through the direct and inverse Fourier transforms $\mathcal{F}$ and
$\mathcal{F}^{-1}$ as follows%
\[
\lbrack I^{\alpha}f](x)=[\mathcal{F}^{-1}(|\xi|^{-\alpha}[\mathcal{F}%
f](\xi))](x).
\]

Let us consider now a continuous vector field $F(x)=(F_{1}(x),...,F_{d}(x))$
defined on $\mathbb{R}^{d},$ $d\geq2,$ whose $\,$components decay fast enough
for the existence of integrals over each hyperplane (e.g., $|F(x)|=\mathcal{O}%
\left(  |x|^{-d}\right)  $)$.$ Below we define several types of Radon
transforms of such a field.

The componentwise Radon transform $\mathfrak{R}F$ of $F$ is defined in the
obvious way:%
\[
\left[  \mathfrak{R}F\right]  (\omega,p)\equiv\left(  \mathcal{RF}%
_{1},..,\mathcal{RF}_{d}\right)  (\omega,p),\qquad(\omega,p)\in\mathbb{S}%
^{d-1}\times\mathbb{R}.
\]
The transversal Radon transform $\mathcal{D}^{\perp}F$ is the Radon transform
of the projection of $F$ onto the normal $\omega$ to the plane $\Pi(\omega
,p)$:%
\begin{equation}
\left[  \mathcal{D}^{\perp}F\right]  (\omega,p)\equiv\int\limits_{\Pi
(\omega,p)}\omega\cdot F(x)\,dA_{\Pi}(x)=\left[  \mathcal{R}(\omega\cdot
F(x))\right]  (\omega,p),\quad(\omega,p)\in\mathbb{S}^{d-1}\times\mathbb{R}.
\label{E:def_transversal}%
\end{equation}
For each fixed direction $\omega\in\mathbb{S}^{d-1}$, let us arbitrarily
extend $\omega$ to an orthonormal basis $\mathfrak{B}=(\omega,\omega
_{1},...,\omega_{d-1})$ of $\mathbb{R}^{d},$ where\ $\omega_{j}=\omega
_{j}(\omega),$ $j=1,...,d-1.$ To simplify the notation, below we will suppress
the dependence of $\omega_{j}$'s on$\ \omega.$ Define the longitudinal Radon
transforms $\mathcal{D}_{k}^{\shortparallel}F$ of $F,$ $k=1,...d-1$, as
follows:%
\begin{equation}
\left[  \mathcal{D}_{k}^{\shortparallel}F\right]  (\omega,p)\equiv
\int\limits_{\Pi(\omega,p)}\omega_{k}\cdot F(x)\,dA_{\Pi}(x)=\left[
\mathcal{R}(\omega_{k}\cdot F(x))\right]  (\omega,p),\quad(\omega
,p)\in\mathbb{S}^{d-1}\times\mathbb{R}. \label{E:def_longitudinal}%
\end{equation}
For a faster decaying vector field $F(x)$ (e.g. satisfying $|F(x)|=\mathcal{O}%
\left(  |x|^{-d-1}\right)  $), one can define the weighted transversal
transforms $\mathcal{W}_{k}^{\perp}$ and longitudinal transforms
$\mathcal{W}_{k}^{\shortparallel}$ with linear weights $\omega_{k}\cdot x,$
$k=1,...d-1,$ by the following expressions:%
\begin{align}
\left[  \mathcal{W}_{k}^{\perp}F\right]  (\omega,p)  &  \equiv\int%
\limits_{\Pi(\omega,p)}(\omega_{k}\cdot x)F(x)\cdot\omega\,dA_{\Pi}(x)=\left[
\mathcal{D}^{\perp}((\omega_{k}\cdot x)F(x))\right]  (\omega
,p),\label{E:def_Wk}\\
\left[  \mathcal{W}_{k}^{\shortparallel}F\right]  (\omega,p)  &  \equiv
\int\limits_{\Pi(\omega,p)}(\omega_{k}\cdot x)F(x)\cdot\omega_{k}\,dA_{\Pi
}(x)=\left[  \mathcal{D}_{k}^{\shortparallel}((\omega_{k}\cdot x)F(x))\right]
(\omega,p), \label{E:def_long_Wk}%
\end{align}
with $(\omega,p)\in\mathbb{S}^{d-1}\times\mathbb{R}$.

{ The present definitions of the unweighted longitudinal and transversal Radon transforms
coincide with those given in \cite{strahlen-expo,schuster-review} (where they
are mentioned under the names of ``probe'' and ``normal'' transforms, respectively).
Our definitions of the weighted transforms appear to be new; they naturally extend
the notion of ``moments ray transforms''~\cite{strahlen-expo,mishra-weight} to the case of Radon transforms.}

{
It is well known that the Radon transform of a scalar function considered on
$\mathbb{S}^{d-1}\times \mathbb{R}$ is redundant. Indeed, since $\Pi (\omega
,p)=\Pi (-\omega ,-p)$, one concludes that $\left[ \mathcal{R}f\right]
(\omega ,p)=\left[ \mathcal{R}f\right] (-\omega ,-p)$. Similarly, by
inspecting equation (\ref{E:def_transversal}) one can see that $\left[
\mathcal{D}^{\perp }F\right] (\omega ,p)=-\left[ \mathcal{D}^{\perp }F\right]
(-\omega ,-p)$, where the change of sign occurs due to the factor $\omega
\cdot $ under the integral. The definitions of transforms $\mathcal{D}%
_{k}^{\shortparallel }$, $\mathcal{W}_{k}^{\perp }$, and $\mathcal{W}%
_{k}^{\shortparallel }$ depend on two vectors, $\omega $ and $\omega _{k}$.
In general, our definition of basis $\mathfrak{B}$ permits a significant freedom in choosing the
dependence $\omega _{k}=\omega _{k}(\omega )$. However, if we restrict consideration
to the case $\omega
_{k}(\omega )=-\omega _{k}(-\omega )$, the following redundancies will arise%
\begin{eqnarray*}
\left[ \mathcal{D}_{k}^{\shortparallel }F\right] (\omega ,p) &=&-\left[
\mathcal{D}_{k}^{\shortparallel }F\right] (-\omega ,-p),\quad (\omega ,p)\in
\mathbb{S}^{d-1}\times \mathbb{R},\quad k=1,...d-1, \\
\left[ \mathcal{W}_{k}^{\perp }F\right] (\omega ,p) &=&\left[ \mathcal{W}%
_{k}^{\perp }F\right] (-\omega ,-p),\quad (\omega ,p)\in \mathbb{S}%
^{d-1}\times \mathbb{R},\quad k=1,...d-1, \\
\left[ \mathcal{W}_{k}^{\shortparallel }F\right] (\omega ,p) &=&\left[
\mathcal{W}_{k}^{\shortparallel }F\right] (-\omega ,-p),\quad (\omega ,p)\in
\mathbb{S}^{d-1}\times \mathbb{R},\quad k=1,...d-1.
\end{eqnarray*}%
Such redundancies can be exploited in practice, to reduce the number of
required measurements and to halve the number of floating point operations
when implementing inversion formulas, both known and the ones presented
below. (For example, operator $\mathcal{R}^{\#}$ in (\ref{E:FBP}) can be
computed by integration over a half of a sphere.) However, since the focus of
this paper is mostly theoretical, for simplicity of presentation we will
work with projections defined on $\mathbb{S}^{d-1}\times \mathbb{R}$.
}

For the future reference
we note the obvious relations%
\begin{equation}
\left[  \mathcal{D}^{\perp}F\right]  (\omega,p)=\omega\cdot\left[
\mathfrak{R}F\right]  (\omega,p),\qquad\left[  \mathcal{D}_{k}^{\shortparallel
}F\right]  (\omega,p)=\omega_{k}\cdot\left[  \mathfrak{R}F\right]
(\omega,p),\quad(\omega,p)\in\mathbb{S}^{d-1}\times\mathbb{R}.
\label{E:vectorRadon}%
\end{equation}

Let us now consider a smooth and fast decaying vector field $F(x)$ such that
each component $F_{m}(x)$ of $F(x)$ is a function from the Schwartz space
$\mathcal{S}(\mathbb{R}^{d})$. We define the potential $\varphi$ as the
convolution of the divergence $\Phi$ of $F$ with the fundamental solution $G$
of the Laplace equation in~$\mathbb{R}^{d}$:
\begin{equation}
\varphi(x)=(\Phi\ast G)(x)=\int\limits_{\mathbb{R}^{d}}\Phi(y)G(x-y)dy,\qquad
\Phi(x)=\operatorname{div}F(x),\qquad x\in\mathbb{R}^{d},
\label{E:potential_conv}%
\end{equation}
where explicit expressions for $G(x)\ $are well known:%
\[
G(x)=\frac{1}{2\pi}\ln|x|\text{ for }d=2,\qquad G(x)=-\frac{\Gamma
(d/2-1)}{4\pi^{2}}|x|^{2-d}\text{ for }d\geq3.
\]
Now the potential part $F^{\mathrm{p}}$ of the field $F$ is the gradient of
$\varphi$:%
\begin{equation}
F^{\mathrm{p}}(x)=\nabla\varphi(x),\qquad x\in\mathbb{R}^{d},
\label{E:def_pot_part}%
\end{equation}
and the solenoidal part $F^{\mathrm{s}}$ is just the difference%
\begin{equation}
F^{\mathrm{s}}(x)=F(x)-F^{\mathrm{p}}(x),\qquad x\in\mathbb{R}^{d}.
\label{E:def_sol_part}%
\end{equation}

The following theorem is a technical result that is an important tool in our investigation.

\begin{theorem}
Suppose that each component $F_{k}(x)$, $k=1,...,d$ of a vector field $F(x)$
is a function from the Schwartz class $\mathcal{S}(\mathbb{R}^{d}).$ Then
potential $\varphi$ and fields $F^{\mathrm{p}}$ and $F^{\mathrm{s}}$ given by
equations (\ref{E:potential_conv})-(\ref{E:def_sol_part}) have the following
decay rates at infinity\label{T:my_Helm}
\begin{align}
|\varphi(x)|  &  =\mathcal{O}\left(  \frac{1}{|x|^{d-1}}\right)
,\label{E:thm_phi}\\
|F^{\mathrm{p}}(x)|  &  =\mathcal{O}\left(  \frac{1}{|x|^{d}}\right)
,\qquad|F^{\mathrm{s}}(x)|=\mathcal{O}\left(  \frac{1}{|x|^{d}}\right)
,\label{E:thm_fields}\\
\left\vert \frac{\partial}{\partial x_{j}}F^{\mathrm{p}}(x)\right\vert  &
=\mathcal{O}\left(  \frac{1}{|x|^{d+1}}\right)  ,\qquad\left\vert
\frac{\partial}{\partial x_{j}}F^{\mathrm{s}}(x)\right\vert =\mathcal{O}%
\left(  \frac{1}{|x|^{d+1}}\right)  ,\label{E:thm_derivatives}\\
\left\vert \frac{\partial^{2}}{\partial x_{j}\partial x_{k}}F^{\mathrm{p}%
}(x)\right\vert  &  =\mathcal{O}\left(  \frac{1}{|x|^{d+2}}\right)
,\qquad\left\vert \frac{\partial^{2}}{\partial x_{j}\partial x_{k}%
}F^{\mathrm{s}}(x)\right\vert =\mathcal{O}\left(  \frac{1}{|x|^{d+2}}\right)
,\qquad j,k=1,2,...,d. \label{E:thm_second_der}%
\end{align}

\end{theorem}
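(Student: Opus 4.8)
The plan is to reduce every one of the stated estimates to a single convolution decay lemma, exploiting the fact that $\Phi=\operatorname{div}F$ and all of its derivatives carry vanishing moments. First I would record the structural observation that underlies everything: since $F$ is Schwartz, so is $\Phi=\operatorname{div}F$, and integration by parts gives
\[
\int_{\mathbb{R}^d} y^\beta\, \partial^\gamma \Phi(y)\,dy = (-1)^{|\gamma|}\int_{\mathbb{R}^d}\big(\partial^\gamma y^\beta\big)\Phi(y)\,dy = 0 \qquad\text{whenever } |\beta|\le|\gamma|,
\]
because $\partial^\gamma y^\beta$ vanishes identically for $|\beta|<|\gamma|$ and equals a constant times $\int\Phi=\int\operatorname{div}F=0$ when $|\beta|=|\gamma|$. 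Thus $\partial^\gamma\Phi$ has vanishing moments up to order $|\gamma|$, and this is the engine that upgrades the decay one power at a time.

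Next I would isolate the analytic core as a lemma: if $g\in\mathcal{S}(\mathbb{R}^d)$ has vanishing moments up to order $M$, and $K$ is smooth away from the origin, locally integrable near it, with $|\partial^\beta K(z)|\le C_\beta|z|^{-s-|\beta|}$ for $|\beta|\le M+1$ and some $0\le s<d$, then $(g*K)(x)=\mathcal{O}(|x|^{-s-M-1})$. To prove this I would write, using the vanishing moments, $(g*K)(x)=\int g(y)\,\big[K(x-y)-T_{M}(x;y)\big]\,dy$, where $T_{M}(x;y)$ is the degree-$M$ Taylor polynomial (in $y$) of $y\mapsto K(x-y)$ about $y=0$, each of whose monomials integrates to zero against $g$. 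On the region $|y|\le|x|/2$ the bracket is the order-$(M+1)$ Taylor remainder, bounded by $C|y|^{M+1}\sup_{|z|\ge|x|/2}|\nabla^{M+1}K(z)|\le C|y|^{M+1}|x|^{-s-M-1}$, and integrating against $g$ uses only the finite moment $\int|y|^{M+1}|g(y)|\,dy<\infty$. On the complementary region $|y|>|x|/2$ the Schwartz decay of $g$ beats any power of $|x|$; the only delicate point is the local singularity of $K$ at $y=x$, which I would dispatch by splitting off the ball $|x-y|\le1$ and using $\int_{|z|\le1}|K(z)|\,dz<\infty$.

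With the lemma in hand the theorem follows by bookkeeping. Since $\varphi=\Phi*G$ with $|\partial^\beta G(z)|\le C|z|^{2-d-|\beta|}$ for $|\beta|\ge1$ and $\Phi$ has $M=0$, I get $|\varphi(x)|=\mathcal{O}(|x|^{-(d-1)})$. Writing $F^{\mathrm{p}}_i=\partial_i\varphi=\Phi*(\partial_i G)$ with the locally integrable kernel $\partial_i G$ homogeneous of degree $1-d$ (so $s=d-1$), the same lemma with $M=0$ yields $|F^{\mathrm{p}}(x)|=\mathcal{O}(|x|^{-d})$. The derivative estimates come essentially for free by moving the differentiations onto $\Phi$: $\partial^\gamma F^{\mathrm{p}}_i=(\partial^\gamma\Phi)*(\partial_i G)$, and since $\partial^\gamma\Phi$ has $M=|\gamma|$ vanishing moments the lemma returns $\mathcal{O}(|x|^{-(d-1)-|\gamma|-1})=\mathcal{O}(|x|^{-d-|\gamma|})$, which is exactly the claimed rate for $|\gamma|=1,2$. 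Finally $F^{\mathrm{s}}=F-F^{\mathrm{p}}$ inherits the decay of $F^{\mathrm{p}}$, because $F\in\mathcal{S}$ decays faster than every negative power.

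I expect the main obstacle to be the derivative estimates rather than the decay of $\varphi$ or $F^{\mathrm{p}}$ themselves. The naive route of differentiating under the convolution puts the derivatives onto $G$, producing the non-integrable Calder\'on--Zygmund kernel $\partial_i\partial_jG$ (homogeneous of degree $-d$) and forcing principal-value arguments. The idea that sidesteps this is to keep every derivative on the Schwartz factor $\Phi$, so that one always convolves against the mild, locally integrable kernel $\partial_iG$; the extra vanishing moments generated by each differentiation of $\Phi$ are precisely what supply the additional power of decay. What remains is careful bookkeeping near $y=x$ and, only for $\varphi$ when $d=2$, the logarithmic growth of $G$ itself --- harmless because it multiplies a super-polynomially small tail integral, and because the mean-value step there uses only $\nabla G\sim|z|^{1-d}$ rather than a bound on $G$.
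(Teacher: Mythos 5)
Your proposal is correct, but it takes a genuinely different route from the paper's proof. The paper never invokes moment cancellation: it exploits the divergence structure $\Phi=\operatorname{div}F$ directly, integrating by parts once to rewrite $\Phi\ast G$ (away from the singularity, which is isolated by a smooth cutoff $\eta$) as $\sum_j F_j\ast\partial_j\left[G\,(1-\eta)\right]$, and then it gains every further power of decay by placing additional derivatives on the \emph{kernel}, using $|D^{\alpha}G(x)|=\mathcal{O}\left(|x|^{2-d-|\alpha|}\right)$ for $|\alpha|\le 4$; its core analytic tool is the elementary two-region convolution estimate of Lemma~\ref{T:convlemma}, which says that $f\ast g$ inherits the decay rate of the slower factor provided the faster one decays at rate at least $d$ higher. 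You go the opposite way: you keep the kernel fixed at $\partial_i G$ and move all derivatives onto $\Phi$, compensating with the vanishing moments $\int y^{\beta}\partial^{\gamma}\Phi\,dy=0$ for $|\beta|\le|\gamma|$ (these encode the same cancellation the paper obtains by integration by parts, since both rest on $\int\operatorname{div}F=0$), and your core tool is the classical multipole/Taylor-subtraction lemma in which each vanishing moment buys one extra power of decay beyond the kernel's own rate. The two mechanisms require the same regularity of $G$ off the origin (fourth-order derivatives) and both must treat the singularity at $y=x$ separately --- the paper via the cutoff $\eta$, you by splitting off a unit ball inside the far region. What your version buys is modularity and generality: the single statement ``$M$ vanishing moments plus Calder\'on--Zygmund-type derivative bounds on the kernel yield a gain of $M+1$ powers'' delivers all four estimates of the theorem uniformly, and would give $\mathcal{O}\left(|x|^{-d-|\gamma|}\right)$ for derivatives of every order $\gamma$ at no extra cost, whereas the paper's bookkeeping is repeated order by order. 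What the paper's version buys is that each step is elementary (no Taylor expansion of the kernel), with all singularity issues confined to a compactly supported, super-polynomially small term. Finally, your treatment of the one delicate case --- the logarithmic growth of $G$ when $d=2$, where the $\beta=0$ bound in your lemma fails --- is sound: the Taylor-remainder step uses only $\nabla G\sim|z|^{1-d}$, and the logarithm in the far region is annihilated by the Schwartz decay of $\Phi$.
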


The estimates (\ref{E:thm_phi})-(\ref{E:thm_second_der}) are a refinement of
the well known estimate on the rate of decay of $F^{\mathrm{p}}$ and
$F^{\mathrm{s}}$ given by Theorem 2.6.2 of \cite{shar-book}:
\begin{equation}
|F^{\mathrm{s}}(x)|\leq C(1+|x|)^{1-d}, \label{E:sharafut}%
\end{equation}
with the similar bound on $F^{\mathrm{p}}$. The importance of estimates
(\ref{E:thm_phi})-(\ref{E:thm_second_der}) for the present work is in that
they guarantee existence of the transversal, longitudinal, and component-wise
Radon transforms of $F^{\mathrm{p}}$ and $F^{\mathrm{s}},$ so that
\begin{equation}
\mathfrak{R}F=\mathfrak{R}F^{\mathrm{p}}+\mathfrak{R}F^{\mathrm{s}}%
,\quad\mathcal{D}^{\perp}F=\mathcal{D}^{\perp}F^{\mathrm{p}}+\mathcal{D}%
^{\perp}F^{\mathrm{s}},\quad\mathcal{D}_{k}^{\shortparallel}F=\mathcal{D}%
_{k}^{\shortparallel}F^{\mathrm{p}}+\mathcal{D}_{k}^{\shortparallel
} F^{\mathrm{s}}, \label{E:split}%
\end{equation}
with $k=1,2,...,d-1.$ Transforms $\mathcal{W}_{k}^{\shortparallel
}F^{\mathrm{p}}$, $\mathcal{W}_{k}^{\shortparallel}F^{\mathrm{s}}$,
$\mathcal{W}_{k}^{\perp}F^{\mathrm{p}}$, and $\mathcal{W}_{k}^{\perp
}F^{\mathrm{s}}$ cannot be defined, in general.
{ Indeed,
according to definitions~(\ref{E:def_Wk}) and ~(\ref{E:def_long_Wk}), such transforms would
require integration  of fields $F^{\mathrm{s}}$ and $F^{\mathrm{p}}$  multiplied
by linear functions in $x$, over hyperplanes in $\mathbb{R}^d$. Such products decay at infinity at the rate $\mathcal{O}(|x|^{1-d})$.
Such decay is not sufficient for the existence of the integrals. }

\subsection{Main theorems}

The main results of this paper are the following two theorems:

\begin{theorem}
If an infinitely differentiable vector field $F(x)=(F_{1}(x),...,F_{d}(x))$
satisfies decay conditions (\ref{E:decay}), its divergence $\Phi$ can be
reconstructed from the transversal transform $\mathcal{D}^{\perp}F$ by
applying the inversion formula (\ref{E:FBP}) as follows%
\begin{equation}
\Phi(x)=\left[  \mathcal{R}^{-1}\left(  \frac{\partial}{\partial p}%
\mathcal{D}^{\perp}F\right)  \right]  (x),\qquad x\in\mathbb{R}^{d}.
\label{E:T_trans_1}%
\end{equation}
Further, the componentwise Radon transform of $F$ can be reconstructed from
$\mathcal{D}^{\perp}F$ and\ weighted transversal transforms with linear
weights $\mathcal{W}_{k}^{\perp}F$, $k=1,..,d-1,$ as follows:\label{T:trans}%
\begin{equation}
\lbrack\mathfrak{R}F](\omega,p)=\omega\lbrack\mathcal{D}^{\perp}%
F](\omega,p)+\sum_{k=1}^{d-1}\omega_{k}\left(  \frac{\partial}{\partial
p}[\mathcal{W}_{k}^{\perp}F](\omega,p)-[\mathcal{R}\{(\omega_{k}\cdot
x)\Phi(x)\}](\omega,p)\right)  \label{E:T_trans_2}%
\end{equation}
where $(\omega,p)\in\mathbb{S}^{d-1}\times\mathbb{R}$, $j=1,2,...,d.$ Finally,
field $F$ can be recovered by inverting $\mathfrak{R}F$ componentwise:%
\begin{equation}
F_{j}(x)=\mathcal{R}^{-1}\left(  e_{j}\cdot\mathfrak{R}F\right)  (x),\qquad
x\in\mathbb{R}^{d},\qquad j=1,2,...,d, \label{E:T_trans_3}%
\end{equation}
where vectors $e_{1,}e_{2},...,e_{d}$ form the canonical orthonormal basis in
$\mathbb{R}^{d}$, and where $\mathcal{R}^{-1}$ is understood as the
filtration/backprojection formula (\ref{E:FBP}).
\end{theorem}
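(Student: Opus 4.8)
The statement has three parts — the divergence-recovery formula (\ref{E:T_trans_1}), the componentwise formula (\ref{E:T_trans_2}), and the final componentwise inversion (\ref{E:T_trans_3}) — and the natural strategy is to prove them in the order (\ref{E:T_trans_1}), then (\ref{E:T_trans_2}), with (\ref{E:T_trans_3}) following immediately from the standard filtered-backprojection inversion (\ref{E:FBP}) applied componentwise.

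Let me think about each piece.

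Part 1, the divergence formula. The transversal transform is $[\mathcal{D}^\perp F](\omega,p) = [\mathcal{R}(\omega \cdot F)](\omega,p)$. The key identity I want is that differentiating in $p$ turns the Radon transform of $\omega \cdot F$ into the Radon transform of $\operatorname{div} F$. This is the standard "intertwining" property of the Radon transform: $\partial_p [\mathcal{R}(\omega \cdot F)](\omega,p) = [\mathcal{R}(\operatorname{div} F)](\omega, p)$. Why? Because $\frac{\partial}{\partial p}\mathcal{R}(g) = \mathcal{R}(\omega \cdot \nabla g)$ for a scalar $g$ — wait, let me be careful. The relation is $\partial_p [\mathcal{R} h](\omega,p) = [\mathcal{R}(\partial_{\nu} h)]$ where $\partial_\nu$ is the normal derivative... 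Actually the cleanest form: for a vector field $F$, $\partial_p \mathcal{R}(\omega\cdot F) = \mathcal{R}(\operatorname{div} F)$. This follows because $\omega \cdot F$ integrated over hyperplanes, then differentiated in $p$, picks up exactly the divergence. So $\partial_p \mathcal{D}^\perp F = \mathcal{R}\Phi$, and applying $\mathcal{R}^{-1}$ gives (\ref{E:T_trans_1}). I'd verify the intertwining relation via the Fourier slice theorem: $\mathcal{F}_{p\to\sigma}[\mathcal{R}(\omega\cdot F)](\omega,\sigma) = \widehat{(\omega\cdot F)}(\sigma\omega)$ (Fourier transform of $\omega\cdot F$ along the ray $\sigma\omega$), and multiplying by $i\sigma$ (the $\partial_p$) gives $i\sigma\,\omega\cdot\widehat{F}(\sigma\omega) = \widehat{\operatorname{div} F}(\sigma\omega)$ since $\widehat{\operatorname{div} F}(\xi) = i\xi\cdot\widehat{F}(\xi)$ and $\xi = \sigma\omega$. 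That's the heart of it.

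Part 2, the componentwise formula. This is where the weighted transforms enter. From the obvious relation (\ref{E:vectorRadon}), $\omega_k \cdot [\mathfrak{R}F] = [\mathcal{D}^\parallel_k F]$, and since $\{\omega, \omega_1,\dots,\omega_{d-1}\}$ is an orthonormal basis, I can expand $[\mathfrak{R}F](\omega,p) = \omega [\mathcal{D}^\perp F] + \sum_k \omega_k [\mathcal{D}^\parallel_k F]$. So the task reduces to expressing each longitudinal component $[\mathcal{D}^\parallel_k F]$ in terms of the measured data $\mathcal{W}^\perp_k F$ and the already-recovered $\Phi$. The idea is the same intertwining trick applied to the weighted transform: from definition (\ref{E:def_Wk}), $\mathcal{W}^\perp_k F = \mathcal{D}^\perp((\omega_k\cdot x)F) = \mathcal{R}(\omega\cdot (\omega_k\cdot x)F)$. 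Differentiating in $p$ and using the Part-1 identity, $\partial_p \mathcal{W}^\perp_k F = \mathcal{R}(\operatorname{div}((\omega_k\cdot x)F))$. Now expand the divergence by the product rule: $\operatorname{div}((\omega_k\cdot x)F) = (\omega_k\cdot x)\operatorname{div}F + \nabla(\omega_k\cdot x)\cdot F = (\omega_k\cdot x)\Phi + \omega_k\cdot F$. Therefore $\partial_p \mathcal{W}^\perp_k F = \mathcal{R}((\omega_k\cdot x)\Phi) + \mathcal{R}(\omega_k\cdot F) = \mathcal{R}((\omega_k\cdot x)\Phi) + \mathcal{D}^\parallel_k F$. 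Solving, $\mathcal{D}^\parallel_k F = \partial_p \mathcal{W}^\perp_k F - \mathcal{R}((\omega_k\cdot x)\Phi)$, which is exactly the bracketed expression in (\ref{E:T_trans_2}). Substituting into the basis expansion yields (\ref{E:T_trans_2}), and (\ref{E:T_trans_3}) is then just (\ref{E:FBP}) applied to each scalar component $e_j\cdot\mathfrak{R}F$.

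**The main obstacle.** The substantive point to justify carefully is the intertwining identity $\partial_p\mathcal{R}(\omega\cdot F) = \mathcal{R}(\operatorname{div} F)$ and, more delicately, the product-rule computation inside the weighted transform: it requires differentiating under the integral sign and manipulating $\mathcal{R}((\omega_k\cdot x)\Phi)$, a transform that is well-defined because $\Phi = \operatorname{div} F \in \mathcal{S}(\mathbb{R}^d)$ decays fast enough that the linear weight does no harm. I would rely on the Schwartz-class hypothesis (\ref{E:decay}) throughout to license these operations, and on the existence of all the relevant Radon transforms, which the decay estimates of Theorem~\ref{T:my_Helm} (and the elementary decay of $F$ itself) guarantee. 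Note that the Helmholtz decomposition is not actually needed for this theorem — the field $F$ itself is Schwartz, so only the product-rule identity and the Fourier-slice intertwining relation carry the argument. Everything else is bookkeeping in the orthonormal basis $\mathfrak{B}$.
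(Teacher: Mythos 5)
Your proof is correct, and for the central formula (\ref{E:T_trans_2}) it takes a genuinely different route from the paper's. The paper works through the Helmholtz decomposition: it first shows that the transversal transform of a solenoidal field vanishes (Proposition \ref{T:prop_solenoid}) and that the longitudinal transforms of a potential field vanish (equation (\ref{E:orthogonality})), concludes $\mathfrak{R}F^{\mathrm{p}}=\omega\mathcal{D}^{\perp}F$, then applies Lemma \ref{T:Radon_of_div} to $H_{(k)}=(\omega_k\cdot x)F$ to identify $\omega_k\cdot\mathfrak{R}F^{\mathrm{s}}=\partial_p\mathcal{W}_k^{\perp}F-\mathcal{R}\{(\omega_k\cdot x)\Phi\}$, and finally assembles $\mathfrak{R}F=\mathfrak{R}F^{\mathrm{p}}+\mathfrak{R}F^{\mathrm{s}}$ --- a step that itself leans on the decay estimates of Theorem \ref{T:my_Helm} to make sense of $\mathfrak{R}F^{\mathrm{p}}$ and $\mathfrak{R}F^{\mathrm{s}}$ separately. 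You bypass all of that: you expand $\mathfrak{R}F$ pointwise in the orthonormal basis $\mathfrak{B}$ via (\ref{E:vectorRadon}), so the coefficient of $\omega$ is $\mathcal{D}^{\perp}F$ by definition, and you extract the longitudinal coefficients from the product rule $\operatorname{div}((\omega_k\cdot x)F)=(\omega_k\cdot x)\Phi+\omega_k\cdot F$ combined with the intertwining identity, solving directly for $\mathcal{D}_k^{\shortparallel}F=\partial_p\mathcal{W}_k^{\perp}F-\mathcal{R}((\omega_k\cdot x)\Phi)$. Since $F$ itself is Schwartz, every transform you manipulate exists trivially, so --- as you correctly observe --- neither Theorem \ref{T:my_Helm} nor the vanishing propositions are needed for this theorem. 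What your argument buys is economy and self-containedness; what the paper's longer route buys is structural insight: it makes explicit that $\mathcal{D}^{\perp}F$ determines exactly $\mathfrak{R}F^{\mathrm{p}}$ while the weighted data recover $\mathfrak{R}F^{\mathrm{s}}$ (i.e., it exhibits the null space responsible for the non-invertibility from $\mathcal{D}^{\perp}F$ alone), and that structure is reused in the proof of Theorem \ref{T:long}. Your verification of the intertwining identity by the Fourier slice theorem, in place of the paper's use of Helgason's differentiation formula $\mathcal{R}[\partial h/\partial x_k]=(e_k\cdot\omega)\partial_p\mathcal{R}h$, is a minor and equally standard variation.
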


In order to formulate the next theorem, let us denote by $\Psi$ the
componentwise Laplacian $\Psi$ of the solenoidal part of the field
$F^{\mathrm{s}}$:%
\[
\Psi(x)\equiv(\Psi_{1}(x),\Psi_{2}(x),...,\Psi_{{ d} }(x)),\quad\Psi
_{j}(x)=\Delta F_{j}^{s}(x),\quad x\in\mathbb{R}^{d},\quad j=1,..,d.
\]

\begin{theorem}
If an infinitely differentiable vector field $F(x)=(F_{1}(x),...,F_{d}(x))$
satisfies decay conditions (\ref{E:decay}), the componentwise Laplacian $\Psi$
of its solenoidal part $F^{\mathrm{s}}$ and the Radon transform of $\Psi$
\ can be reconstructed from longitudinal transforms $\mathcal{D}_j%
^{\shortparallel}F$, $j=1,...,d-1,$ using the following
formulas\label{T:long}:%
\begin{align}
\lbrack\mathfrak{R}\Psi](\omega,p)  &  =\frac{\partial^{2}}{\partial p^{2}%
}\sum\limits_{j=1}^{d-1}\omega_{j}[{\mathcal{D}_j}^{\shortparallel
}F](\omega,p),\nonumber\\
\Psi_{j}(x)  &  =\left[  \mathcal{R}^{-1}\left(  e_{j}\cdot\mathfrak{R}%
\Psi\right)  \right]  (x),\qquad x\in\mathbb{R}^{d},\qquad j=1,2,...,d.
\label{E:invert_for_sol}%
\end{align}
Further, the divergence $\Phi$ of the field can be reconstructed from the
linearly weighted longitudinal transform $\mathcal{W}_{1}^{\shortparallel}F$
and previously found $\Psi$ as follows:%
\begin{equation}
\Phi(x)=\mathcal{R}^{-1}\{\mathcal{R(}(x\cdot\omega_{1})\omega_{1}\cdot
\Psi(x))-\frac{\partial^{2}}{\partial p^{2}}\mathcal{W}_{1}^{\shortparallel
}F\},\qquad x\in\mathbb{R}^{d}, \label{E:invert_for_pot}%
\end{equation}
where $\mathcal{R}^{-1}$ is understood as the filtration/backprojection
formula (\ref{E:FBP}). Finally, filed $F$ is reconstructed from $\Phi$ and
$\Psi$ by convolving these functions with $G$ and its gradient:%
\begin{equation}
F(x)=(\Phi\ast\nabla G)(x)+ { \sum_{j=1}^{d} e_j }(\Psi_{j}\ast G)(x), \quad
x\in\mathbb{R}^{d}. \label{E:convolve_sol}%
\end{equation}

\end{theorem}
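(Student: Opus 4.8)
The plan is to combine the Helmholtz splitting with two elementary intertwining relations for the scalar Radon transform: $\partial_p\mathcal{R}g=\mathcal{R}(\omega\cdot\nabla g)$ (obtained by applying the divergence theorem to the slab between two parallel hyperplanes) and its iterate $\partial_p^2\mathcal{R}g=\mathcal{R}((\omega\cdot\nabla)^2g)=\mathcal{R}(\Delta g)$, where the last equality holds because $\Delta-(\omega\cdot\nabla)^2=\sum_{k=1}^{d-1}(\omega_k\cdot\nabla)^2$ is a sum of tangential second derivatives whose integral over any hyperplane $\Pi(\omega,p)$ vanishes. These manipulations apply to $F^{\mathrm p}$ and $F^{\mathrm s}$ precisely because Theorem~\ref{T:my_Helm} supplies $|F^{\mathrm p,s}|=\mathcal{O}(|x|^{-d})$, $|\partial F^{\mathrm p,s}|=\mathcal{O}(|x|^{-d-1})$, $|\partial^2 F^{\mathrm p,s}|=\mathcal{O}(|x|^{-d-2})$, which forces every boundary term at infinity to disappear. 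I will also record three facts used throughout: (i) since $\Delta\varphi=\Phi$ by (\ref{E:potential_conv}) and $\Delta G=\delta$, the field $F^{\mathrm s}$ is divergence free, so by the same slab argument $\partial_p\mathcal{D}^{\perp}F^{\mathrm s}=\mathcal{R}(\operatorname{div}F^{\mathrm s})=0$, and together with decay as $p\to\pm\infty$ this gives $\mathcal{D}^{\perp}F^{\mathrm s}\equiv0$; (ii) $\mathcal{D}_k^{\shortparallel}F^{\mathrm p}=\mathcal{R}(\omega_k\cdot\nabla\varphi)=0$ because $\omega_k\cdot\nabla$ is tangential and $\varphi\to0$; and (iii) $\Psi_j=\Delta F_j^{\mathrm s}=\Delta F_j-\partial_j\Phi$ lies in $\mathcal{S}(\mathbb{R}^d)$, which legitimizes every use of the filtered backprojection (\ref{E:FBP}).

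Given (i) and (ii), the first assertion is nearly immediate. By (\ref{E:vectorRadon}) and (ii), $\mathcal{D}_j^{\shortparallel}F=\omega_j\cdot\mathfrak{R}F=\omega_j\cdot\mathfrak{R}F^{\mathrm s}$. Using completeness of the orthonormal basis $(\omega,\omega_1,\dots,\omega_{d-1})$, which yields $\sum_{j=1}^{d-1}\omega_j(\omega_j\cdot v)=v-\omega(\omega\cdot v)$ for every vector $v$, together with (i), I obtain $\sum_{j=1}^{d-1}\omega_j\mathcal{D}_j^{\shortparallel}F=\mathfrak{R}F^{\mathrm s}-\omega\,\mathcal{D}^{\perp}F^{\mathrm s}=\mathfrak{R}F^{\mathrm s}$. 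Applying $\partial_p^2$ componentwise and invoking $\partial_p^2\mathcal{R}F_m^{\mathrm s}=\mathcal{R}(\Delta F_m^{\mathrm s})=\mathcal{R}\Psi_m$ gives the first line of (\ref{E:invert_for_sol}); the second line is componentwise inversion by (\ref{E:FBP}), valid by (iii).

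For the divergence I start from $\mathcal{W}_1^{\shortparallel}F=\mathcal{R}(w)$ with $w=(\omega_1\cdot x)(\omega_1\cdot F)$, which exists since $F\in\mathcal{S}$. The crucial subtlety is that the separate pieces $\mathcal{W}_1^{\shortparallel}F^{\mathrm p}$ and $\mathcal{W}_1^{\shortparallel}F^{\mathrm s}$ do not exist, their integrands decaying only like $\mathcal{O}(|x|^{1-d})$, so the splitting must wait until after differentiation. Because $\omega\cdot\nabla(\omega_1\cdot x)=\omega\cdot\omega_1=0$, the linear weight commutes with $(\omega\cdot\nabla)^2$, giving $\partial_p^2\mathcal{W}_1^{\shortparallel}F=\mathcal{R}\big((\omega_1\cdot x)\,(\omega\cdot\nabla)^2(\omega_1\cdot F)\big)$; now each split integrand $(\omega_1\cdot x)\,\omega_1\cdot(\omega\cdot\nabla)^2F^{\mathrm p,s}$ decays like $\mathcal{O}(|x|^{-d-1})$, so I may treat the two parts separately. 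Writing $h=\omega_1\cdot F^{\mathrm p,s}$, a double tangential integration by parts gives $\mathcal{R}((\omega_1\cdot x)(\omega_k\cdot\nabla)^2h)=-\delta_{k1}\int_{\Pi}\partial_{\omega_1}h\,dA=0$, so $(\omega\cdot\nabla)^2$ can be replaced by $\Delta$ under the weighted transform. For the solenoidal part $\Delta(\omega_1\cdot F^{\mathrm s})=\omega_1\cdot\Psi$, yielding $\mathcal{R}((\omega_1\cdot x)\,\omega_1\cdot\Psi)$; for the potential part $\Delta(\omega_1\cdot\nabla\varphi)=\omega_1\cdot\nabla\Delta\varphi=\omega_1\cdot\nabla\Phi$, and a single tangential integration by parts collapses $\mathcal{R}((\omega_1\cdot x)\,\omega_1\cdot\nabla\Phi)$ to $-\mathcal{R}\Phi$. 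Summing the contributions produces $\partial_p^2\mathcal{W}_1^{\shortparallel}F=\mathcal{R}((\omega_1\cdot x)\,\omega_1\cdot\Psi)-\mathcal{R}\Phi$, which is exactly (\ref{E:invert_for_pot}) before inverting, and (\ref{E:FBP}) then recovers $\Phi\in\mathcal{S}$. This step is the main obstacle: one must track which integrals converge at each stage and verify that every boundary term at infinity vanishes, and it is here that the sharpened decay rates of Theorem~\ref{T:my_Helm} are indispensable, the coarser bound (\ref{E:sharafut}) being insufficient.

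Finally, (\ref{E:convolve_sol}) follows from the construction of the Helmholtz parts. The potential part is $F^{\mathrm p}=\nabla\varphi=\nabla(\Phi\ast G)=\Phi\ast\nabla G$, the first summand. For the solenoidal part, each component $F_j^{\mathrm s}$ solves the Poisson equation $\Delta u=\Psi_j$ with $u\to0$ at infinity, while $\Psi_j\ast G$ solves the same equation (since $\Delta G=\delta$) and also decays; their difference is harmonic and vanishes at infinity, hence is identically zero by Liouville's theorem, so $F_j^{\mathrm s}=\Psi_j\ast G$. (In the case $d=2$ one first checks $\int_{\mathbb{R}^2}\Psi_j\,dx=0$, which holds by the flux form of the integral and the decay of $\partial F^{\mathrm s}$, so that $\Psi_j\ast G$ remains bounded.) Summing over $j$ and adding the potential part gives (\ref{E:convolve_sol}), completing the reconstruction.
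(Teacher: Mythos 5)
Your proof is correct and follows essentially the same route as the paper: both rely on the vanishing of longitudinal transforms of potential fields and of the transversal transform of the solenoidal part, expand $\mathfrak{R}F$ in the basis $(\omega,\omega_1,\dots,\omega_{d-1})$, differentiate twice in $p$ to convert Radon data into transforms of Laplacians (splitting into Helmholtz parts only \emph{after} differentiation, when the decay rates of Theorem~\ref{T:my_Helm} make the separate integrals converge), and use tangential integration by parts over hyperplanes to produce the $-\mathcal{R}\Phi$ term, arriving at exactly (\ref{E:another_nice_formula}). The only differences are organizational: you commute the weight through $(\omega\cdot\nabla)^2$ in one step where the paper differentiates componentwise and sums over $k$, and your Liouville-theorem justification of $F_j^{\mathrm{s}}=\Psi_j\ast G$ (including the $d=2$ zero-mean check) is somewhat more detailed than the paper's.
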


We provide the proofs of theorems (\ref{T:trans}) and (\ref{T:long}) in Sections~\ref{S:trans}
and~\ref{S:long}, respectively. The proof of theorem
(\ref{T:my_Helm}) can be found in the Appendix.\

\section{Properties of the transversal transforms and \label{S:trans}proof of
Theorem \ref{T:trans}}

\subsection{Reconstructing the potential part of the
field\label{S:rec_pot_part}}

Most of the material reviewed in the present section \ref{S:rec_pot_part} \ is
known. However, to make the presentation self-contained, we provide elementary proofs.

\begin{proposition}
Suppose $F^{\mathrm{s}}(x)$ is a differentiable solenoidal vector field
decreasing at infinity at the rate $F^{\mathrm{s}}(x)=\mathcal{O}\left(
|x|^{-d}\right)  .$ Then the transversal Radon transform $\mathcal{D}^{\perp
}F^{\mathrm{s}}$ of $F^{\mathrm{s}}$ vanishes\label{T:prop_solenoid}:%
\begin{equation}
\lbrack\mathcal{D}^{\perp}F^{\mathrm{s}}](\omega,p)=\omega\cdot\left[
\mathfrak{R}F^{\mathrm{s}}\right]  (\omega,p)=0,\qquad(\omega,p)\in
\mathbb{S}^{d-1}\times\mathbb{R}. \label{E:solenoid_zero}%
\end{equation}

\begin{proof}
Fix an arbitrary pair $(\omega,p)\in\mathbb{S}^{d-1}\times\mathbb{R}$ and the
corresponding hyperplane $\Pi(\omega,p).$ Consider a sphere $S(0,R)$ of radius
$R$ centered at the origin. Further, consider the region $\Upsilon(R,p)$
bounded by a part of $S(0,R)$ and $\Pi(\omega,p),$ and such that the interior
normal to the boundary of $\Upsilon(R,p)$ on $\Pi(\omega,p)$ coincides with
$\omega$.

{ Let us denote by $\partial\Upsilon_{1}(R,p)$ the spherical part of the boundary
$\Upsilon(R,p)$, i.e.
 $\partial\Upsilon_{1}(R,p)\equiv\partial\Upsilon(R,p)\cap S(0,R)$}.
 Since
$\operatorname{div}F^{\mathrm{s}}=0,$ the following integrals are equal%
\[
\int\limits_{B(0,R)\cap\Pi(\omega,p)}F^{\mathrm{s}}(x)\cdot\omega\,dA_{\Pi
}(x)=\int\limits_{\partial\Upsilon_{1}(R,p)}F^{\mathrm{s}}(x)\cdot
n(x)\,dS(x)
\]
where $dS(x)$ is the standard area element on $S(0,R)$ and $n(x)$ is the
exterior normal to the sphere. Now, let us take the limit $R\rightarrow
\infty.$ Due to the fast decrease of $F^{\mathrm{s}}(x)$ at infinity, the
right hand side in the above equation converges to 0. The left hand side
converges to $\int\nolimits_{\Pi(\omega,p)}F(x)\cdot\omega\,dA_{\Pi}(x),$
proving that this integral is equal to 0. Since this is true for arbitrary
$(\omega,p),$ equation (\ref{E:solenoid_zero}) follows.
\end{proof}
\end{proposition}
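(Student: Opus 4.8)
The plan is to exploit the divergence-free condition through the divergence theorem on a family of truncated regions, and then pass to the limit. First I would fix an arbitrary pair $(\omega,p)\in\mathbb{S}^{d-1}\times\mathbb{R}$ and work with the single hyperplane $\Pi(\omega,p)$. Since $\omega\cdot F^{\mathrm{s}}$ is precisely the flux density of $F^{\mathrm{s}}$ across $\Pi(\omega,p)$, the quantity $[\mathcal{D}^{\perp}F^{\mathrm{s}}](\omega,p)$ is the total flux of $F^{\mathrm{s}}$ through that hyperplane. The guiding idea is that a divergence-free field can have no net flux through a plane splitting $\mathbb{R}^{d}$, provided no flux ``escapes'' at infinity.

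To make this rigorous I would introduce a compact region $\Upsilon(R,p)$ lying on one side of $\Pi(\omega,p)$ and capped by a portion of the sphere $S(0,R)$, oriented so that the outward normal along the flat face is $-\omega$. Applying the divergence theorem to $F^{\mathrm{s}}$ over $\Upsilon(R,p)$ and invoking $\operatorname{div}F^{\mathrm{s}}=0$ yields a single identity: the flux of $F^{\mathrm{s}}$ through the flat disk $B(0,R)\cap\Pi(\omega,p)$ equals the flux through the spherical cap $\partial\Upsilon_{1}(R,p)$.

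The crux is then the limit $R\to\infty$, and this is where the decay hypothesis $F^{\mathrm{s}}(x)=\mathcal{O}(|x|^{-d})$ does the work. On the spherical cap the surface measure grows like $R^{d-1}$ while the integrand is bounded by a constant times $R^{-d}$, so the cap flux is $\mathcal{O}(R^{-1})$ and vanishes in the limit. On the flat face, the same pointwise decay renders the integrand integrable over the $(d-1)$-dimensional hyperplane, since the tail integral $\int^{\infty} r^{-d}\,r^{d-2}\,dr$ converges; hence the disk flux converges to the full integral $[\mathcal{D}^{\perp}F^{\mathrm{s}}](\omega,p)$. Equating the two limits forces $[\mathcal{D}^{\perp}F^{\mathrm{s}}](\omega,p)=0$, and since $(\omega,p)$ was arbitrary the claimed identity holds throughout $\mathbb{S}^{d-1}\times\mathbb{R}$.

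The main obstacle I anticipate is purely the limiting argument rather than any conceptual difficulty: one must confirm both that the spherical-cap contribution genuinely decays (which hinges only on the cap area being $\mathcal{O}(R^{d-1})$, not on any finer geometry of the cap) and that the flat integral is absolutely convergent, so that truncation to $B(0,R)\cap\Pi(\omega,p)$ is legitimate and the limit recovers the full transform. Both facts follow directly from the $\mathcal{O}(|x|^{-d})$ bound, so no additional regularity or compact-support assumptions on $F^{\mathrm{s}}$ are required.
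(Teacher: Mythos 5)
Your proposal is correct and takes essentially the same route as the paper: apply the divergence theorem to the region bounded by the hyperplane and a spherical cap, let $R\to\infty$, and conclude that the vanishing cap flux forces the planar flux to vanish. Your quantitative justifications of the two limits (cap flux $\mathcal{O}(R^{d-1})\cdot\mathcal{O}(R^{-d})=\mathcal{O}(R^{-1})$, and absolute convergence of the hyperplane integral from $\int^{\infty}r^{-d}r^{d-2}\,dr<\infty$) simply spell out the steps the paper states qualitatively.
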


The corollary below follows immediately from Proposition \ref{T:prop_solenoid}
and Theorem \ref{T:my_Helm}.

\begin{corollary}
Suppose $F(x)$ is a $C^{\infty}$ vector field defined on $\mathbb{R}^{d}$ and
decaying at infinity at rates given by equation (\ref{E:decay}), and
$F^{\mathrm{p}}+F^{\mathrm{s}}$ are defined by equations
(\ref{E:potential_conv})-(\ref{E:def_sol_part}). Then%
\begin{equation}
\mathcal{D}^{\perp}F=\mathcal{D}^{\perp}(F^{\mathrm{p}}+F^{\mathrm{s}%
})=\mathcal{D}^{\perp}F^{\mathrm{p}}. \label{E:prop_potential}%
\end{equation}

\end{corollary}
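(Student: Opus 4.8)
The plan is to obtain the identity by combining the additive splitting of the transversal transform with the vanishing result for solenoidal fields. First I would invoke Theorem~\ref{T:my_Helm}, which guarantees that both $F^{\mathrm{p}}$ and $F^{\mathrm{s}}$ decay at the rate $\mathcal{O}(|x|^{-d})$. This decay is precisely what is needed for the integrals defining $\mathcal{D}^{\perp}F^{\mathrm{p}}$ and $\mathcal{D}^{\perp}F^{\mathrm{s}}$ to converge over each hyperplane. Since $F=F^{\mathrm{p}}+F^{\mathrm{s}}$ pointwise and each transform exists separately, linearity of the integral over $\Pi(\omega,p)$ yields the middle equality in (\ref{E:prop_potential}), namely $\mathcal{D}^{\perp}F=\mathcal{D}^{\perp}F^{\mathrm{p}}+\mathcal{D}^{\perp}F^{\mathrm{s}}$; this is the transversal component of the splitting already recorded in~(\ref{E:split}).

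Next I would verify that $F^{\mathrm{s}}$ is genuinely solenoidal, so that Proposition~\ref{T:prop_solenoid} applies. Starting from $F^{\mathrm{p}}=\nabla\varphi$ with $\varphi=\Phi\ast G$ and $\Phi=\operatorname{div}F$, I would compute $\operatorname{div}F^{\mathrm{p}}=\Delta\varphi=\Phi\ast\Delta G$. Because $G$ is the fundamental solution of the Laplacian, $\Delta G=\delta$, whence $\operatorname{div}F^{\mathrm{p}}=\Phi$. Therefore $\operatorname{div}F^{\mathrm{s}}=\operatorname{div}F-\operatorname{div}F^{\mathrm{p}}=\Phi-\Phi=0$. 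The field $F^{\mathrm{s}}$ is also differentiable, since $F\in\mathcal{S}(\mathbb{R}^{d})^{d}$ and the potential part $\nabla\varphi$ inherits smoothness from the elliptic regularity of the convolution with $G$.

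With these hypotheses checked, Proposition~\ref{T:prop_solenoid} gives $\mathcal{D}^{\perp}F^{\mathrm{s}}=0$, and substituting this into the additive splitting produces the final equality $\mathcal{D}^{\perp}F=\mathcal{D}^{\perp}F^{\mathrm{p}}$. I do not expect any serious obstacle here: the only quantitative input, the $\mathcal{O}(|x|^{-d})$ decay that both licenses the splitting and supplies the decay hypothesis of Proposition~\ref{T:prop_solenoid}, has already been established in Theorem~\ref{T:my_Helm}. The remaining content is the purely algebraic verification that $F^{\mathrm{s}}$ is divergence-free, which rests only on the defining property $\Delta G=\delta$ of the fundamental solution.
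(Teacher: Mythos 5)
Your proof is correct and follows essentially the same route as the paper, which derives the corollary directly from the decay estimates of Theorem~\ref{T:my_Helm} (guaranteeing the splitting $\mathcal{D}^{\perp}F=\mathcal{D}^{\perp}F^{\mathrm{p}}+\mathcal{D}^{\perp}F^{\mathrm{s}}$) together with Proposition~\ref{T:prop_solenoid}. Your explicit verification that $\operatorname{div}F^{\mathrm{s}}=0$ via $\Delta G=\delta$ fills in a hypothesis the paper leaves implicit, but it does not change the argument.
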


Suppose $h$ is a Radon integrable
function with a Radon integrable derivative $\frac{\partial}{\partial
x_{k}}h$. Then the following relation holds~\cite{helgason2013Radon}:
\[
\mathcal{R}\left[  \frac{\partial}{\partial x_{k}}h\right]  \mathcal{=}%
(e_{k}\cdot\omega)\frac{\partial}{\partial p}\mathcal{R}h.
\]
This leads to the following Lemma.

\begin{lemma}
\label{T:Radon_of_div}Suppose vector field $H(x)=(H_{1}(x),...,H_{d}(x))$ is
differentiable and decays at infinity at the rate $\left|H(x)\right|=\mathcal{O}\left(
|x|^{-d}\right)  $ or faster, with $\frac{\partial H_{k}}{\partial x_{k}%
}=\mathcal{O}\left(  |x|^{-d}\right)$,  $k=1,...,d$. Then%
\[
\left[  \mathcal{R(}\operatorname{div}H)\right]  (\omega,p)=\frac{\partial
}{\partial p}\left[  \mathcal{D}^{\perp}H\right]  (\omega,p),\qquad
(\omega,p)\in\mathbb{S}^{d-1}\times\mathbb{R}.
\]

\begin{proof}
The divergence $\operatorname{div}H(x)$ has the rate of decay $\mathcal{O}%
\left(  |x|^{-d}\right)  $, justifying the following:
\begin{align*}
\mathcal{R(}\operatorname{div}H)  &  =\sum\limits_{k=1}^{d}\mathcal{R}\left(
\frac{\partial H_{k}}{\partial x_{k}}\right)  =\sum\limits_{k=1}^{d}%
(e_{k}\cdot\omega)\frac{\partial}{\partial p}\mathcal{R}H_{k}=\frac{\partial
}{\partial p}\mathcal{R}\sum\limits_{k=1}^{d}(e_{k}\cdot\omega)H_{k}\\
&  =\frac{\partial}{\partial p}\mathcal{R}(\omega\cdot H)=\frac{\partial
}{\partial p}\left(  \omega\cdot\mathfrak{R}(H)\right)  =\frac{\partial
}{\partial p}\mathcal{D}^{\perp}H,
\end{align*}
where equation (\ref{E:vectorRadon}) is used on the second line of equalities.
\end{proof}
\end{lemma}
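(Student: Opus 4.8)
The plan is to reduce the identity to the scalar intertwining relation $\mathcal{R}[\partial h/\partial x_{k}]=(e_{k}\cdot\omega)\,\frac{\partial}{\partial p}\mathcal{R}h$ recalled just above the statement, applied one component at a time, together with the definition \eqref{E:def_transversal} of $\mathcal{D}^{\perp}$ and linearity of $\mathcal{R}$. Before invoking anything, though, I would first check that every transform in sight actually exists, and that the hypotheses of the cited scalar identity are met. Since $\operatorname{div}H=\sum_{k=1}^{d}\partial H_{k}/\partial x_{k}$ and each diagonal derivative is assumed to decay like $\mathcal{O}(|x|^{-d})$, the function $\operatorname{div}H$ decays at the same rate, so $\mathcal{R}(\operatorname{div}H)$ is well defined. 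Likewise each component $H_{k}$ decays like $\mathcal{O}(|x|^{-d})$, so $\mathcal{R}H_{k}$ exists, while $\partial H_{k}/\partial x_{k}$ is Radon integrable by assumption. Thus for each fixed $k$ the pair $(H_{k},\partial H_{k}/\partial x_{k})$ satisfies exactly the integrability requirement under which the intertwining identity was stated; it is worth remarking that only the \emph{diagonal} derivatives are needed, so the stated hypotheses are precisely tailored to the argument.

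With existence secured, the computation itself is short and purely formal. By linearity of $\mathcal{R}$ I would write $\mathcal{R}(\operatorname{div}H)=\sum_{k=1}^{d}\mathcal{R}(\partial H_{k}/\partial x_{k})$, apply the intertwining relation to each summand to get $\sum_{k=1}^{d}(e_{k}\cdot\omega)\,\frac{\partial}{\partial p}\mathcal{R}H_{k}$, then pull the single operator $\frac{\partial}{\partial p}$ outside the sum and fold the sum back into one Radon transform, obtaining $\frac{\partial}{\partial p}\,\mathcal{R}\bigl(\sum_{k=1}^{d}(e_{k}\cdot\omega)H_{k}\bigr)$. The scalar in parentheses is exactly $\omega\cdot H$, since $\omega=\sum_{k}(e_{k}\cdot\omega)e_{k}$ and $H=\sum_{k}H_{k}e_{k}$. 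Finally, the definition \eqref{E:def_transversal} gives $\mathcal{R}(\omega\cdot H)=\mathcal{D}^{\perp}H$, so that $\mathcal{R}(\operatorname{div}H)=\frac{\partial}{\partial p}\mathcal{D}^{\perp}H$, which is the claim. (Alternatively one may identify $\mathcal{R}(\omega\cdot H)$ with $\omega\cdot\mathfrak{R}H$ via \eqref{E:vectorRadon}.)

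I do not expect a genuine obstacle here: the one point that might look delicate, namely interchanging $\frac{\partial}{\partial p}$ with the summation and the integration over hyperplanes (i.e. differentiating the projections under the integral sign), is already packaged inside the cited scalar identity, whose hypotheses I have verified term by term. Consequently all of the analytic content sits in the preliminary decay/integrability check that legitimizes the componentwise application of the relation, after which only linear algebra and the definitions remain.
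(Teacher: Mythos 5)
Your proposal is correct and follows essentially the same route as the paper's proof: linearity of $\mathcal{R}$, componentwise application of the intertwining identity $\mathcal{R}[\partial h/\partial x_{k}]=(e_{k}\cdot\omega)\frac{\partial}{\partial p}\mathcal{R}h$, folding the sum back into $\mathcal{R}(\omega\cdot H)$, and identifying this with $\mathcal{D}^{\perp}H$ via (\ref{E:vectorRadon}). Your additional explicit verification of existence and integrability only makes more careful what the paper states in one sentence about the decay of $\operatorname{div}H$.
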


In particular for a $C^{\infty}$ field $F$ satisfying the rates of decay
(\ref{E:decay}) we obtain%
\begin{equation}
\mathcal{R}\Phi=\frac{\partial}{\partial p}\mathcal{D}^{\perp}F.
\label{E:useful_formula}%
\end{equation}
Since $\Phi$ is a function from the Schwartz class $\mathcal{S}(\mathbb{R}%
^{d})$ it can be reconstructed from projections using the filtered
backprojection formula (\ref{E:FBP}), which yields equation (\ref{E:T_trans_1}%
). The potential part of the field $F^{\mathrm{p}}(x)$ can now be computed by
combining (\ref{E:potential_conv}) and (\ref{E:def_pot_part}):%
\begin{equation}
F^{\mathrm{p}}(x)=\nabla\left(  G\ast\Phi\right)  (x)=(\Phi\ast\nabla G)(x).
\label{E:find_potential_part}%
\end{equation}

\subsection{Reconstructing the whole filed}

Due to Proposition \ref{T:prop_solenoid}, the solenoidal part $F^{\mathrm{s}}$
of the field $F$ lies in the null space of the transversal Radon transform
$\mathcal{D}^{\perp}$, and therefore, cannot be reconstructed from the
knowledge of $\mathcal{D}^{\perp}F.$ Thus, in addition to $\mathcal{D}^{\perp
}F$, in this section we assume the knowledge of the transversal weighted
transforms $\mathcal{W}_{k}^{\perp}F,$ $k=1,...,d-1,$ defined by
(\ref{E:def_Wk}). This information will allow us to reconstruct the whole
field $F$ and thus to complete the proof of theorem \ref{T:trans}

First, for the future use we would like to find projections of $\mathfrak{R}F$ on the
vectors of the basis $\mathfrak{B.}$ By combining equations
(\ref{E:vectorRadon}) and (\ref{E:prop_potential}) one observes:%
\begin{equation}
\omega\cdot\left[  \mathfrak{R}F\right]  (\omega,p)=\left[  \mathcal{D}%
^{\perp}F^{\mathrm{p}}\right]  (\omega,p),\qquad(\omega,p)\in\mathbb{S}%
^{d-1}\times\mathbb{R}. \label{E:proj_to_omega}%
\end{equation}
Let us find projections of $\mathfrak{R}F$ on vectors $\omega_{1}%
,...,\omega_{d-1}$ of the basis $\mathfrak{B.}$ Note that, due to
(\ref{E:def_longitudinal})
\[
\omega_{k}\cdot\left[  \mathfrak{R}F\right]  =\mathcal{D}_{k}^{\shortparallel
}F,,\qquad k=1,..,d-1.
\]
We start with $\omega_{1}\cdot\mathfrak{R}F^{\mathrm{p}}:$%

\begin{align*}
\omega_{1}\cdot\lbrack\mathfrak{R}F^{\mathrm{p}}](\omega,p)  &  =[\mathcal{D}%
_{1}^{\shortparallel}F^{\mathrm{p}}](\omega,p)=\int\limits_{\Pi(\omega
,p)}\omega_{1}\cdot F^{\mathrm{p}}(x)\,dA_{\Pi}(x)\\
&  =\int\limits_{\mathbb{R}}...\int\limits_{\mathbb{R}}\left[
\int\limits_{\mathbb{R}}\omega_{1}\cdot F^{\mathrm{p}}(p\omega+y_{1}%
\omega_{1}+...+y_{d-1}\omega_{d-1})\,dy_{1}\right]  \,dy_{2}...\,dy_{d-1}\\
&  =\int\limits_{\mathbb{R}}...\int\limits_{\mathbb{R}}\left[
\int\limits_{\mathbb{R}}\frac{\partial}{\partial y_{1}}\varphi
(p\omega+y_{1}\omega_{1}+...+y_{d-1}\omega_{d-1})\,dy_{1}\right]
\,dy_{2}...\,dy_{d-1}\\
&  =\int\limits_{\mathbb{R}}...\int\limits_{\mathbb{R}}\left[
 \lim_{\substack{a\rightarrow+\infty\\b\rightarrow
-\infty}}\left.  \varphi(p\omega+y_{1}\omega_{1}+...+y_{d-1}\omega
_{d-1})\right\vert _{y_{1}=b}^{y_{1}=a}\right]  \,dy_{2}\,...\,dy_{d-1}=0,
\end{align*}
for any $(\omega,p)\in\mathbb{S}^{d-1}\times\mathbb{R}.$ Since the numbering
of vectors $\omega_{1},...,\omega_{d-1}$ in the basis $\mathfrak{B}$ is
arbitrary, we conclude that%
\begin{equation}
\omega_{k}\cdot\lbrack\mathfrak{R}F^{\mathrm{p}}](\omega,p)=[\mathcal{D}%
_{k}^{\shortparallel}F^{\mathrm{p}}](\omega,p)=0,\qquad k=1,...,d-1,\qquad
(\omega,p)\in\mathbb{S}^{d-1}\times\mathbb{R}. \label{E:orthogonality}%
\end{equation}
In other words, a longitudinal transform of a potential field vanishes. Since
basis $\mathfrak{B}$ is orthonormal, by combining (\ref{E:orthogonality}) with
(\ref{E:proj_to_omega}) one obtains the following formula:
\begin{equation}
\mathfrak{R}F^{\mathrm{p}}=\omega\left\langle \omega\cdot\mathfrak{R}%
F^{\mathrm{p}}\right\rangle =\omega\mathcal{D}^{\perp}F^{\mathrm{p}}%
=\omega\mathcal{D}^{\perp}F. \label{E:pot_part}%
\end{equation}
Thus, the componentwise Radon transform of the potential part $F^{\mathrm{p}}$
of the field $F$ can be easily recovered from the transversal transform
$\omega\mathcal{D}^{\perp}F$.

Let us find what information can be extracted from the weighted transversal
transforms $\mathcal{W}_{k}^{\perp}F.$ It follows from the definition
(\ref{E:def_Wk}) that $\mathcal{W}_{k}^{\perp}F=\mathcal{D}^{\perp}H_{ (k)}$
where field $H_{ (k)}(x)$ is defined as $(\omega_{k}\cdot x)F(x)$, { $k=1,2,...d-1$}. Due to the
fast decay of $F(x)$ (see (\ref{E:decay})), fields $H_{ (k)}(x)$ satisfy
conditions of Lemma~\ref{T:Radon_of_div}. Therefore
\begin{align*}
\frac{\partial}{\partial p}\mathcal{W}_{k}^{\perp}F  &  =\frac{\partial
}{\partial p}\mathcal{D}^{\perp}(H_{ (k)})=\mathcal{R(}\operatorname{div}%
H_{ (k)})=\mathcal{R(}\omega_{k}\cdot F^{\mathrm{p}}+\omega_{k}\cdot
F^{\mathrm{s}}+(\omega_{k}\cdot x)\Phi(x))\\
&  =\omega_{k}\cdot\mathfrak{R}F^{\mathrm{p}}+\omega_{k}\cdot\mathfrak{R}%
F^{\mathrm{s}}+\mathcal{R}\{(\omega_{k}\cdot x)\Phi(x)\}.
\end{align*}
Due to (\ref{E:orthogonality}) term $\omega_{k}\cdot\mathfrak{R}F^{\mathrm{p}%
}$ vanishes, and one obtains%
\[
\omega_{k}\cdot\mathfrak{R}F^{\mathrm{s}}=\frac{\partial}{\partial
p}\mathcal{W}_{k}^{\perp}F-\mathcal{R}\{(\omega_{k}\cdot x)\Phi(x)\},\quad
k=1,...,d-1.
\]
These equations combined with (\ref{E:solenoid_zero}) determine projections of
vector-valued function $\mathfrak{R}F^{\mathrm{s}}$ onto the vectors of the
orthonormal basis $\mathfrak{B,}$ leading to the following result:%
\[
\mathfrak{R}F^{\mathrm{s}}=\sum_{k=1}^{d-1}\omega_{k}\left[  \frac{\partial
}{\partial p}\mathcal{W}_{k}^{\perp}F-\mathcal{R}\{(\omega_{k}\cdot
x)\Phi(x)\}\right]  .
\]
By combining the latter formula with equation (\ref{E:pot_part}) we arrive at
the formula (\ref{E:T_trans_2}) that gives an explicit expression for
$[\mathfrak{R}F](\omega,p).$ Since field components $F_{j}(x)$ are functions
from the Schwartz space, formula (\ref{E:FBP}) can be used to reconstruct
$F_{j}$'s from components of the vector-valued $[\mathfrak{R}F](\omega,p)$,
thus yielding equation (\ref{E:T_trans_3}). The proof of Theorem \ref{T:trans}
is complete.

\section{Properties of longitudinal transforms and proof of \label{S:long}%
Theorem \ref{T:long}}

In this section we assume that only longitudinal transforms $\mathcal{D}%
_{j}^{\shortparallel}F,\quad j=1,...,d-1,$ and one of the weighted
longitudinal transforms (e.g., $\mathcal{W}_{1}^{\shortparallel}F$) are known. Our goal is
to reconstruct field $F$ from these data.

\subsection{Reconstructing the solenoidal part of the field}

\begin{proposition}
Suppose $F$ is a smooth vector field satisfying the decay conditions
(\ref{E:decay}), and $F^{\mathrm{p}}$, $F^{\mathrm{s}}$ are its potential and
solenoidal parts, respectively. Then longitudinal transforms $\,\mathcal{D}%
_{j}^{\shortparallel}F^{\mathrm{p}}$ of $F^{\mathrm{p}}$ vanish, ${j}=1,...,d-1,$
and the Radon transform of the solenoidal part can be expressed through
$\mathcal{D}_{j}^{\shortparallel}F$ as follows:%
\begin{equation}
\lbrack\mathfrak{R}F^{\mathrm{s}}](\omega,p)=\sum\limits_{j=1}^{d-1}\omega
_{j}\,[\mathcal{D}_{j}^{\shortparallel}F](\omega,p),\qquad(\omega
,p)\in\mathbb{S}^{d-1}\times\mathbb{R}. \label{E:Radon_of_solenoidal}%
\end{equation}

\begin{proof}
Using (\ref{E:vectorRadon}) and (\ref{E:pot_part}) one obtains%
\[
\,\mathcal{D}_{j}^{\shortparallel}F\mathcal{=}\omega_{j}\cdot\mathfrak{R}%
F=\omega_{j}\cdot\mathfrak{R(}F^{\mathrm{p}}+F^{\mathrm{s}})=\omega_{j}%
\cdot\omega\mathcal{D}^{\perp}F^{\mathrm{p}}+\omega_{j}\cdot\mathfrak{R}%
F^{\mathrm{s}}=\omega_{j}\cdot\mathfrak{R}F^{\mathrm{s}}=\,\mathcal{D}%
_{j}^{\shortparallel}F^{\mathrm{s}},
\]
which implies that all longitudinal transforms of the potential part of a
field vanish:%
\[
\,\mathcal{D}_{j}^{\shortparallel}F^{\mathrm{p}}\mathcal{=}0,\qquad
j=1,...,d-1.
\]
Further, by expanding vector $\mathfrak{R}F$ in basis $\mathfrak{B}$ and using
(\ref{E:vectorRadon}) again see that%
\[
\mathfrak{R}F=\left(  \omega\cdot\mathfrak{R}F\right)  \omega+\sum
\limits_{j=1}^{d-1}\left(  \omega_{j}\cdot\mathfrak{R}F\right)  \omega
_{j}=\omega\mathcal{D}^{\perp}F+\sum\limits_{j=1}^{d-1}\omega_{j}%
\,\mathcal{D}_{j}^{\shortparallel}F.
\]
with
\[
\omega\mathcal{D}^{\perp}F=\omega\mathcal{D}^{\perp}F^{\mathrm{P}%
}=\mathfrak{R}F^{\mathrm{p}}\text{\qquad and\qquad}\omega_{j}\,\mathcal{D}%
_{j}^{\shortparallel}F=\omega_{j}\,\mathcal{D}_{j}^{\shortparallel
}F^{\mathrm{s}},\quad j=1,...,d-1,
\]
so that (\ref{E:Radon_of_solenoidal}) holds.
\end{proof}
\end{proposition}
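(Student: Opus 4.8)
The plan is to deduce both assertions from the orthonormal expansion of the vector-valued quantity $\mathfrak{R}F$ in the basis $\mathfrak{B}=(\omega,\omega_{1},\dots,\omega_{d-1})$, combined with two structural facts already at our disposal: that the transversal transform annihilates solenoidal fields (Proposition~\ref{T:prop_solenoid}), and that the componentwise transform of the potential part points along $\omega$ (equation~(\ref{E:pot_part})). Throughout, the existence of every transform in play is guaranteed by Theorem~\ref{T:my_Helm}, which supplies the decay needed for the hyperplane integrals to converge and for the splitting $\mathfrak{R}F=\mathfrak{R}F^{\mathrm{p}}+\mathfrak{R}F^{\mathrm{s}}$ recorded in~(\ref{E:split}) to be legitimate.

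First I would dispose of the potential part. The quickest route is algebraic: equation~(\ref{E:pot_part}) gives $\mathfrak{R}F^{\mathrm{p}}=\omega\,\mathcal{D}^{\perp}F^{\mathrm{p}}$, so $\mathfrak{R}F^{\mathrm{p}}$ is parallel to $\omega$; since each $\omega_{j}$ is orthogonal to $\omega$, dotting with $\omega_{j}$ and invoking the definition~(\ref{E:vectorRadon}) yields $\mathcal{D}_{j}^{\shortparallel}F^{\mathrm{p}}=\omega_{j}\cdot\mathfrak{R}F^{\mathrm{p}}=0$ for $j=1,\dots,d-1$. Alternatively, one may re-derive this directly as in~(\ref{E:orthogonality}): writing points of $\Pi(\omega,p)$ as $p\omega+y_{1}\omega_{1}+\cdots+y_{d-1}\omega_{d-1}$, the integrand $\omega_{j}\cdot F^{\mathrm{p}}=\omega_{j}\cdot\nabla\varphi=\partial\varphi/\partial y_{j}$ is a derivative tangential to the hyperplane, so integrating first in $y_{j}$ leaves boundary values of $\varphi$ that vanish at infinity by~(\ref{E:thm_phi}). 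By the linearity in~(\ref{E:split}) this immediately gives $\mathcal{D}_{j}^{\shortparallel}F=\mathcal{D}_{j}^{\shortparallel}F^{\mathrm{s}}$.

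It then remains to assemble formula~(\ref{E:Radon_of_solenoidal}). I would expand $\mathfrak{R}F^{\mathrm{s}}$ in the orthonormal basis $\mathfrak{B}$ and read off each coefficient from~(\ref{E:vectorRadon}). The coefficient along $\omega$ is $\omega\cdot\mathfrak{R}F^{\mathrm{s}}=\mathcal{D}^{\perp}F^{\mathrm{s}}$, which vanishes by Proposition~\ref{T:prop_solenoid}; the coefficient along $\omega_{j}$ is $\omega_{j}\cdot\mathfrak{R}F^{\mathrm{s}}=\mathcal{D}_{j}^{\shortparallel}F^{\mathrm{s}}=\mathcal{D}_{j}^{\shortparallel}F$ by the previous step. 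Collecting the coefficients produces exactly $\mathfrak{R}F^{\mathrm{s}}=\sum_{j=1}^{d-1}\omega_{j}\,\mathcal{D}_{j}^{\shortparallel}F$, as desired.

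I do not expect a genuine obstacle here: once Theorem~\ref{T:my_Helm} and Proposition~\ref{T:prop_solenoid} are in hand, the argument is pure orthogonal bookkeeping, and the real difficulty lay in those supporting results. The one point deserving care is to confirm that the componentwise splitting and each longitudinal transform actually converge \emph{before} manipulating them — precisely what the decay estimates~(\ref{E:thm_fields}) secure — so that linearity and the basis expansion are applied to honestly defined quantities rather than to formally divergent integrals.
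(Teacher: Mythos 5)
Your proposal is correct and follows essentially the same route as the paper: both arguments reduce to expanding the componentwise Radon transform in the orthonormal basis $\mathfrak{B}$, killing the potential part's longitudinal components via (\ref{E:pot_part})/(\ref{E:orthogonality}) and the solenoidal part's transversal component via Proposition~\ref{T:prop_solenoid}, with Theorem~\ref{T:my_Helm} securing convergence of the split (\ref{E:split}). The only (cosmetic) difference is that you expand $\mathfrak{R}F^{\mathrm{s}}$ directly and invoke Proposition~\ref{T:prop_solenoid} to annihilate its $\omega$-coefficient, whereas the paper expands $\mathfrak{R}F$ and subtracts $\mathfrak{R}F^{\mathrm{p}}=\omega\,\mathcal{D}^{\perp}F$.
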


Equation (\ref{E:Radon_of_solenoidal}) shows that the longitudinal transforms
$\,\mathcal{D}_{j}^{\shortparallel}F,$ $j=1,2,...,d-1$, contain enough
information to obtain the componentwise Radon transform of the solenoidal part
of the field. However, a straightforward componentwise application of the
inversion formula (\ref{E:FBP})\ is not justified in general, since components
of the field $F^{\mathrm{p}}$ are not in the Schwartz space. It is known that
formula~(\ref{E:FBP}) remains valid for slower decaying functions (see Chapter
1 of \cite{helgason2013Radon}). However, reconstruction of functions decaying
at the rate (\ref{E:thm_fields}) still, in general, cannot be guaranteed.
While we conjecture that inversion formula (\ref{E:FBP}) can be used for
componentwise inversion of \ (\ref{E:Radon_of_solenoidal}), we will not prove
this statement here. Instead, we notice that by computing the second derivative of
equation (\ref{E:Radon_of_solenoidal}) in $p$ one obtains the Radon transform
of the componentwise Laplacian $\Psi$ of $F^{\mathrm{s}}$:%
\begin{equation}
\mathfrak{R}\left(  \Psi\right)  =\frac{\partial^{2}}{\partial p^{2}%
}\mathfrak{R}F^{\mathrm{s}}=\frac{\partial^{2}}{\partial p^{2}}\sum
\limits_{j=1}^{d-1}\omega_{j}\,\mathcal{D}_{j}^{\shortparallel}F.
\label{E:Rad_of_Lapl_sol}%
\end{equation}
Let us find out the rate of decay of components of $\Psi$ at infinity.
Using (\ref{E:def_sol_part}) one obtains%
\begin{align*}
\Psi_{k}(x)  &  =\Delta F_{k}^{\mathrm{s}}(x)=\Delta F_{k}(x)-\Delta
F_{k}^{\mathrm{p}}(x)=\Delta F_{k}(x)-\Delta\frac{\partial}{\partial x_{k}%
}\varphi(x)=\Delta F_{k}(x)-\frac{\partial}{\partial x_{k}}\Phi(x)\\
&  =\Delta F_{k}(x)-\frac{\partial}{\partial x_{k}}\operatorname{div}%
F(x),\quad k=1,...,d.
\end{align*}
Since each component of field $F$ belongs to the Schwartz space, so does
$\Psi_{k}(x),$ $k=1,...,d.$ Therefore, equation (\ref{E:Rad_of_Lapl_sol}) can
be inverted componentwise using formula (\ref{E:FBP}), thus proving formula
(\ref{E:invert_for_sol}).

Knowing $\Psi$, the solenoidal part $F^{\mathrm{s}}$ of the field can be recovered as the
following convolution:%
\[
F^{\mathrm{s}}=G\ast\Psi.
\]

\subsection{Reconstructing the whole field}

In this section we will show that, assuming that $\Psi$ is known (for example,
reconstructed using formula (\ref{E:invert_for_sol})), the divergence $\Phi$
of the field can be reconstructed from the weighted longitudinal transform
$\mathcal{W}_{1}^{\shortparallel}\left(  F\right)  $ using formula
{ (\ref{E:invert_for_pot})}, and the whole field $F$ can be obtained as
convolutions (\ref{E:convolve_sol}).

As before, we will try to differentiate the weighted transform $\mathcal{W}%
_{1}^{\shortparallel}F.$ More precisely, let us evaluate the following
expression:%
\begin{align}
(e_{k}\cdot\omega)\frac{\partial}{\partial p}\,\mathcal{W}_{1}^{\shortparallel
}F  &  =(e_{k}\cdot\omega)\frac{\partial}{\partial p}\mathcal{R}\left(
(x\cdot\omega_{1})(\omega_{1}\cdot F(x))\right) \nonumber\\
&  =\mathcal{R}\left(  \frac{\partial}{\partial x_{k}}[(x\cdot\omega
_{1})(\omega_{1}\cdot F^{\mathrm{s}}(x))]\right)  +\mathcal{R}\left(
\frac{\partial}{\partial x_{k}}[(x\cdot\omega_{1})(\omega_{1}\cdot
F^{\mathrm{p}}(x))]\right)  \label{E:usingW_1}%
\end{align}
The second term in the right hand side of (\ref{E:usingW_1}) can be
transformed as follows:%
\begin{align}
\mathcal{R}\left(  \frac{\partial}{\partial x_{k}}[(x\cdot\omega_{1}%
)(\omega_{1}\cdot F^{\mathrm{p}}(x))]\right)   &  =\mathcal{R}\left(
\frac{\partial}{\partial x_{k}}[(x\cdot\omega_{1})(\omega_{1}\cdot
\nabla\varphi(x))]\right) \nonumber\\
&  =\mathcal{R}\left(  (e_{k}\cdot\omega_{1})(\omega_{1}\cdot\nabla
\varphi(x))\right)  +\mathcal{R}\left(  (x\cdot\omega_{1})\frac{\partial
}{\partial\omega_{1}}\frac{\partial\varphi(x)}{\partial x_{k}}\right)  .
\label{E:usingW_2}%
\end{align}
The first term in the right hand side of (\ref{E:usingW_2}) can be seen to be
equal to $(e_{k}\cdot\omega_{1})\,\mathcal{D}_{1}^{\shortparallel
}(F^{\mathrm{p}})$; it vanishes as a longitudinal transform of a potential
field. The remaining second term in (\ref{E:usingW_2}) can be simplified
further:%
\begin{align}
\mathcal{R}\left(  (x\cdot\omega_{1})\frac{\partial}{\partial\omega_{1}%
}\left(  \frac{\partial\varphi(x)}{\partial x_{k}}\right)  \right)   &
=\int\limits_{\mathbb{R}}...\int\limits_{\mathbb{R}}\left[
\int\limits_{\mathbb{R}}y_{1}\frac{\partial}{\partial y_{1}}\,\left(
\frac{\partial\varphi}{\partial x_{k}}(p\omega+y_{1}\omega_{1}+...+y_{d-1}%
\omega_{d-1})\,\right)  dy_{1}\right]  \,dy_{2}...\,dy_{d-1}\nonumber\\
&  =-\int\limits_{\mathbb{R}}...\int\limits_{\mathbb{R}}\left[
\int\limits_{\mathbb{R}}\frac{\partial}{\partial x_{k}}\varphi
(p\omega+y_{1}\omega_{1}+...+y_{d-1}\omega_{d-1})\,dy_{1}\right]
\,dy_{2}...\,dy_{d-1}\nonumber\\
&  =-\mathcal{R}\left(  \frac{\partial}{\partial x_{k}}\varphi(x)\right)
=-\mathcal{R}\left(  F_{k}^{\mathrm{p}}\right)  , \label{E:usingW_3}%
\end{align}
where integration by parts was performed with respect to $y_{1}$. By combining
(\ref{E:usingW_1}), (\ref{E:usingW_2}), and\ (\ref{E:usingW_3})\ we thus
obtain%
\begin{equation}
(e_{k}\cdot\omega)\frac{\partial}{\partial p}\,\mathcal{W}_{1}^{\shortparallel
}\left(  F\right)  =\mathcal{R}\left(  \frac{\partial}{\partial x_{k}}%
[(x\cdot\omega_{1})(\omega_{1}\cdot F^{\mathrm{s}}(x))]\right)  -\mathcal{R}%
\left(  F_{k}^{\mathrm{p}}\right)  . \label{E:usingW_4}%
\end{equation}
Now, let us apply the operator $(e_{k}\cdot\omega)\frac{\partial}{\partial p}$
again, this time to equation (\ref{E:usingW_4}):
\[
(e_{k}\cdot\omega)^{2}\frac{\partial^{2}}{\partial p^{2}}\,\mathcal{W}%
_{1}^{\shortparallel}\left(  F\right)  =\mathcal{R}\left(  \frac{\partial^{2}%
}{\partial x_{k}^{2}}[(x\cdot\omega_{1})(\omega_{1}\cdot F^{\mathrm{s}%
}(x))]\right)  -\mathcal{R}\left(  \frac{\partial}{\partial x_{k}}%
F_{k}^{\mathrm{p}}\right)  .
\]
By summing the above formula in $k$ from $1$ to $d$ one obtains
\begin{equation}
\frac{\partial^{2}}{\partial p^{2}}\,\mathcal{W}_{1}^{\shortparallel}\left(
F\right)  =\mathcal{R}\left(  \Delta\lbrack(x\cdot\omega_{1})(\omega_{1}\cdot
F^{\mathrm{s}}(x))]\right)  -\mathcal{R}\left(  \Phi\right)  .
\label{E:usingW_5}%
\end{equation}
Further, we note that
\begin{align*}
\Delta\lbrack(x\cdot\omega_{1})(\omega_{1}\cdot F^{\mathrm{s}}(x))]  &
=2\omega_{1}\cdot\nabla(\omega_{1}\cdot F^{\mathrm{s}}(x))+(x\cdot\omega
_{1})\Delta\lbrack\omega_{1}\cdot F^{\mathrm{s}}(x)]\\
&  =2\omega_{1}\cdot\nabla(\omega_{1}\cdot F^{\mathrm{s}}(x))+(x\cdot
\omega_{1})\omega_{1}\cdot\Psi(x).
\end{align*}
This allows one to simplify the first term in the right hand side of (\ref{E:usingW_5}) as
follows:%
\begin{align}
\mathcal{R}\left(  \Delta\lbrack(x\cdot\omega_{1})(\omega_{1}\cdot
F^{\mathrm{s}}(x)]\right)   &  =2\mathcal{R[}\omega_{1}\cdot\nabla(\omega
_{1}\cdot F^{\mathrm{s}}(x))]+\mathcal{R(}(x\cdot\omega_{1})(\omega_{1}%
\cdot\Psi(x)))\nonumber\\
&  =2\,\mathcal{D}_{1}^{\shortparallel}(\nabla(\omega_{1}\cdot F^{\mathrm{s}%
}(x))+\mathcal{R}\left(  (x\cdot\omega_{1})(\omega_{1}\cdot\Psi(x))\right)
=\mathcal{R}\left(  (x\cdot\omega_{1})(\omega_{1}\cdot\Psi(x))\right)  ,
\label{E:usingW_6}%
\end{align}
which holds since the longitudinal transform of a potential field
$\mathcal{D}_{1}^{\shortparallel}(\nabla(\omega_{1}\cdot F^{\mathrm{s}}(x))$
vanishes. By combining (\ref{E:usingW_5}) and (\ref{E:usingW_6}) we arrive at
the following formula%
\begin{equation}
\mathcal{R(}\Phi)=\mathcal{R}\left(  (x\cdot\omega_{1})(\omega_{1}\cdot
\Psi(x))\right)  -\frac{\partial^{2}}{\partial p^{2}}\,\mathcal{W}%
_{1}^{\shortparallel}(F) \label{E:another_nice_formula}%
\end{equation}
Now the Laplacian $\Phi$ of the potential $\varphi$ can be reconstructed by
inverting the Radon transform in~(\ref{E:another_nice_formula}), yielding
the whole field can be reconstructed by computing convolutions
(\ref{E:convolve_sol}). This completes the proof of theorem \ref{T:long}.

\section{Vector fields in magnetoacoustoelectric tomography\label{S:MAET}}

Our interest in the Radon transforms of vector field is motivated, in part, by
an inverse problem arising in magnetoacoustoelectric tomography (MAET). This
imaging modality is a novel coupled-physics technique designed to image the
electrical conductivity of biological objects. It is based on measurements of
electric potential arising in conductive tissues when they move in a magnetic field.
In detail, one places the object of interest in a strong constant magnetic
field and illuminates it with ultrasound pulses
\cite{Wen,Grasland,Roth,Zengin,Montalibet}. Frequently this is done with the
object immersed in conductive saline, which provides good acoustic coupling
and facilitates the measurements of the arising electric potential with the use of electrodes
immersed in the liquid. The said potential results from the interaction of the vibrational motion of
electrons and ions contained in a conductive tissue, with magnetic field.
This generates the Lorentz forces that separate the particles of opposite
polarities and, in turn, results in Ohmic current flowing through the
object and the saline. The electric potential associated with this current
is then measured outside of the object, providing the data for the future MAET
reconstruction.

\subsection{A traditional data acquisition scheme}
{ In the remaining part of the Section \ref{S:MAET} and in Section \ref{S:NUMERICS}
we work with the three-dimensional space.}

It has been shown (\cite{Montalibet}) that when the tissue with conductivity
$\sigma(x)$ moves with velocity $\mathbf{V}(t,x)$ within magnetic field
$\mathbf{B}$, the arising Lorentz force will generate Lorentz currents
{$\mathbf{J}^{L}(t,x)$} given by the formula%
\begin{equation}
\mathbf{J}^{L}(t,x)=\sigma(x)\mathbf{B}\times\mathbf{V}(t,x).
\label{E:new-Lorentz}%
\end{equation}
The vibrational velocity $\mathbf{V}(t,x)$ of the tissues arising due to the
ultrasound excitation is governed by the standard wave equation with the speed
of sound that can be assumed constant within soft tissues.
Without loss of generality the speed of sound can be set to 1. Then $\mathbf{V}%
(t,x)$ and the acoustic pressure $p(t,x)$ can be related to the velocity
potential $\zeta(t,x)$ by equations%
\[
\mathbf{V}(t,x)=\frac{1}{\rho}\nabla\zeta(t,x),\qquad p(t,x)=\frac{\partial
}{\partial t}\zeta(t,x).
\]
Here the density $\rho$ is assumed to be constant within soft tissues and
equal to the density of water. The scalar velocity potential $\zeta(t,x)$
itself also satisfies the wave equation in the whole space $\mathbb{R}^{3}$:%
\[
\Delta\zeta(t,x)=\frac{\partial^{2}}{\partial t^{2}}\zeta(t,x).
\]

The time scales of this model are such that the electromagnetic effects are
much faster than the mechanic motion of the liquid \cite{Zengin}. Therefore,
the currents in the system can be considered stationary, corresponding to
velocity $\mathbf{V}(t,x)$ at the given time $t.$ Then, it can be
shown  that the difference of potentials $M(t)$ measured by a
pair of electrodes can be expressed as follows \cite{Kun-MAET}%
\begin{equation}
M(t)=\frac{1}{\rho}\int\limits_{\Omega}\zeta(t,x)\mathbf{B}\cdot
\mathbf{C}(x)dx,\qquad\mathbf{C}(x)\equiv\nabla\times\mathbf{I}(x),
\label{E:new-meas}%
\end{equation}
where the lead current $\mathbf{I}(x)$ is the current that would flow through
the object in the absence of the magnetic and acoustic excitation, if a unit
potential difference were applied to the electrode pair. This quantity appears
in (\ref{E:new-meas}) because $\mathbf{I}(x)$ also describes the sensitivity
of the measuring system to a dipole placed at the point $x.$ Finally, the
domain $\Omega$ in the above equation is the volume occupied by the saline and
by the object immersed in it. Below, it will be convenient for us to consider a
model where $\Omega$ is large and can modeled by the whole space
$\mathbb{R}^{3}$. A measurement corresponding to a given acoustic wave
$\zeta(t,x)$ is, according to (\ref{E:new-meas}), a function of one variable.
The goal of MAET is, by using a sufficiently rich set of excitations
$\zeta(t,x)$, to collect enough information for reconstruction of  the conductivity
$\sigma(x)$ of the tissues.

In the early mathematical work on MAET \cite{Kun-MAET,Ammari2015}
mathematicians would assume that the object and the electrodes remain fixed
and the transducer is moved around the object providing a large family of
excitations $\zeta(t,x).$ Then the inverse problem of MAET naturally decouples
into two steps. Since curl $\mathbf{C}(x)$ is independent from $\zeta(t,x)$,
one considers (\ref{E:new-meas}) as values of projections of the quantity
$\mathbf{B}\cdot\mathbf{C}(x)$ on the complete set of excitations $\zeta
(t,x)$, and reconstructs $\mathbf{B}\cdot\mathbf{C}(x).$ Then, the second step
is to reconstruct the conductivity $\sigma(x)$ from $\mathbf{B}\cdot
\mathbf{C}(x)$, possibly from measurements repeated with two or three
different orientations of $\mathbf{B}$. Depending on the waveforms
$\zeta(t,x)$, the first step frequently can be reduced to one of the known
tomography problems. For example, if one illuminates the object by ideal
plane waves
\[
\zeta(t,x)=\delta(t-x\cdot\omega)
\]
with various directions $\omega\in\mathbb{S}^{2}$, the resulting measurements
can be expressed the Radon transform of $\mathbf{B}\cdot\mathbf{C}(x)$, that
can be easily inverted. Similarly, if one assumes an ideal point-like
transducer that produces spherical outgoing waves, the problem reduces to the
inverse source problem of thermo- and photoacoustic tomography, whose solution
is well known by now (see, e.g. \cite{Kuku,KuKuHand}).

The measuring scheme described above is easy to analyze. However, it does not
work well in practice. Indeed, if electrodes and the object are held in a
fixed position, there are very few directions from which the transducer can
send sound waves into the object without illuminating the electrodes, which generates
strong spurious elecrtic pulses that overwhelm the usefull signal. Thus, researchers are investigating a
different approach to data acquisition \cite{KIW-eng,Sun-rapid}, which assumes that
object is rotated while the electrodes are kept stationary. Equivalently, one
can keep the object fixed, and rotate the electrodes and transducer(s). In
both cases, the curl $\mathbf{C}(x)$ becomes a function of the object (or
electrodes') position, and the traditional two step reconstruction procedure described
above is not applicable anymore.

\begin{figure}[t]
\begin{center}
\includegraphics[scale=0.70]{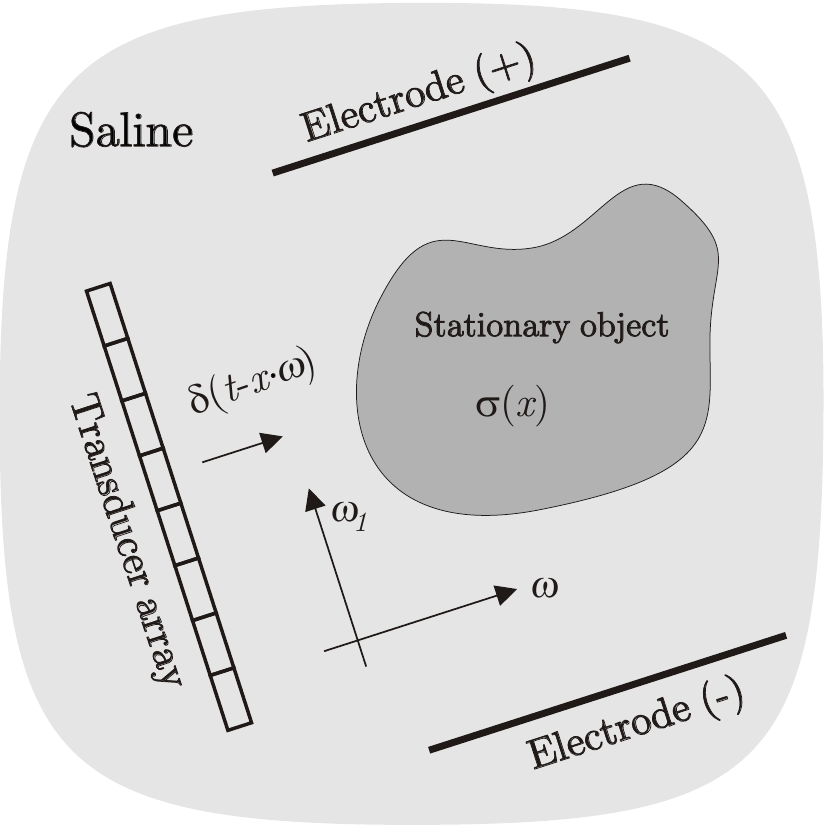}
\end{center}
\par
\vspace{-4mm}\caption{The novel MAET scheme; the electrode/transducer assembly
rotates around the object}%
\label{F:rot-geom}%
\end{figure}

\subsection{Acquisition scheme with a rotated object}

We thus consider here the novel acquisition scheme for MAET, with a rotating
electrode/transducer assembly, as shown in Figure~\ref{F:rot-geom}. The object
under investigation is immersed in a conductive saline, and the assembly
rotates around it. For simplicity, we model the propagation of currents in
this scheme assuming that the electrodes are placed far away from the object.
Here, the conductive medium is presumed to occupy all of $\mathbb{R}^{3},$
with the conductivity $\sigma(x)$ being constant and known outside of the
support $\Omega_{0}$ of the inhomogeneity, i.e. $\sigma(x)=\sigma_{0}$ for
$x\in\mathbb{R}^{3}\backslash\Omega_{0}.$ Then the lead current $\mathbf{I}$
is a function of $x$ and the orientation $\nu$ of the electrodes, i.e.
$\mathbf{I}\equiv\mathbf{I}_{\nu}(x)$. We assume that, in the absence of the
inhomogeneity, the electrodes generate field $E_{\nu}^{0}=\nu.$ In the presence
of inhomogeneity, additional potential $u_{\nu}(x)$ will arise, so that the
current can be expressed as
\begin{equation}
\mathbf{I}_{\nu}(x)=\sigma(x)(\nu+\nabla u_{\nu}(x)), \label{E:conductivity}
\end{equation}
subject to the following condition at infinity:%
\begin{equation}
\mathbf{I}_{\nu}(x)=\sigma_{0}E_{\nu}^{0}+o(1)=\sigma_{0}\nu+o(1)\text{ as
}x\rightarrow\infty. \notag %
\end{equation}
Due to the absence of sinks and sources of charges in the medium, current
$\mathbf{I}_{\nu}(x)$ is solenoidal. By setting to zero the divergence of
(\ref{E:conductivity}) we find that potential $u_{\nu}(x)$ solves the
divergence equation, subject to the decay at infinity%
\begin{align}
\nabla\cdot \left( \sigma(x){\nabla} u_{\nu}(x) \right) &  =-\nu\cdot\nabla\sigma(x),\quad
x\in\mathbb{R}^{3},\label{E:div-infty}\\
\lim_{x\rightarrow\infty}u_{\nu}(x)  &  =0. \label{E:MAETdecay}%
\end{align}

The above simplified model will allow us to express potential $u_{\nu}$ and
current $\mathbf{I}_{\nu}$ for an arbitrary orientation $\nu$ through three
"basis" solutions. Indeed, let us consider the solutions\ $u^{(j)}(x),$
$j=1,2,3$ of (\ref{E:div-infty}), (\ref{E:MAETdecay}) corresponding to
directions $\nu=e_{1},e_{2},e_{3}$, where $e_{_{j}}$'s are the canonical
vectors in $\mathbb{R}^{3}$:
\[
u^{(j)}(x)\equiv u_{e_{j}}(x),\quad j=1,2,3.
\]
The corresponding total currents and their curls will be denoted by
$\mathbf{I}^{(j)}(x)$ and $\mathbf{C}^{(j)}(x)$ respectively:%
\[
\mathbf{I}^{(j)}(x)=\sigma(x)(e_{j}+\nabla u^{(j)}(x)),\quad
\mathbf{C}^{(j)}(x)=\nabla\times\mathbf{I}^{(j)}(x),\quad j=1,2,3.
\]
Due to the linearity of the problem (\ref{E:div-infty}), (\ref{E:MAETdecay})
with respect to the { right} hand side of (\ref{E:div-infty}), for an arbitrary
direction $\nu$ the potential $u_{\nu}(x)$ and current $\mathbf{I}_{\nu
}(x)$ can be represented as the following linear combinations:
\begin{align}
u_{\nu}(x)  &  =\sum_{j=1}^3 (e_{j}\cdot\nu)u^{(j)}(x),\nonumber\\
\mathbf{I}_{\nu}(x)  &  =\sum_{j=1}^3 (e_{j}\cdot\nu)\mathbf{I}^{(j)}%
(x)=\sigma(x)\sum_{j=1}^3 (e_{j}\cdot\nu)(\nu+\nabla u^{(j)}(x))=\sigma
(x)(\nu+\sum_{j=1}^3 (e_{j}\cdot\nu)\nabla u^{(j)}(x)). \label{E:current_combi}%
\end{align}

Let us denote by $\mathbf{C}_{\nu}(x)$ the curl of the three-dimensional field
$\mathbf{I}_{\nu}(x)$:%
\[
\mathbf{C}_{\nu}(x)=\nabla\times\mathbf{I}_{\nu}(x).
\]
Recall that MAET measurements are directly related to $\mathbf{B\cdot C}_{\nu
}(x)$ (see equation (\ref{E:new-meas})). Let us assume for now that the
transducer is oriented along the vector $\omega$ perpendicular to $\nu$, and
is producing ideal plane waves. Then, the corresponding measurements
$M(\nu,\omega,t)$ can be expressed as
\begin{equation}
M(\nu,\omega,t)=\frac{1}{\rho}\int\limits_{\Omega_{0}}\delta(t-x\cdot
\omega)\mathbf{B}\cdot\mathbf{C}_{\nu}(x)dx. \label{E:new_measuremnt}%
\end{equation}
Here the integration is restricted to $\Omega_{0}$ since the curls
$\mathbf{C}^{(j)}(x)$ of currents $\mathbf{I}^{(j)}(x)$ vanish within any
region with constant conductivity, i.e. outside of $\Omega_{0}$. By combining
equations (\ref{E:current_combi}) and\ (\ref{E:new_measuremnt}) one obtains
\[
M(\nu,\omega,t)=\frac{1}{\rho}\int\limits_{\Omega_{0}}\delta(t-x\cdot
\omega)\mathbf{B}\cdot\sum_{j=1}^3(e_{j}\cdot\nu)\mathbf{C}^{(j)}(x)  dx=\int%
\limits_{\Omega_{0}}\delta(t-x\cdot\omega)\nu\cdot\mathfrak{C}(x)  { dx},
\]
where we introduced the vector field $\mathfrak{C}(x)$ defined as follows%
\[
\mathfrak{C}(x)=\frac{1}{\rho}\left(  \mathbf{B}\cdot\mathbf{C}^{(1)}%
,\mathbf{B}\cdot\mathbf{C}^{(2)},\mathbf{B}\cdot\mathbf{C}^{(3)}\right)  (x).
\]
We thus recognize $M(\nu,\omega,t)$ as a longitudinal Radon transform of the
vector field $\mathfrak{C}(x)$. If one directs vector $\nu$ to be parallel to one of
the vectors $\omega_{1}$ or $\omega_{2}$ orthogonal to $\omega,$ measurements
$M(\nu,\omega,t)$ coincide with the longitudinal transforms $\left[
\mathcal{D}_{k}^{\shortparallel}\mathfrak{C}\right]  (\omega,p)$ defined by
equation (\ref{E:def_longitudinal}):%
\[
M(\omega_{k},\omega,t)=\left[  \mathcal{D}_{k}^{\shortparallel}F\right]
(\omega,t),\quad k=1,2.
\]

If one manages to reconstruct from MAET measurements field $\mathfrak{C}(x),$
projections of curls $\mathbf{B}\cdot\mathbf{C}^{(j)}$ are easily found:%
\[
\mathbf{B}\cdot\mathbf{C}^{(j)}(x)=\rho e_{j}\cdot\mathfrak{C}(x),\quad
j=1,2,3.
\]
Then, { the} measurements can be repeated with alternatively directed
$\mathbf{B}$, until $\mathbf{C}^{(j)}$'s can be determined. After that,
currents $\mathbf{I}^{(j)}$ and conductivity $\sigma(x)$ can be reconstructed,
following the techniques presented in \cite{Kun-MAET,Ammari2015}. In a
simplified two-dimensional setting (as in \cite{KIW-eng}), curls
$\mathbf{C}^{(j)},$ $j=1,2,$ are oriented orthogonally to the plane in which
currents are flowing, and magnetic induction $\mathbf{B}$ is parallel to
$\mathbf{C}^{(j)}$'s. Additional directions of $\mathbf{B}$ are not needed in
this case.

However, analysis presented in the previous sections of this paper shows that
only a solenoidal part of a vector field $\mathfrak{C}(x)$ can be
reconstructed from known longitudinal transforms $\mathcal{D}_{k}%
^{\shortparallel}\mathfrak{C},\quad k=1,2.$ In general, there is no reason to
expect that field $\mathfrak{C}(x)$ is solenoidal. As a way to remedy this
situation, we propose to conduct additional measurements, by illuminating the
object with linearly modulated acoustic waves in the form
\begin{equation}
\zeta(t,x)=(x\cdot\omega_{1})\delta(t-x\cdot\omega), \label{E:new_waves}%
\end{equation}
with directions $\omega$ varying over $\mathbb{S}^{2}$, and $\omega_{1}$
aligned with the electrode directions. Such measurements $N(\omega_{1}%
,\omega,t)$ are described by the formula
\[
N(\omega_{1},\omega,t)=\frac{1}{\rho}\int\limits_{\Omega_{0}}(x\cdot\omega
_{1})\delta(t-x\cdot\omega)\mathbf{B}\cdot\sum_{j=1}^3 (e_{j}\cdot\nu
)\mathbf{C}^{(j)}(x)dx=\int\limits_{\Omega_{0}}(x\cdot\omega_{1}%
)\delta(t-x\cdot\omega)(\omega_{1}\cdot\mathfrak{C}(x))dx;
\]
they can be expressed as the weighted longitudinal transform $\mathcal{W}%
_{1}^{\shortparallel}\mathfrak{C}$:
\[
N(\omega_{1},\omega,t)=\left[  \mathcal{W}_{k}^{\shortparallel}\mathfrak{C}%
\right]  (\omega,t).
\]
Theorem \ref{T:long} states that the vector field $\mathfrak{C}(x)$ can be
reconstructed from its longitudinal transforms $\mathcal{D}_{1}%
^{\shortparallel}\mathfrak{C}$ and $\mathcal{D}_{2}^{\shortparallel
}\mathfrak{C}$ and weighted longitudinal transform $\mathcal{W}_{1}%
^{\shortparallel}\mathfrak{C}$ using formulas (\ref{E:invert_for_sol}%
)-(\ref{E:convolve_sol}).

MAET\ measurements using linearly modulated waves (\ref{E:new_waves}) have not
been implemented previously, in part because the benefit of such measurements
have not been previously discussed in the literature. However, there is no
physical obstacles for conducting such an experiment. Indeed, functions in the
form (\ref{E:new_waves}) are easily seen to satisfy the wave equation. They
can be generated in a number of ways. For example, if a transducer array is
used for sound generation (as in \cite{Sun-rapid}), such waves can be obtained
by scaling linearly the excitation voltage along the transducer elements. If a
synthetic flat detector is utilized (as in \cite{KIW-eng}), one obtains the
desired result by a weighted averaging of individual measurements. Such sound
waves can also be excited using optically generated ultrasound
\cite{Xia-MAET-lgus,Xia-MAET-lgus-SPIE}, by using optical excitation with
linearly varying intensity.

We will not attempt to simulate a full MAET experiment with linearly modulated
sound waves in this paper, leaving it to the future work. Below we present
numerical simulations of reconstruction of a 3D vector field from its
longitudinal transforms $\mathcal{D}_{1}^{\shortparallel}F$, $\mathcal{D}%
_{2}^{\shortparallel}F$ and the weighted longitudinal transform $\mathcal{W}%
_{1}^{\shortparallel}F$.
\begin{figure}[t]
\noindent \begin{tabular}{ccc}
\includegraphics[scale=0.48]{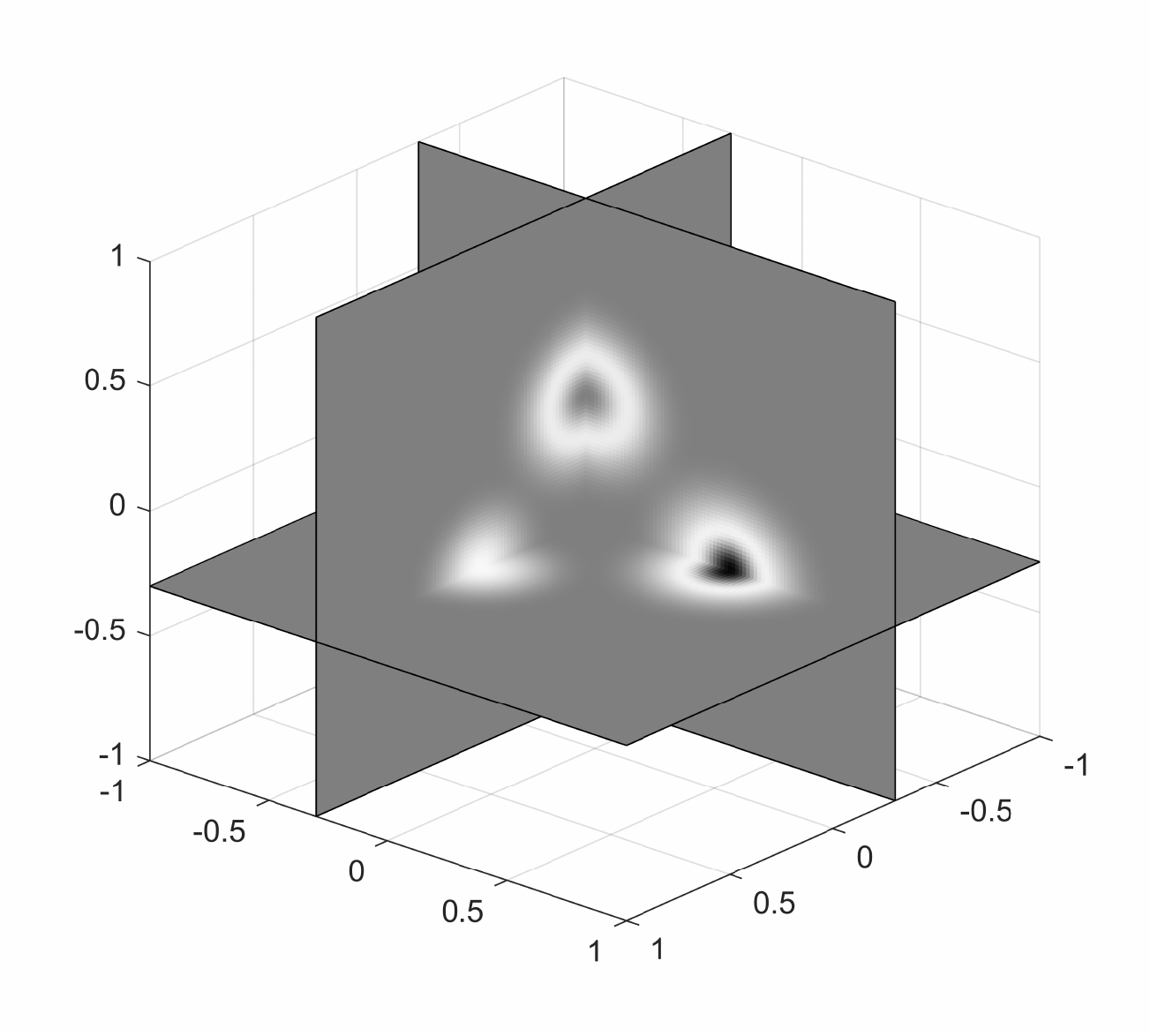} \hspace{-10mm}&
\hspace{-10mm} \includegraphics[scale=0.48]{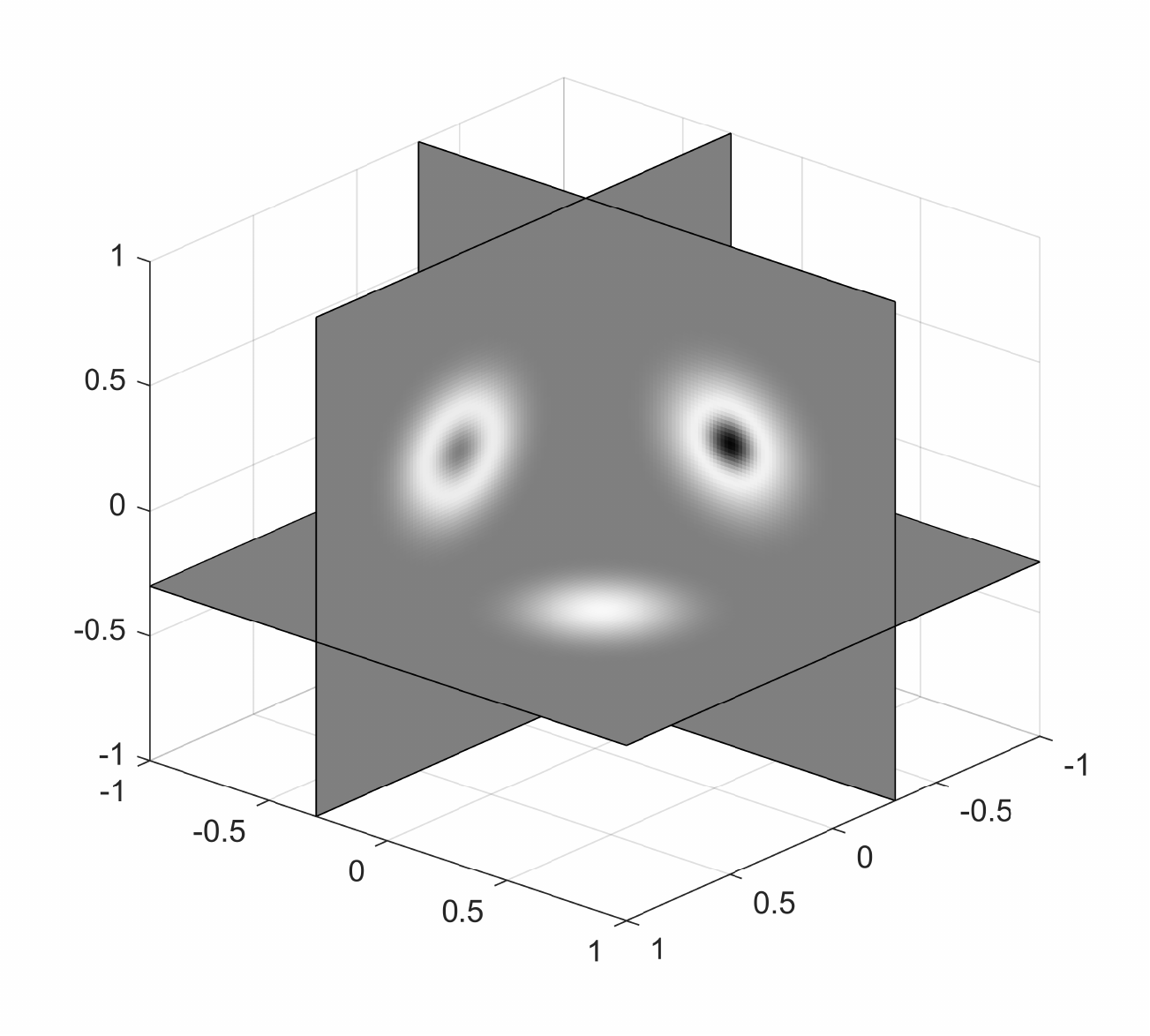} \hspace{-10mm}&
\hspace{-10mm} \includegraphics[scale=0.48]{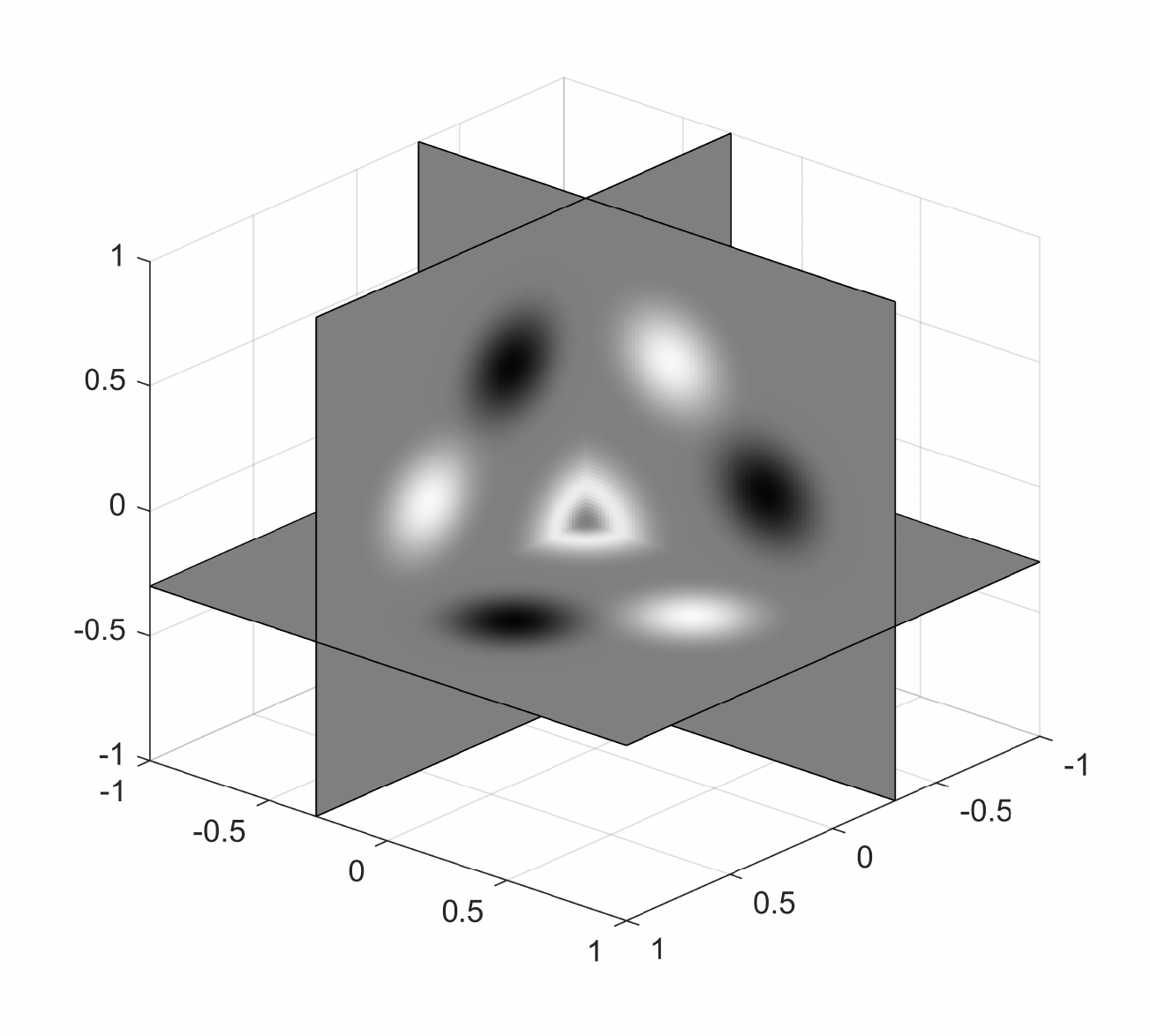}  \\
$F_1(x)$ &  $F_2(x)$  & $F_3(x)$ \\
\end{tabular}
\begin{center}
\includegraphics[scale=0.7]{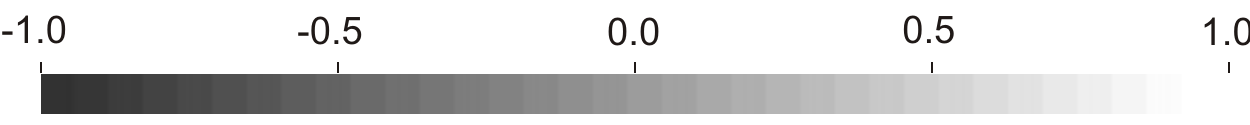}
\end{center}
\par
\caption{Components of vector field $F$ and the gray scale we use throughout the paper}
\label{F:phantom}%
\end{figure}

\section{Numerical simulations\label{S:NUMERICS}}

The goal of this section is to demonstrate the validity of the exact
reconstruction formulas (\ref{E:invert_for_sol})-(\ref{E:convolve_sol}) in a
numerical experiment. To this end we picked a smooth phantom $F(x)$
defined in the unit ball $B(1,0)$ in $\mathbb{R}^{3}.$ Each component
$F_{j}(x),$ $j=1,2,3$ is a linear combination of a rather arbitrary collection
of shifted radially symmetric functions ("bumps")%
\begin{align*}
F_{j}(x)  &  =\sum_{k=1}^{M_{j}}a_{k,j}f\left(  x-x_{k,j}^{(c)},R_{k,j}%
\right)  ,\\
f(r,R) &  =\left\{
\begin{array}
[c]{cc}%
\left(  1-\frac{r^{2}}{R}\right)  ^{4}, & r<R,\\
0, & r\geq R,
\end{array}
\right.
\end{align*}
where $a_{k,j}$'s are weights, and $x_{k,j}^{(c)}$ are the centers and
$R_{k,j}$ are the radii of support of the corresponding bumps. For the ease of
visualization, all centers $x_{k,j}^{(c)}$ were chosen to lie in one of the
planes $x_{1}=-0.3,$ $x_{2}=-0.3,$ or $x_{3}=-0.3.$ Each so defined component
$F_{j}$ is a $C^{3}(B(1,0))$ function. The phantom is shown in Figure~\ref{F:phantom},
{ and the values of constants $x_{k,j}$, $R_{k,j}$, and $a_{k,j}$ used in our simulations
can be found in Table~1. In addition, $M_1=5$, $M_2=5$, $M_3=8$.}
\begin{table}[t]
\begin{center}
\begin{tabular}{|r|r|c|r|r|}
\hline
$j$ & $k$ & $x_{k,j}$ & $R_{k,j}$ & $a_{k,j}$ \\ \hline
 $1$ & $1$ &$(0.2,-.3,-.3)         $  &  $0.4  $& $1.0   $ \\
 $1$ &2$ $ &$( -.3 ,  -.3  ,  0.2 )$  &  $  0.5$& $  1.7 $ \\
 $1$ &3$ $ &$ (-.3 ,  -.3  ,  0.2 )$  &  $ 0.25$& $ -1.7 $ \\
 $1$ &4$ $ &$ (-.3 ,  0.3  ,  -.3 )$  &  $  0.5$& $  1.5 $ \\
 $1$ &5$ $ &$ (-.3 ,  0.3  ,  -.3 )$  &  $  0.2$& $  -2.5$ \\   \hline
 $2$ &1$ $ &$(0.2  ,  0.2  ,  -.3 )$  &  $ 0.5 $& $ 1.0  $ \\
 $2$ &2$ $ &$ (-.3 ,  0.3  ,  0.2 )$  &  $ 0.5 $& $ 1.5  $ \\
 $2$ &3$ $ &$ (-.3 ,  0.3  ,  0.2 )$  &  $ 0.2 $& $ -2.5 $ \\
 $2$ &4$ $ &$ (0.3 ,  -.3  ,  0.2 )$  &  $ 0.5 $& $ 1.7  $ \\
 $2$ &5$ $ &$ (0.3 ,  -.3  ,  0.2 )$  &  $0.25 $& $-1.7  $ \\    \hline
 $3$ &1$ $ &$(-.3  ,  -.3  ,  -.3 )$  &  $0.45 $& $  1.5 $ \\
 $3$ &2$ $ &$(-.3  ,  -.3  ,  -.3 )$  &  $ 0.2 $& $ -1.5 $ \\
 $3$ &3$ $ &$(-.3  ,  .05  ,  .45 )$  &  $ 0.4 $& $  1.0 $ \\
 $3$ &4$ $ &$(-.3  ,  .45  ,  .05 )$  &  $ 0.4 $& $  -1.0$ \\
 $3$ &5$ $ &$(.05  ,  -.3  , .45  )$  &  $ 0.4 $& $  -1.0$ \\
 $3$ &6$ $ &$(.45  ,  -.3  , .05  )$  &  $ 0.4 $& $  1.0 $ \\
 $3$ &7$ $ &$(.05  ,  .45  ,  -.3 )$  &  $ 0.4 $& $  1.0 $ \\
 $3$ &8$ $ &$(.45  ,  .05  ,  -.3 )$  &  $ 0.4 $& $  -1.0$ \\     \hline
\end{tabular}%
\end{center}
\caption{Values of constants $x_{k,j}$, $R_{k,j}$, and $a_{k,j}$ used in both simulations} \vspace{-10mm}
\end{table}

\begin{figure}[t]
\begin{tabular}{ccc}
\includegraphics[scale=0.48]{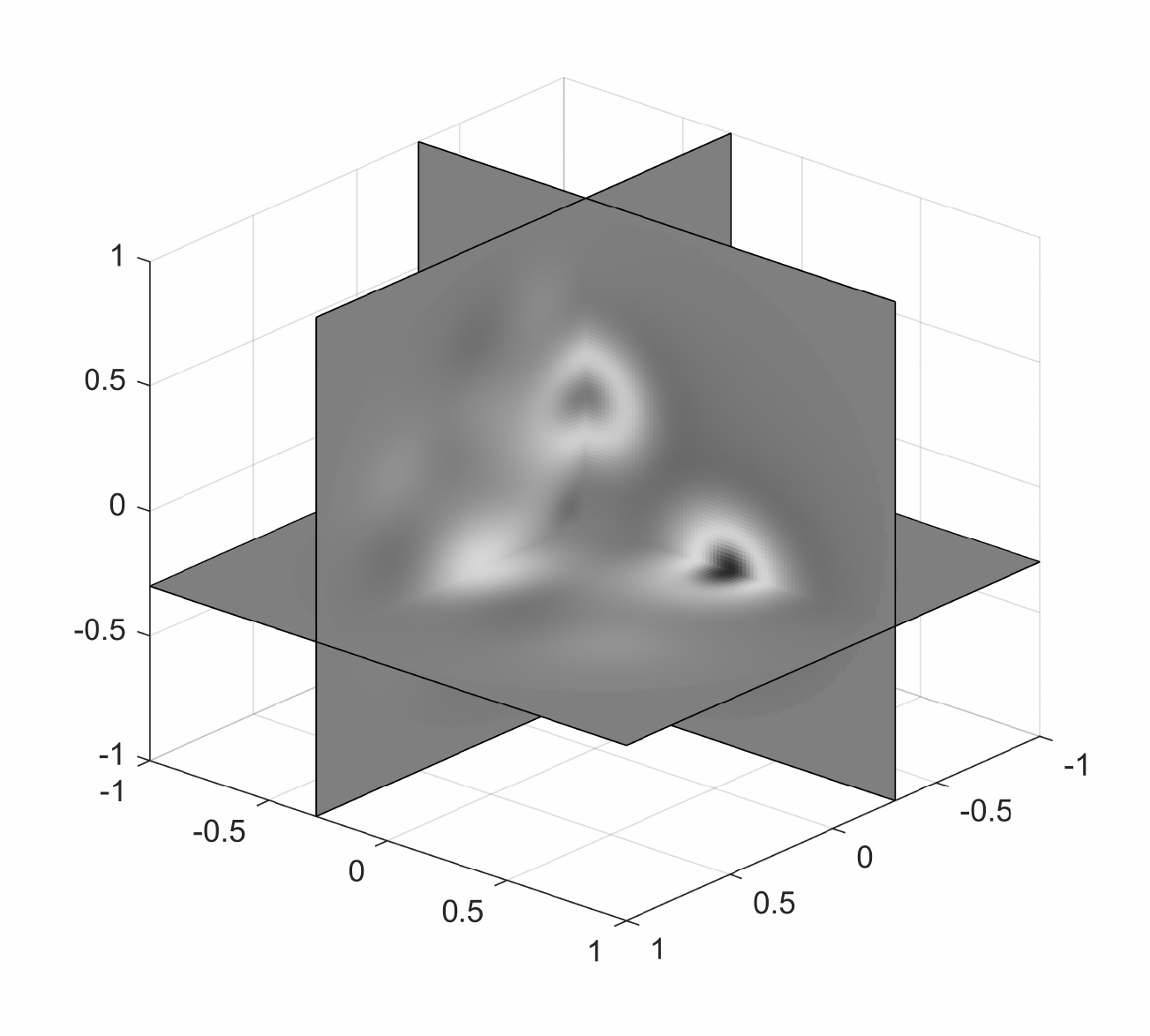} \hspace{-10mm}&
\hspace{-10mm} \includegraphics[scale=0.48]{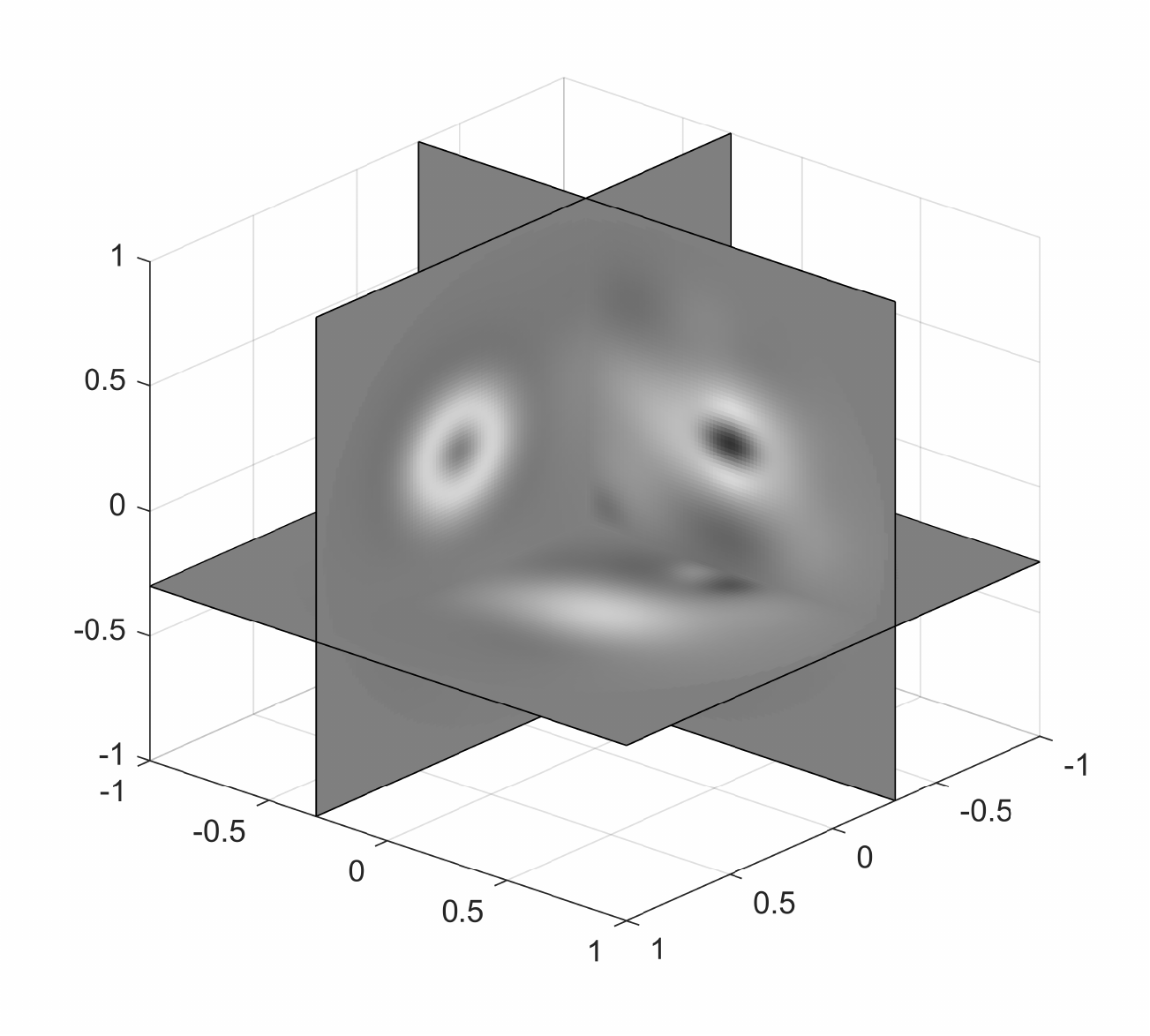} \hspace{-10mm}&
\hspace{-10mm} \includegraphics[scale=0.48]{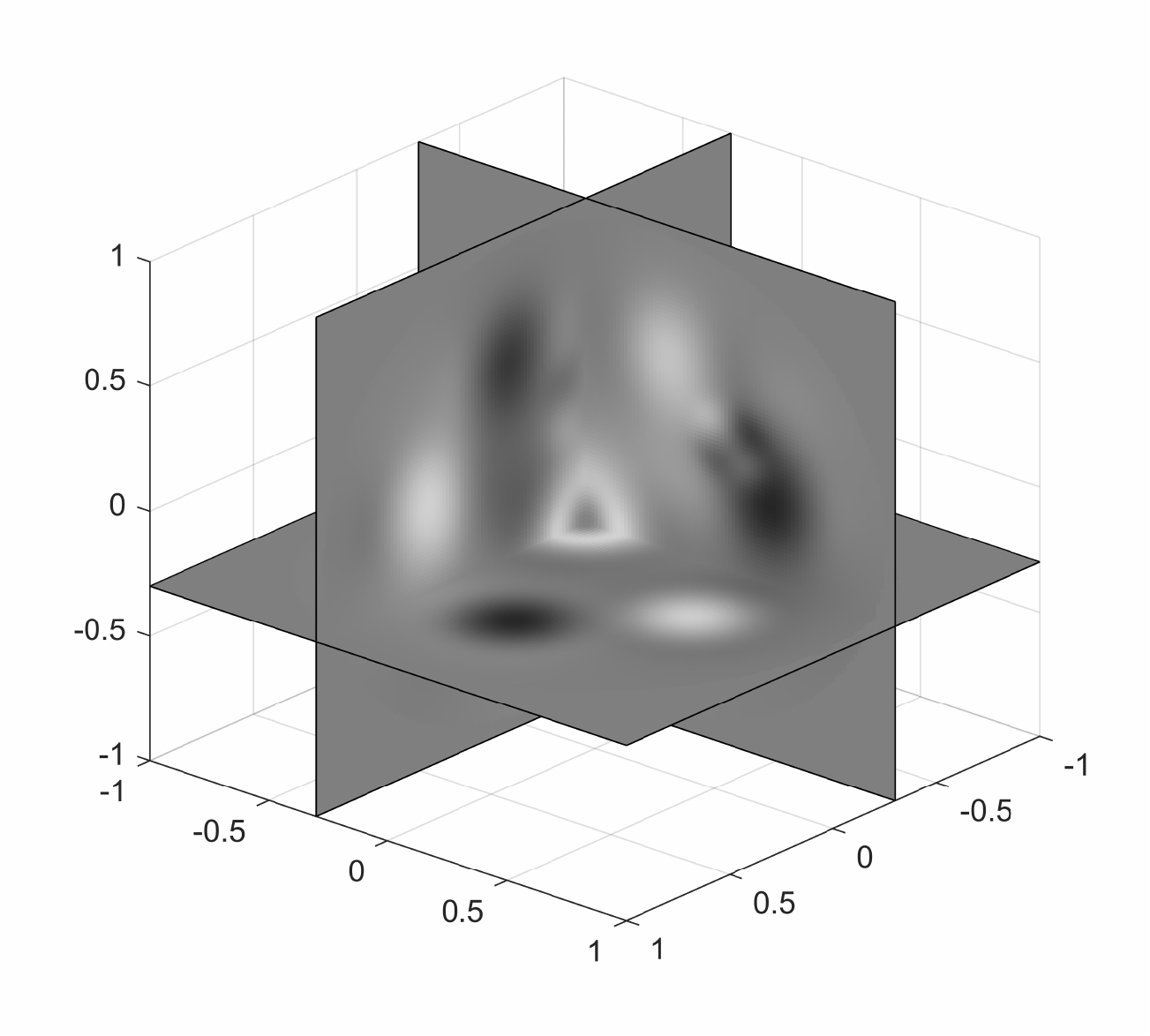}  \\
$F^\mathrm{s}_1(x)$ &  $F^\mathrm{s}_2(x)$  & $F^\mathrm{s}_3(x)$ \\
\\
\includegraphics[scale=0.48]{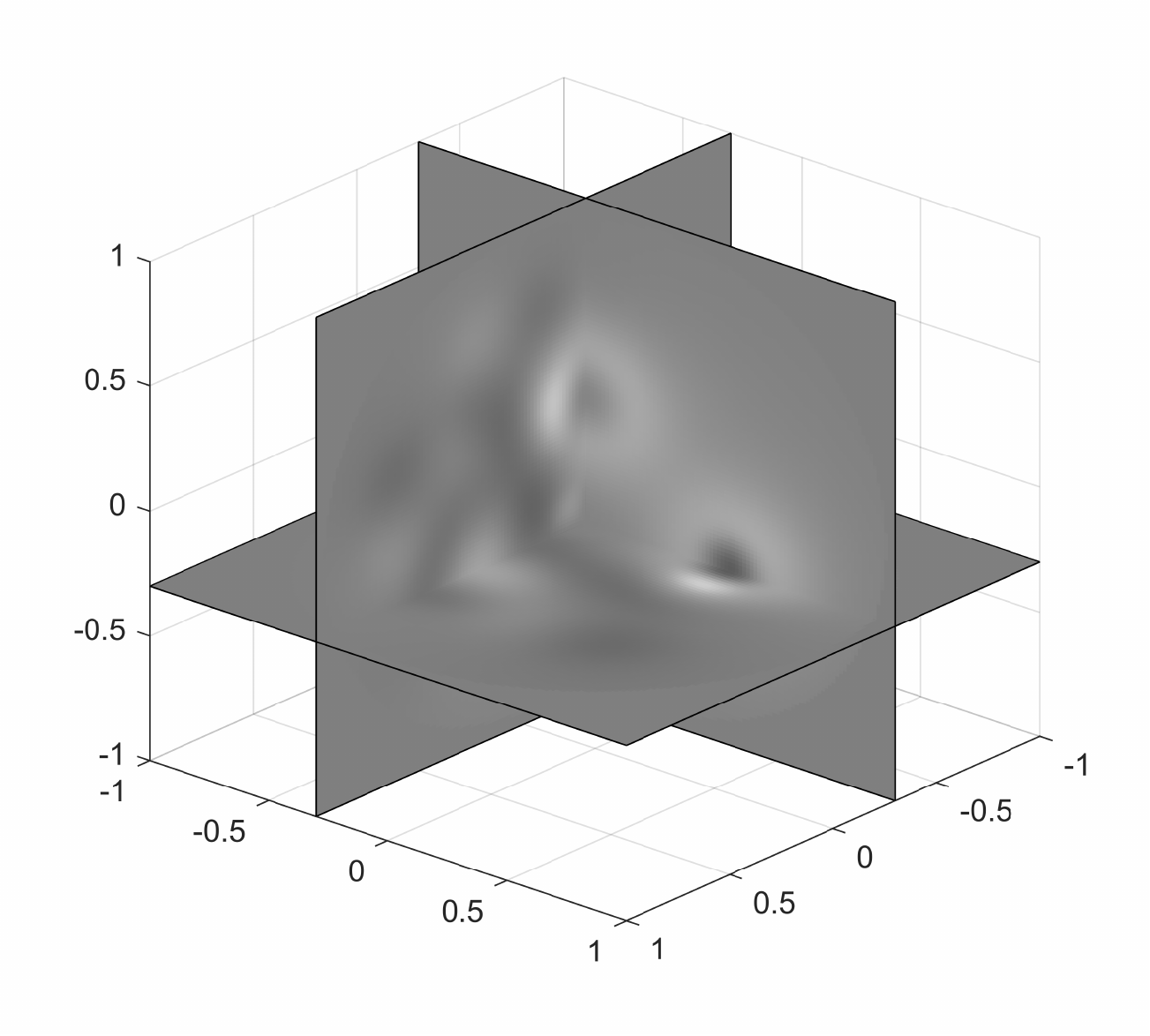} \hspace{-10mm}&
\hspace{-10mm} \includegraphics[scale=0.48]{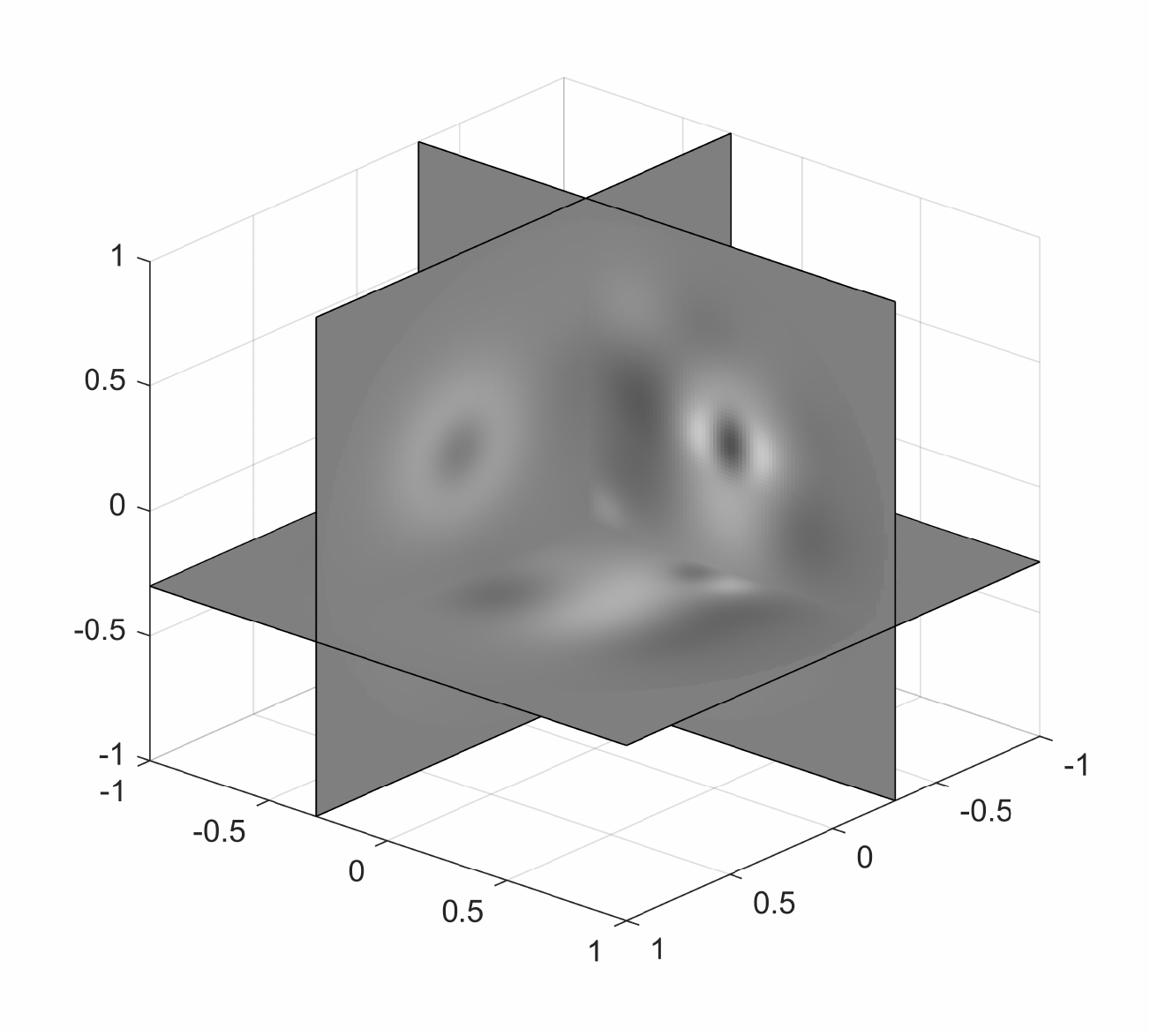} \hspace{-10mm}&
\hspace{-10mm} \includegraphics[scale=0.48]{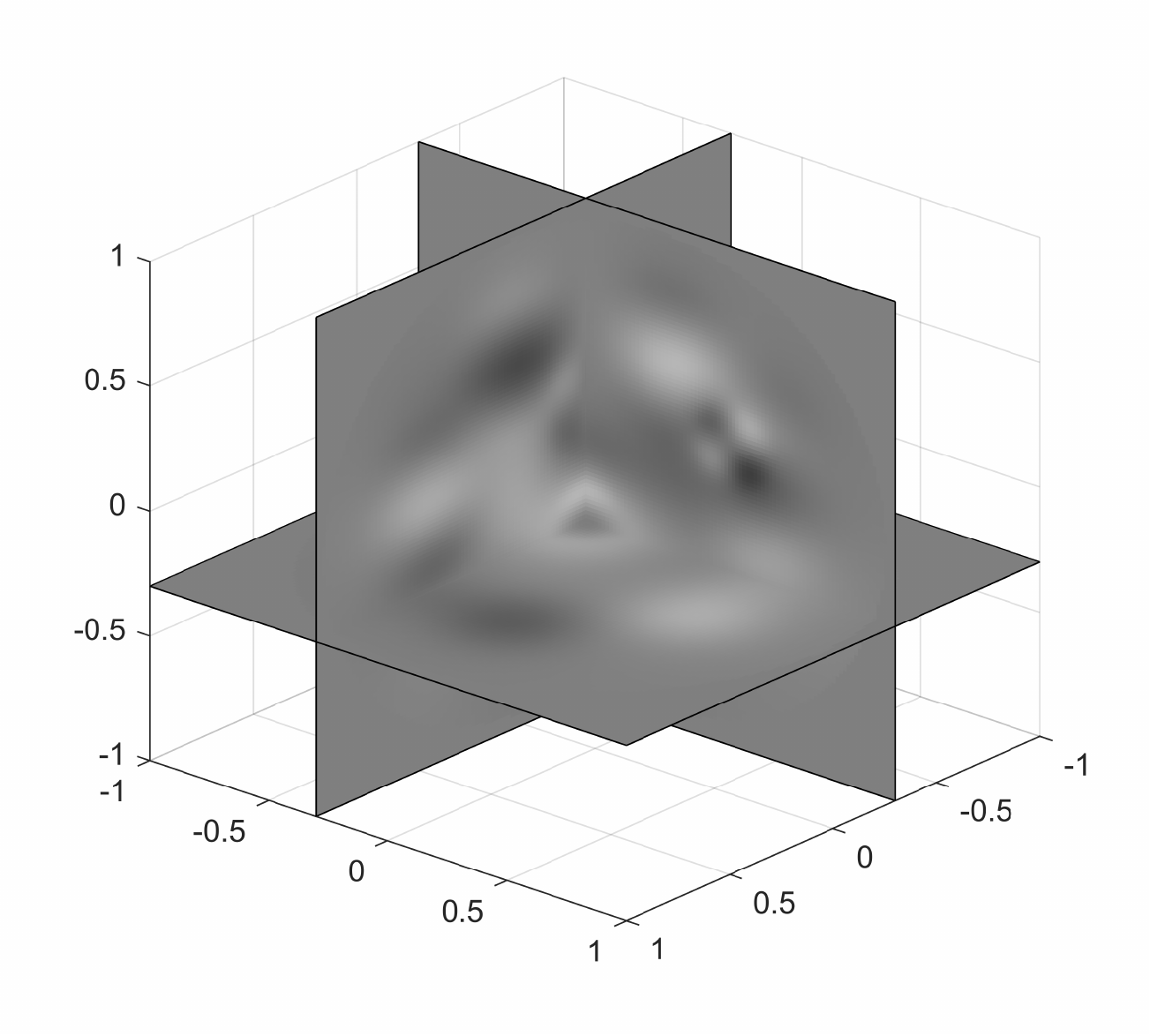}  \\
$F^\mathrm{p}_1(x)$ &  $F^\mathrm{p}_2(x)$  & $F^\mathrm{p}_3(x)$ \\
\\
\end{tabular}
\par
\caption{Reconstructed solenoidal and potential parts of the field,  $F^\mathrm{s}$ and  $F^\mathrm{p}$ }
\label{F:rec_nonoise}%
\end{figure}
Formulas (\ref{E:def_Wk})-(\ref{E:vectorRadon}) show that one can find values
of longitudinal transforms $\mathcal{D}_{1}^{\shortparallel}F$, $\mathcal{D}%
_{2}^{\shortparallel}F$, and $\mathcal{W}_{1}^{\shortparallel}F$ by computing
the standard and the linearly weighted Radon transforms of each of the $F_{j}%
$, $j=1,2,3.$ The latter transforms of the radial bump functions $f\left(
x-x_{k,j}^{(c)},R_{k,j}\right)  $ are given by the following formulas that can
be obtained by elementary calculations:%
\[
\left[  \mathcal{R}f\left(  x-x_{k,j}^{(c)},R_{k,j}\right)  \right]
(\omega,p)=\frac{\pi}{5}R_{k,j}^{2}\left(  1-\frac{\left(  p-\omega\cdot
x_{k,j}^{(c)}\right)  ^{2}}{R_{k,j}^{2}}\right)  ^{5},\text{ if }\left\vert
p-\omega\cdot x_{k,j}^{(c)}\right\vert <R_{k,j},\text{ }0\text{ otherwise,}%
\]
and
\[
\left[  \mathcal{R}\left\{  \mathcal{(}\omega_{1}\cdot x)f\left(
x-x_{k,j}^{(c)},R_{k,j}\right)  \right\}  \right]  (\omega,p)=\left(
\omega_{1}\cdot x_{k,j}^{(c)}\right)  \left[  \mathcal{R}f\left(
x-x_{k,j}^{(c)},R_{k,j}\right)  \right]  (\omega,p).
\]

While our formulas are valid for any choice of orthonormal basis vectors
$\omega_{1}(\omega)$ and $\omega_{2}(\omega)$, for numerical simulations we defined
these vectors as follows. Vector $\omega_{2}$ was chosen to lie in the
horizontal plane spanned by canonical vectors $e_{1}$ and $e_{2}$; it was
computed as follows:
\[
\omega_{2}(\omega)=\frac{\omega_{2}^{\ast}(\omega)}{|\omega_{2}^{\ast}%
(\omega)|},\text{ where }\omega_{2}^{\ast}(\omega)=(-(\omega\cdot
e_{2}),(\omega\cdot e_{1}),0).
\]
{ The directions of $\omega$ were discretized in such a way (see the next paragraph),
that the values  $(0,0,1)$ and $(0,0,-1)$ were never used, and the above formula for $\omega_2$
was always well defined.}
Vector $\omega_{1}(\omega)$ was computed as the cross-product $\omega
_{1}(\omega)=\omega\times\omega_{2}(\omega).$

The following grid in the variables $(\omega,p)$ was used to compute the Radon
transforms. Variable $p$ was discretized using a uniform gird with $257$ nodes
in the interval $[-1,1].$ Vector $\omega(\theta,\varphi)=(\sin\varphi
\cos\theta,\sin\varphi\sin\theta,\cos\varphi)$ was discretized using a product
grid on { $[0, 2 \pi] \times [0,\pi]$}, with $513$ uniformly spaced nodes in the variable $\theta$ and $256$
Gaussian nodes in the variable $t=\cos\varphi$.
{ For simplicity of presentation we did not use the redundancy in the Radon transform to reduce the
required data and the computational complexity. However, in practice it is sufficient to vary
$\omega$ over half a sphere and multiply the result by the factor of 2. }

The inversion of the classical Radon transform required by equations
(\ref{E:invert_for_sol}) and (\ref{E:invert_for_pot}) was implemented by
discretizing the 3D version of the formula (\ref{E:FBP}), with $\alpha=0$:%
\begin{equation}
f(x)=[\mathcal{R}^{-1}g](x)=\frac{1}{8\pi^{2}}\left[  \mathcal{R}^{\#}\left(
\frac{\partial}{\partial^{2}p}g(\omega,p)\right)  \right]  (x),\qquad x\in
B(1,0).\label{E:3FBP}%
\end{equation}
The inversion was computed in the nodes of $257\times257\times257$ Cartesian
grid in $x$, for $ { |x| }\le 1$ only.
For our first simulation, the derivatives in $p$ in
(\ref{E:invert_for_pot}) and in (\ref{E:3FBP}) were computed by a spectrally
accurate algorithm, using the Fast Fourier transform (FFT), in order to achieve
high accuracy when processing theoretically exact data. The components of the
reconstructed fields $F^{\mathrm{s}}(x)$ and $F^{\mathrm{p}}(x)$ are shown in
Figure~\ref{F:rec_nonoise} (the gray scale used in the images is the same as in Figure~\ref{F:phantom}).
When added together, these fields produce an accurate
approximation to the exact $F(x)$. When plotted in a grey scale figure (not shown here) the
reconstructed $F(x)$ is indistinguishable from the exact field presented in
Figure~\ref{F:phantom}. In this case, the relative $L_{2}$ error of the reconstruction is 0.09\% and the
relative $L_{\infty}$ error does not exceed 0.3\%. This is consistent with the
exactness of our reconstruction formulas.

Our second numerical simulation aims to demonstrate the noise
sensitivity of formulas (\ref{E:invert_for_sol})-(\ref{E:convolve_sol}). In
the above mentioned equations, functions $\Phi$ and $\Psi_{j},  { \quad j=1,2,...,d}$ are reconstructed from
the second derivatives  of the data in $p$. This is followed by convolutions
with smoothing kernels in (\ref{E:convolve_sol}). However, the solenoidal part
$F^{\mathrm{s}}$ of the field is obtained by convolutions with the
fundamental solution $(\Psi_{j}\ast G),\quad j=1,...,{d},$ whereas the
potential part is computed by convolution of $\Phi$ with the gradient $\nabla G$
of the fundamental solution. This additional differentiation implies that the
potential part should be more sensitive to high spatial frequencies of the
noise.

\begin{figure}[t]
\begin{tabular}{ccc}
\includegraphics[scale=0.48]{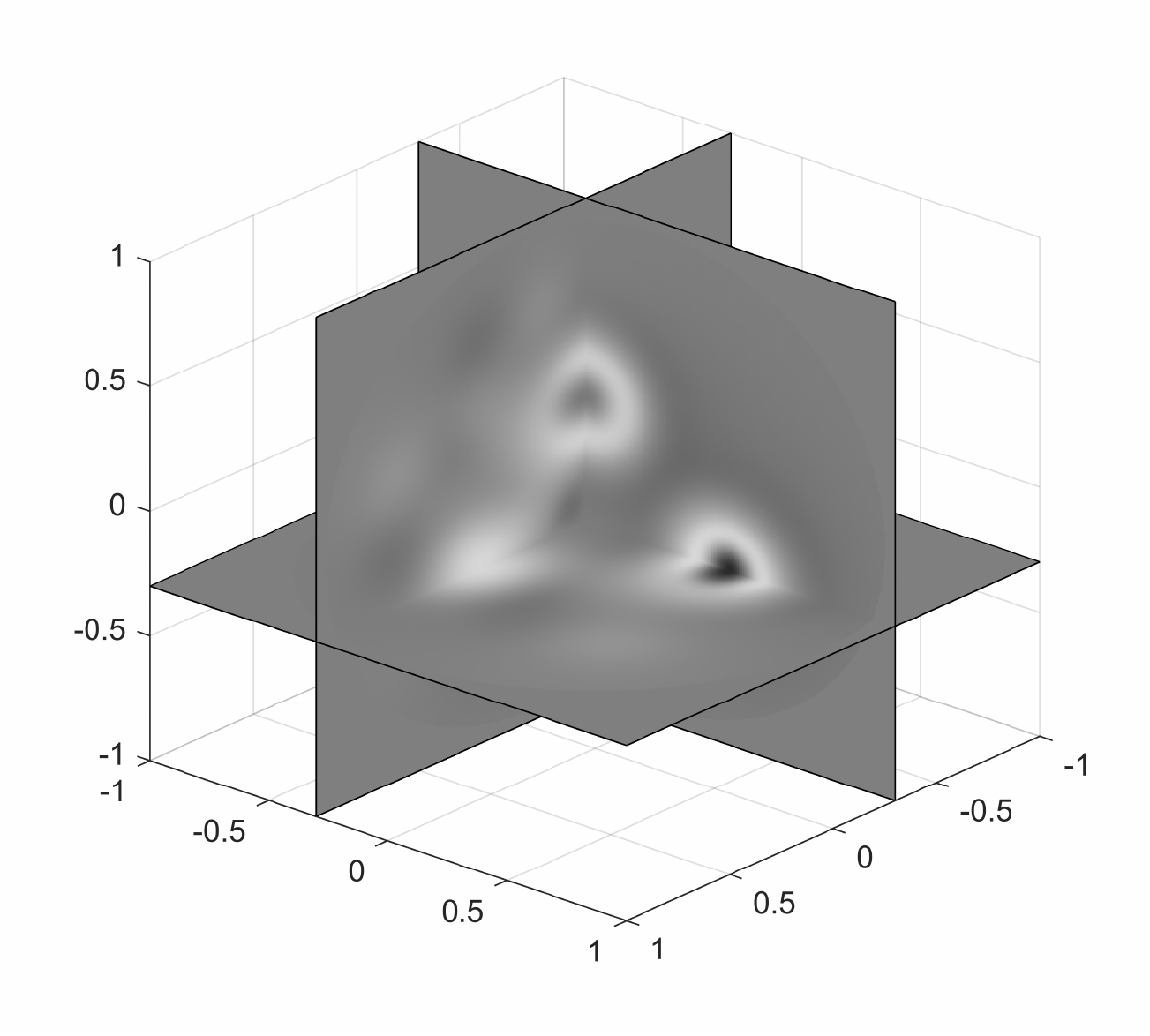} \hspace{-10mm}&
\hspace{-10mm} \includegraphics[scale=0.48]{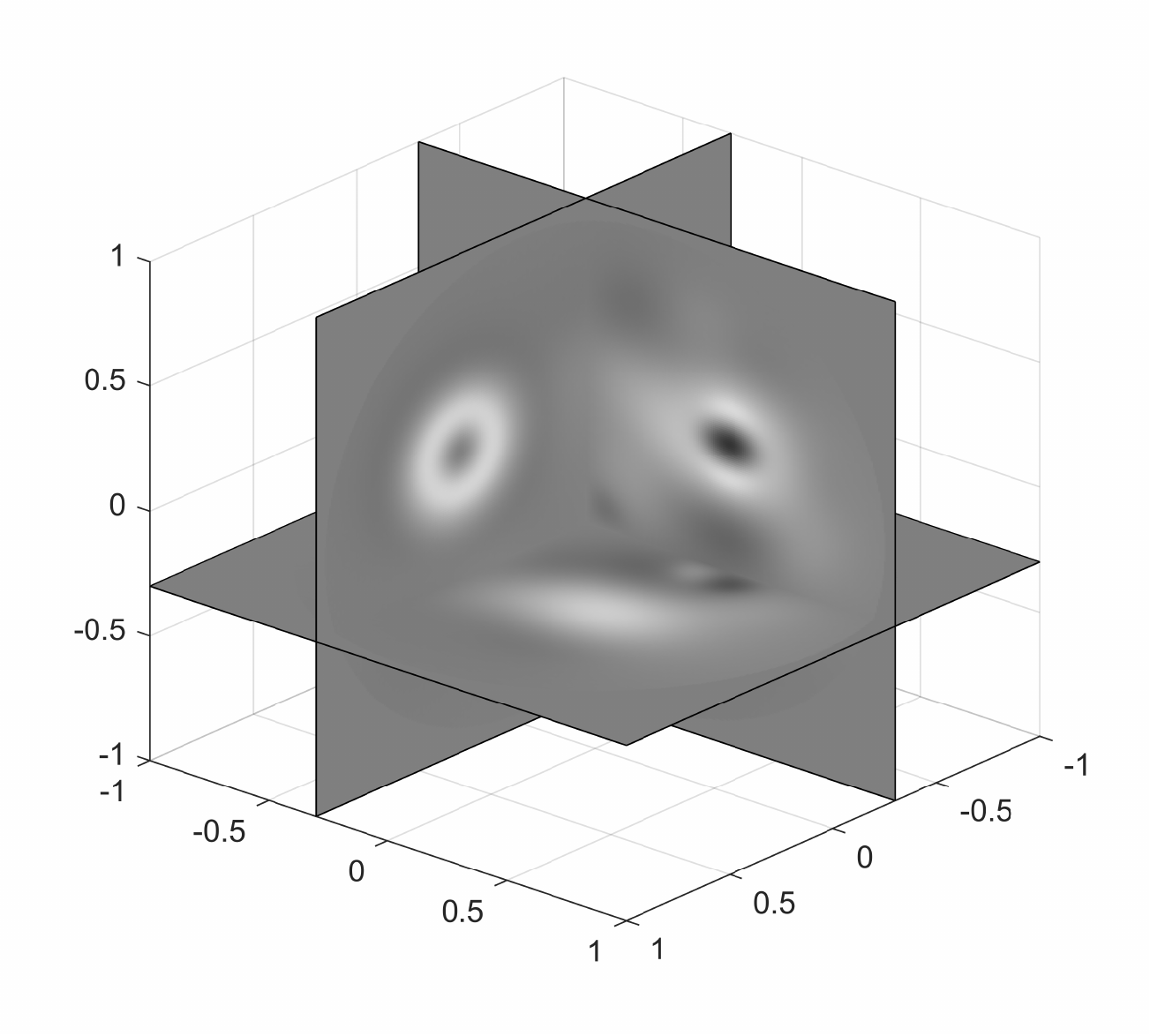} \hspace{-10mm}&
\hspace{-10mm} \includegraphics[scale=0.48]{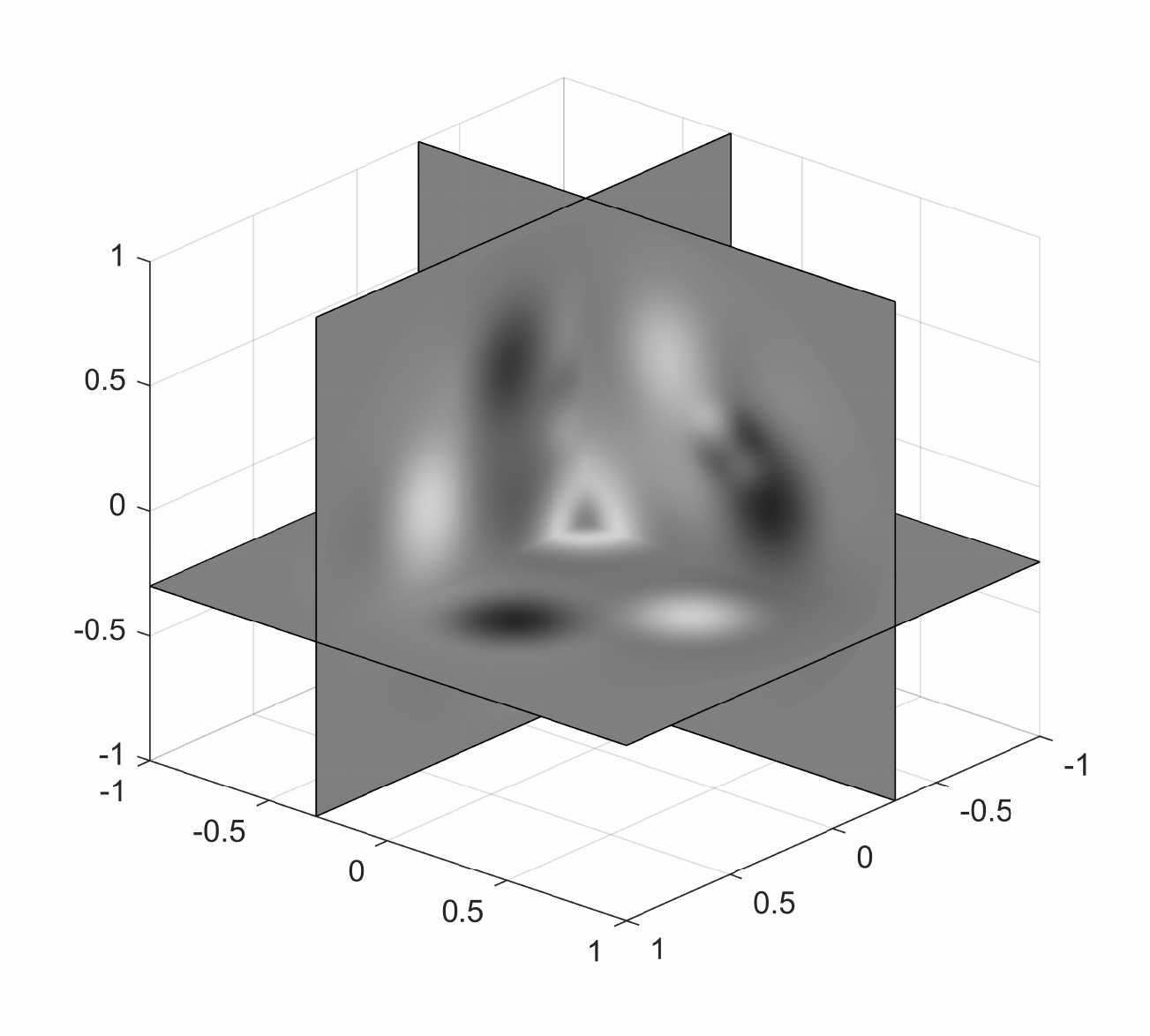}  \\
$F^\mathrm{s}_1(x)$ &  $F^\mathrm{s}_2(x)$  & $F^\mathrm{s}_3(x)$ \\
\\
\includegraphics[scale=0.48]{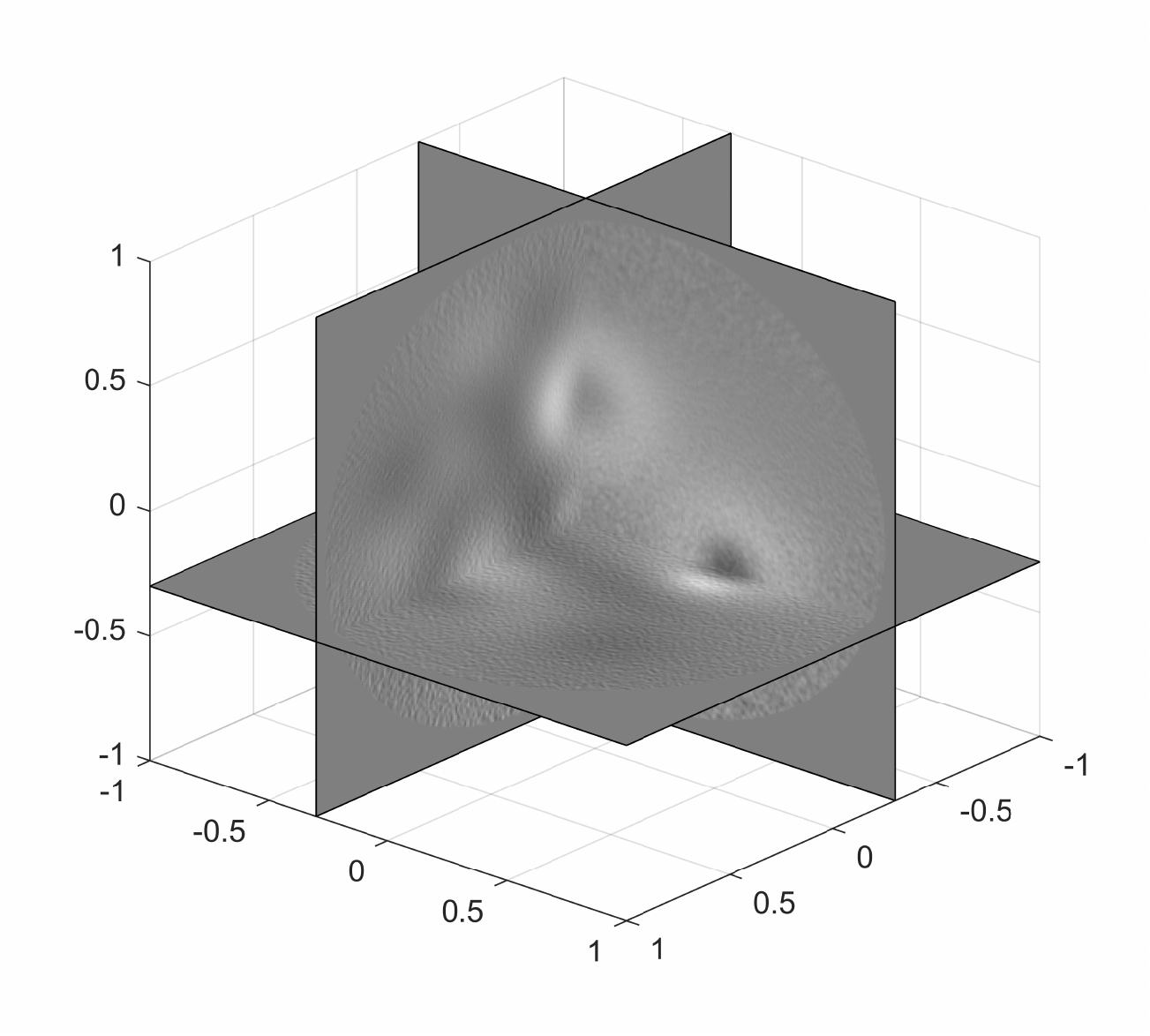} \hspace{-10mm}&
\hspace{-10mm} \includegraphics[scale=0.48]{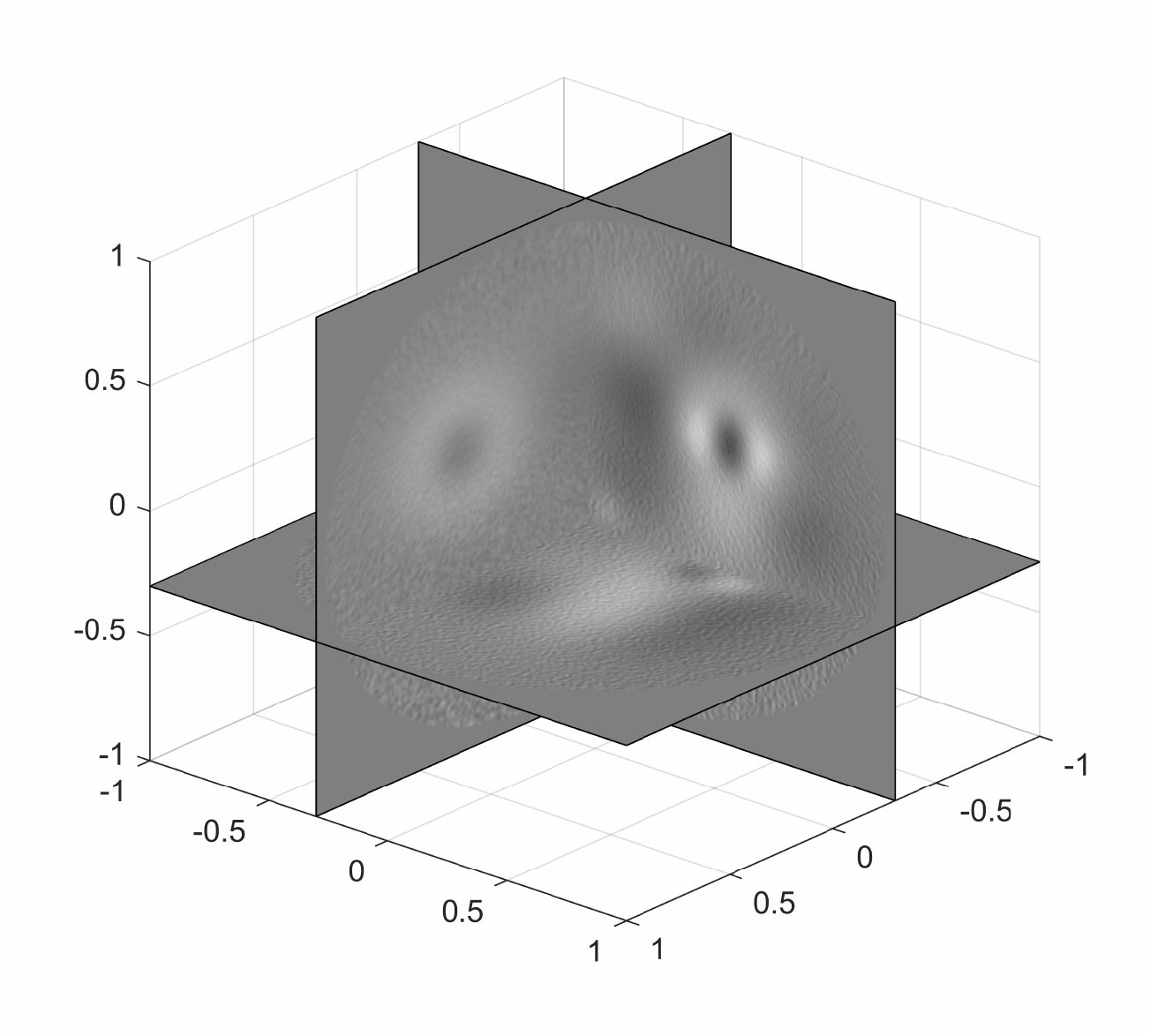} \hspace{-10mm}&
\hspace{-10mm} \includegraphics[scale=0.48]{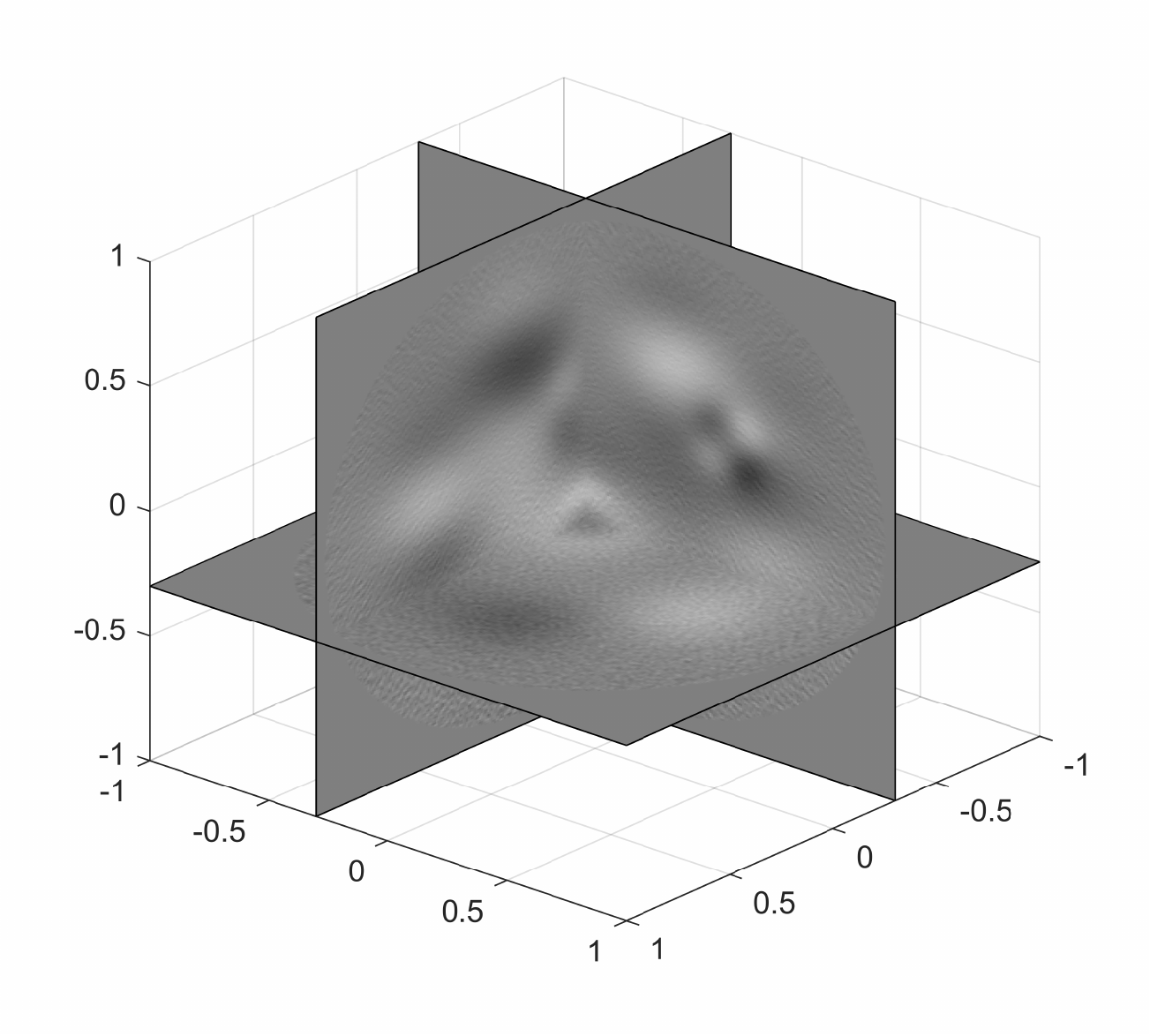}  \\
$F^\mathrm{p}_1(x)$ &  $F^\mathrm{p}_2(x)$  & $F^\mathrm{p}_3(x)$ \\
\\

\end{tabular}
\par
\caption{Solenoidal and potential parts of the field,  $F^\mathrm{s}$ and  $F^\mathrm{p}$, reconstructed from noisy data}
\label{F:rec_noise}%
\end{figure}

In order to test this conclusion we added to the data $\mathcal{D}%
_{1}^{\shortparallel}F$, $\mathcal{D}_{2}^{\shortparallel}F$, and
$\mathcal{W}_{1}^{\shortparallel}F$ a small normally distributed spatially
uncorrelated noise with relative intensity $0.1$\% in $L_{2}$ norm. Spectral
differentiation in $p$ in (\ref{E:invert_for_pot}) and in (\ref{E:3FBP}) was
replaced by the standard second order symmetric finite difference formula.
This has a mild regularizing effect compared with the spectral
differentiation. The fields $F^{\mathrm{s}}(x)$ and $F^{\mathrm{p}}(x)$
reconstructed from the noisy data are shown in Figure~\ref{F:rec_noise}
(the gray scale used in this figure is the same as in Figure~\ref{F:phantom}).
Comparison with the
Figure~\ref{F:rec_nonoise} shows that the solenoidal part $F^{\mathrm{s}}(x)$ is little
affected by this mild noise, while reconstructed $F^{\mathrm{p}}(x)$ contains
much stronger high frequency artifacts (the reader may want to magnify the
figure to see this clearly). Indeed, a quantitative comparison reveals that
the relative error in $F^{\mathrm{s}}(x)$ is 1.1\% in $L_{2}$ norm and 1.3\%
in $L_{\infty}$ norm. On the other hand, the relative error in $F^{\mathrm{p}%
}(x)$ is 63\% in $L_{2}$ norm and 74\% in $L_{\infty}$ norm.

\begin{figure}[t]
\noindent \begin{tabular}{ccc}
\includegraphics[scale=0.48]{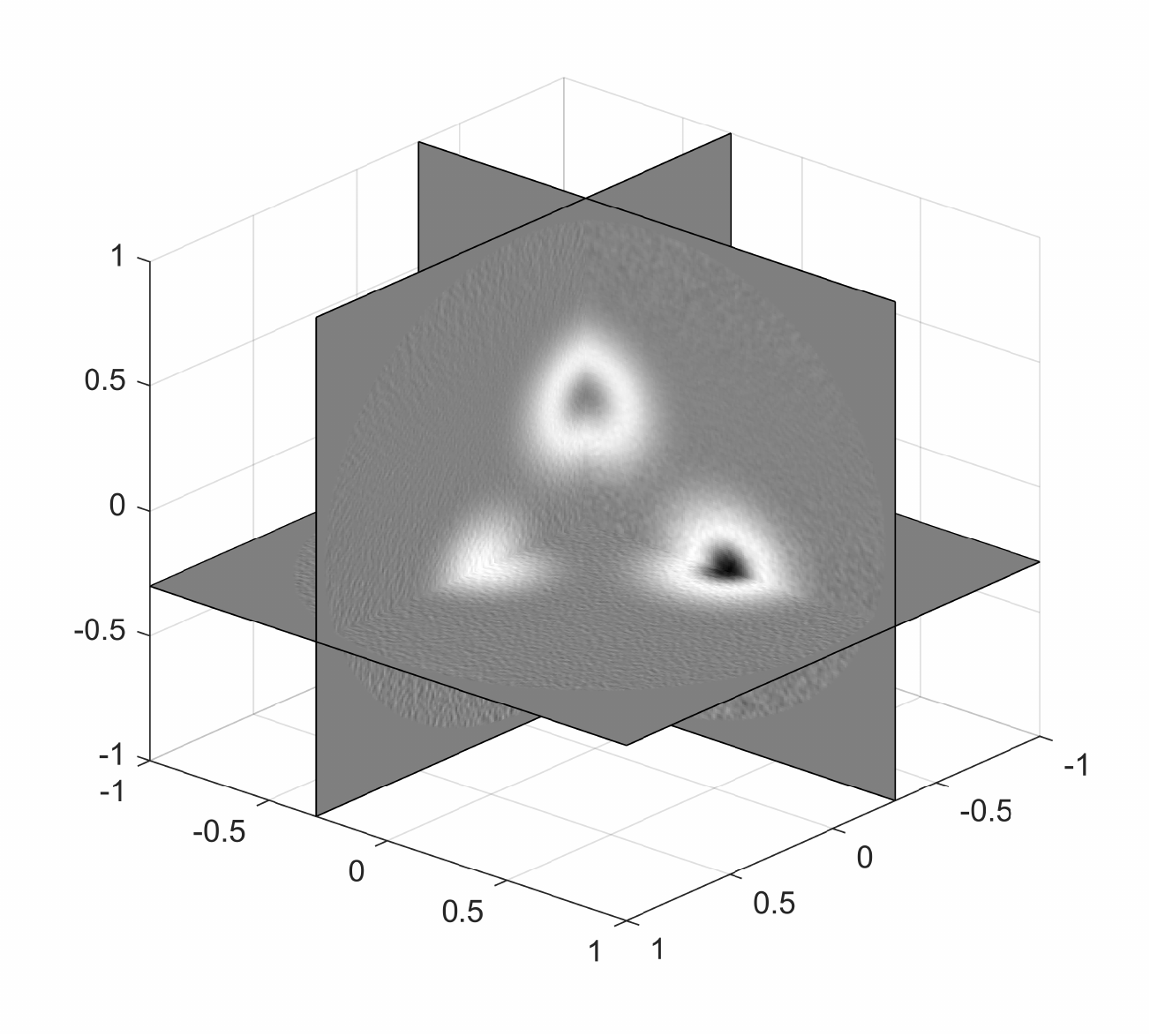} \hspace{-10mm}&
\hspace{-10mm} \includegraphics[scale=0.48]{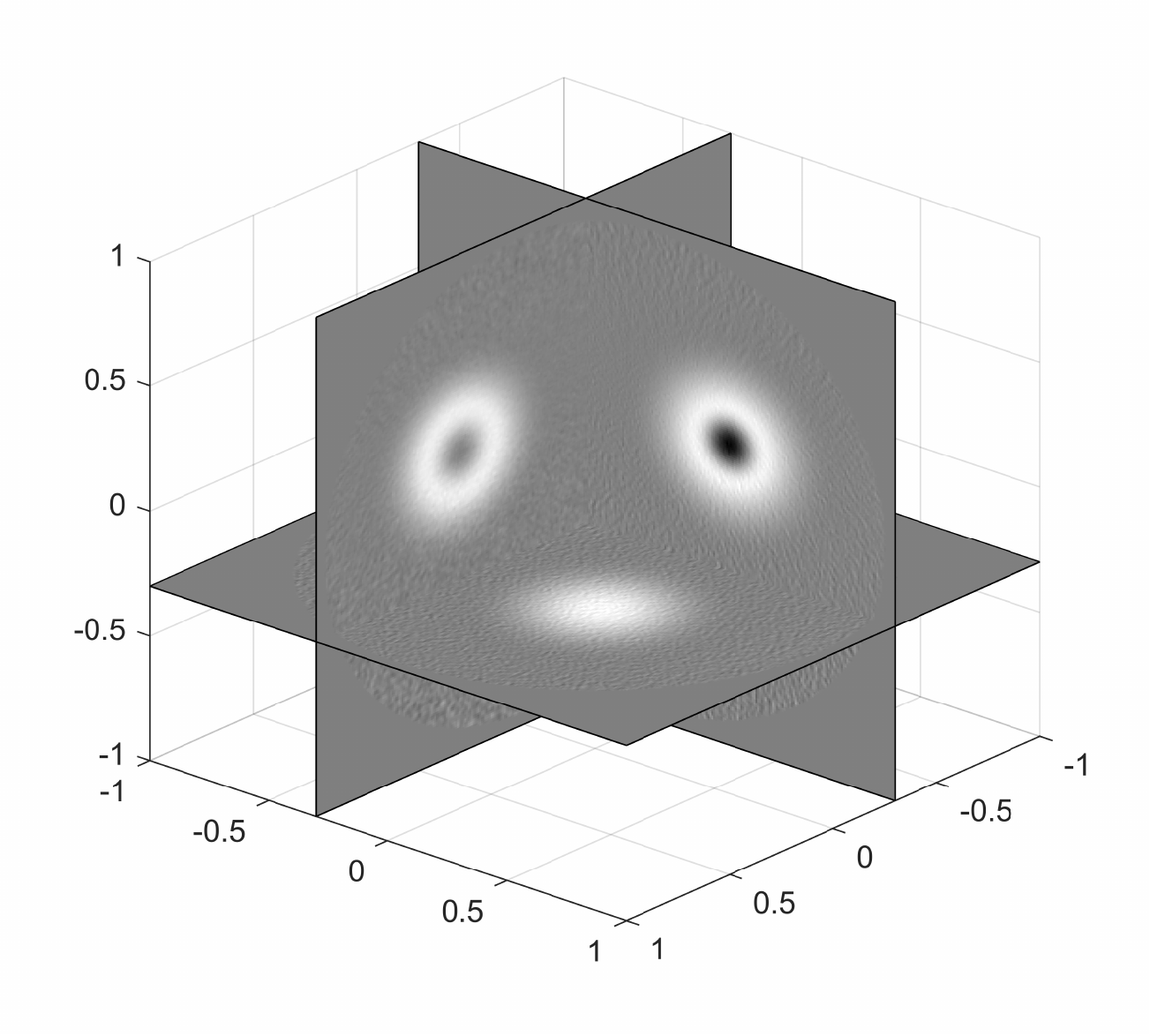} \hspace{-10mm}&
\hspace{-10mm} \includegraphics[scale=0.48]{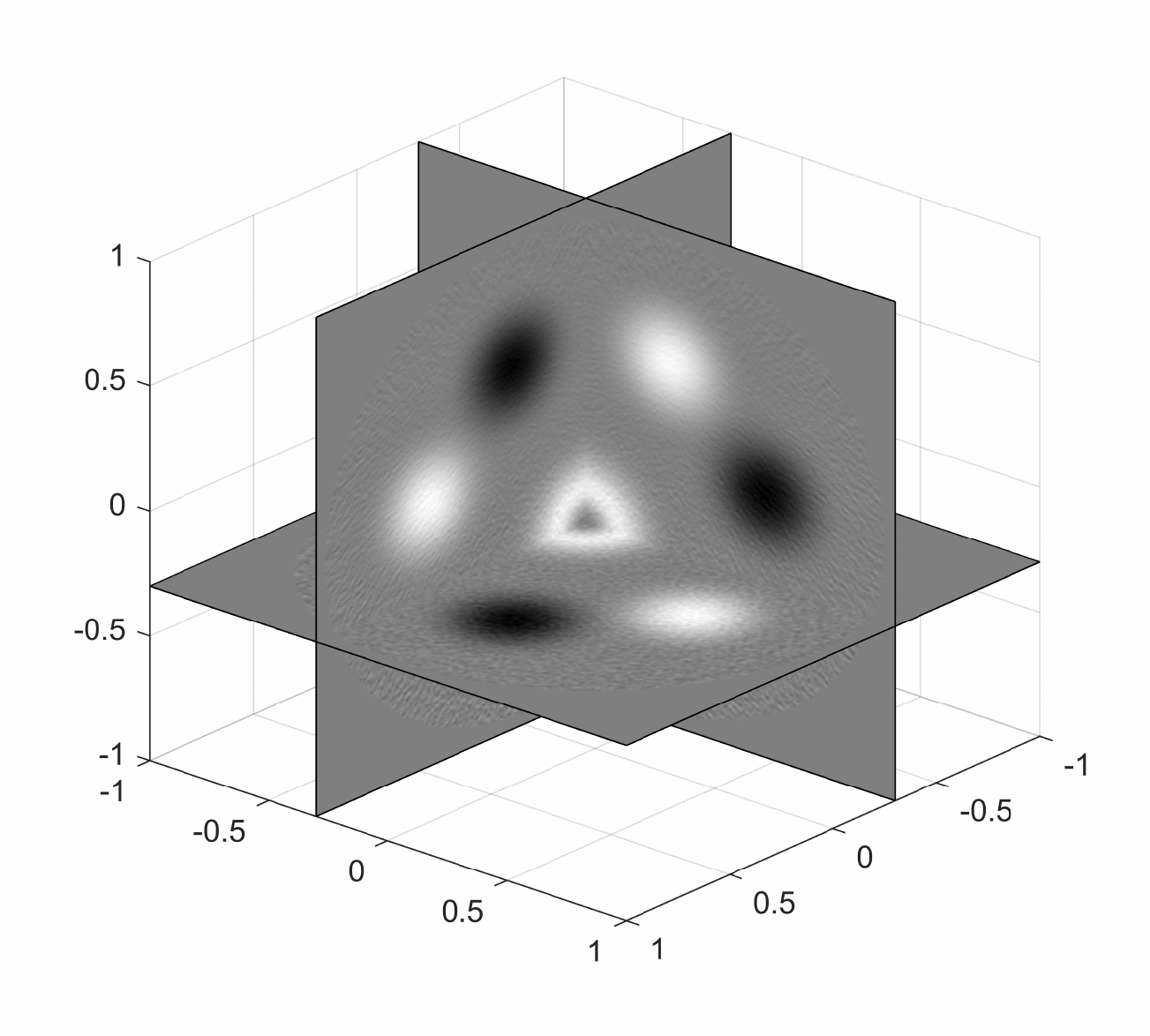}  \\
$F^\mathrm{s}_1(x)$ &  $F^\mathrm{s}_2(x)$  & $F^\mathrm{s}_3(x)$ \\
\end{tabular}
\par
\caption{Field $F$ reconstructed from noisy data}
\label{F:rec_noise_total}%
\end{figure}

{
The total reconstructed field is the sum of $F^{\mathrm{s}}(x)$ and
$F^{\mathrm{p}}(x).$ It is depicted in Figure~\ref{F:rec_noise_total} (the
gray scale is the same as in Figure~\ref{F:phantom}). Due to the high level of
artifacts in $F^{\mathrm{p}}(x),$ the total field also contains significant
error, with the relative error equal to 36\% in $L_{2}$ norm and 41\% in
$L_{\infty}$ norm. It should be noted that the high error in $F^{\mathrm{p}%
}(x)$ is a manifestation of the poor conditioning of the problem of
reconstructing the potential part of the field from a linearly weighted
longitudinal transform $\mathcal{W}_{1}^{\shortparallel}F$. Indeed, formula
(\ref{E:Radon_of_solenoidal}) shows that the Radon transform of $F^{\mathrm{s}%
}$ is expressed as a linear combination of data $\mathcal{D}_{2}%
^{\shortparallel}F$. Thus, the conditioning of finding $F^{\mathrm{s}}$ is
similar to conditioning of inverting the standard scalar Radon transform. On
the other hand, in the equation (\ref{E:usingW_4}) the Radon transform
$\mathcal{R}\left(  F_{k}^{\mathrm{p}}\right)  $ is expressed through the
derivative of the data $\frac{\partial}{\partial p}\,\mathcal{W}%
_{1}^{\shortparallel}\left(  F\right)  $. This additional differentiation of
data makes the problem of reconstructing $F^{\mathrm{p}}$ significantly more
ill-posed than that of inverting the regular Radon transform. This leads to
the appearance of strong high frequency artifacts in the reconstructed
$F^{\mathrm{p}}$.

In order to convince the reader that this is indeed a high-frequency
phenomenon, we applied a low-pass linear filter to the total reconstructed
field $F(x),$ obtaining a smoothed field $F^{\mathrm{smooth}}(x)$. In detail,
each component $F_{k}^{\mathrm{smooth}}(x)$ of $F^{\mathrm{smooth}}(x)$ was
obtained by applying filter $\eta(\xi)$ in the Fourier domain:
\[
F_{k}^{\mathrm{smooth}}(x)=\mathcal{F}^{-1}[\eta(\xi)[\mathcal{F(}F_{k}%
)](\xi)](x),\quad k=1,2,3.
\]
where $\mathcal{F}$ and $\mathcal{F}^{-1}$ are the forward and inverse Fourier
transforms, and filter $\eta(\xi)$ was given by the formula%
\[
\eta(\xi)=0.5\left(  1+\cos\frac{\pi|\xi|}{0.4f^{\mathrm{Nyquist}}}\right)
\text{ for }|\xi|<0.4f^{\mathrm{Nyquist}},\text{ 0 otherwise, }%
\]
where $f^{\mathrm{Nyquist}}$ is the Nyquist frequency of the spatial discretization in $x$.
The relative errors in the so found approximation $F^{\mathrm{smooth}}(x)$ where 12\% in $L_{2}$
norm and 19\% in $L_{\infty}$ norm.

We would like to stress that the reconstruction algorithm presented here,
based on direct discretization of our inversion formulas, is meant only to
illustrate the exactness of these formulas (when applied to accurate data),
and to demonstrate the increased sensitivity of these formulas to noise (in
comparison to the standard Radon inversion). The development of a more
practical, efficient and robust algorithm is a matter of the future work. Such
an algorithm would require a prudent choice of a regularization technique, to
reduce the noise sensitivity. An optimal choice of such technique depends
heavily on the parameters of a particular application, such as the
signal-to-noise ratio, spectral content of the noise, desired resolution, etc.
For a general overview of classical regularization methods we refer the reader
to the book~\cite{tikhonov} and article~\cite{louis1996approximate}. The
regularization methods used recently in vector tomography include the singular
value decomposition~\cite{derevtsov2011singular}, the method of approximate
inverse~\cite{derevtsov2017numerical}, and an expansion in a series of
orthogonal polynomials~\cite{polya2015}. These topics, however, are outside of
the scope of the present paper. }

\section*{Appendix}

In the present Appendix we prove Theorem \ref{T:my_Helm} that establishes the
rates of decay at infinity of the potential and solenoidal parts of the field,
as given by equations (\ref{E:thm_phi})-(\ref{E:thm_second_der}).

We will need the following Lemma.

\begin{lemma}
\label{T:convlemma}Consider convolution $h$ of functions $f$ and $g$ defined
as follows%
\begin{equation}
h(x)=\int\limits_{\mathbb{R}^{d}}f(y)g(x-y)dy,\qquad x\in\mathbb{R}^{d},
\label{E:gen_conv}%
\end{equation}
If $f(x)$ and $g(x)$ are locally integrable and satisfy the inequalities:%
\begin{equation}
|f(x)|\leq\frac{C_{f}}{(1+|x|)^{K}},\qquad|g(x)|\leq\frac{C_{g}}{(1+|x|)^{M}%
},\qquad K>0,\qquad M\geq d+K, \label{E:lemma_conditions}%
\end{equation}
then there is a constant $C$ such that convolution $h(x)$ is bounded as
follows:%
\[
|h(x)|\leq\frac{C}{(1+|x|)^{K}}.
\]

\end{lemma}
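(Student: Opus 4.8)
The plan is to estimate $|h(x)|$ directly from the convolution integral, after replacing $f$ and $g$ by their pointwise bounds from (\ref{E:lemma_conditions}):
\[
|h(x)|\le C_{f}C_{g}\int_{\mathbb{R}^{d}}\frac{1}{(1+|y|)^{K}}\,\frac{1}{(1+|x-y|)^{M}}\,dy .
\]
First I would observe that the result is trivial for bounded $|x|$. Indeed, the bound $(1+|x|)^{-K}\le 1$ shows that $f$ is bounded, while the hypothesis $M\ge d+K>d$ guarantees that $g$ is integrable over $\mathbb{R}^{d}$; hence $h=f\ast g$ is a well-defined bounded function. Since $(1+|x|)^{-K}$ is bounded below on any ball $\{|x|\le R\}$, the desired inequality holds there once the constant $C$ is taken large enough, and it remains only to treat large $|x|$.

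For large $|x|$ I would split the domain of integration into the two regions $\{|y|\le |x|/2\}$ and $\{|y|>|x|/2\}$. On the first region one has $|x-y|\ge |x|-|y|\ge |x|/2$, so the factor $(1+|x-y|)^{-M}$ is at most $C(1+|x|)^{-M}$ and may be pulled out of the integral, leaving $\int_{|y|\le |x|/2}(1+|y|)^{-K}\,dy$. Passing to polar coordinates, this integral is $\mathcal{O}(|x|^{d-K})$ when $K<d$, $\mathcal{O}(\log|x|)$ when $K=d$, and $\mathcal{O}(1)$ when $K>d$; multiplying by the extracted factor $(1+|x|)^{-M}$ and using $M\ge d+K$ shows that the contribution of this region is $\mathcal{O}((1+|x|)^{-K})$. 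On the second region, $|y|>|x|/2$ gives $(1+|y|)^{-K}\le C(1+|x|)^{-K}$, which I pull out; the remaining integral equals $\int_{\mathbb{R}^{d}}(1+|z|)^{-M}\,dz$ after the substitution $z=x-y$, and this converges because $M>d$. Thus the second region also contributes $\mathcal{O}((1+|x|)^{-K})$, and adding the two bounds finishes the proof.

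The only step requiring genuine care is the first region when $K\le d$, where the inner integral $\int_{|y|\le|x|/2}(1+|y|)^{-K}\,dy$ is unbounded in $|x|$, growing like $|x|^{d-K}$ (or $\log|x|$ at $K=d$). One must check that this growth is more than offset by the decay $(1+|x|)^{-M}$ of the pulled-out $g$-factor, and this is precisely where the hypothesis $M\ge d+K$ enters: it forces $d-M\le -K$, so that $|x|^{d-K}(1+|x|)^{-M}\le C(1+|x|)^{-K}$. The second region is routine once $M>d$ guarantees integrability of $g$, and the reduction to large $|x|$ removes any concern about behaviour near the origin.
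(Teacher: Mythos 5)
Your proof is correct and follows essentially the same route as the paper: both split the convolution at $|y|=|x|/2$, using the decay of $g$ on the inner region $\{|y|\le |x|/2\}$ (where $|x-y|\ge |x|/2$) and the decay of $f$ together with the integrability of $g$ on the outer region. The only difference is that on the inner region the paper simply bounds $(1+|y|)^{-K}\le 1$ and multiplies by the ball volume $\mathcal{O}(|x|^{d})$ --- which is precisely where the hypothesis $M\ge d+K$ enters --- so your finer polar-coordinate case analysis ($K<d$, $K=d$, $K>d$) is sharper than necessary but equally valid.
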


\begin{proof}
Note that inequalities (\ref{E:lemma_conditions})\ imply that $g$ is
absolutely integrable over $\mathbb{R}^{d}$:%
\[
\int\limits_{\mathbb{R}^{d}}|g(y)|dy=A<\infty.
\]
For a fixed $x$, split the integral (\ref{E:gen_conv}) as follows:%
\[
h(x)=I_{B}(x)+I_{O}(x),\qquad I_{B}(x)\equiv\int\limits_{ B(R)}
f(y)g(x-y)dy,\qquad I_{O}(x)\equiv\int\limits_{\mathbb{R}^{d}\backslash
B(R)}f(y)g(x-y)dy,
\]
where $B(R)$ is a ball of radius $R=|x|/2$ centered at the origin. Note that
the volume $|B(R)|$ of the ball is
\[
|B(R)|=C_{d}R^{d}=2^{-d}C_{d}|x|^{d},
\]
where $C_{d}$ is the volume of the unit ball in $\mathbb{R}^{d}.$ Obviously,
for any $y\in B(R),$ $|y|\leq R.$ Since $|x|=2R,$ $|x-y|\geq R,$ and%
\[
|g(x-y)|\leq\frac{C_{g}}{(1+|x-y|)^{M}}    \leq\frac{C_{g}}{(1+R)^{M}}%
=\frac{2^{M}C_{g}}{(2+|x|)^{M}}.
\]
Therefore, $I_{B}$ can be bounded as follows%
\begin{equation}
|I_{B}|\leq\frac{C_{g}C_{f}|B(R)|}{(1+R)^{M}}=\frac{C_{g}C_{f}C_{d}|x|^{d}%
}{2^{d-M}(2+|x|)^{M}}\leq\frac{C_{g}C_{f}C_{d}(1+|x|)^{d}}{2^{d-M}(1+|x|)^{M}%
}\leq\frac{C_{g}C_{f}C_{d}}{2^{d-M}(1+|x|)^{K}}. \label{E:IB_ineq}%
\end{equation}
On the other hand, for $y\in\mathbb{R}^{d}\backslash B(R),$
$|f(y)|$ can be bounded by $\frac{C_{f}}{(1+R)^{K}}$ so that the following
inequality holds%
\begin{equation}
|I_{O}|\leq\int\limits_{\mathbb{R}^{d}\backslash B(R)}|f(y)||g(x-y)|dy\leq
\frac{C_{f}}{(1+R)^{K}}\int\limits_{\mathbb{R}^{d}}|g(y)|dy\leq A\frac
{2^{K}C_{f}}{(2+|x|)^{K}}\leq A\frac{2^{K}C_{f}}{(1+|x|)^{K}}.
\label{E:IO_ineq}%
\end{equation}
Finally, by combining inequalities (\ref{E:IB_ineq}) and (\ref{E:IO_ineq}) one
proves Lemma{~\ref{T:convlemma}}.
\end{proof}

We are ready to prove Theorem \ref{T:my_Helm}.

\begin{proof}
First, we establish the rate of decay at infinity of the potential
$\varphi(x)$ given by the convolution~(\ref{E:potential_conv}). We note that
divergence $\Phi(x)$ belongs to the Schwartz space $\mathcal{S}(\mathbb{R}%
^{d})$ and, therefore, for any $l\geq0,$ there is a constant $C_{l}$ such that
$|\Phi(x)|\leq C_{l}/(1+|x|^{l}).$ On the other hand, the derivatives of the
fundamental solution $G(x)$ decay as follows:%
\begin{equation}
|D^{\alpha}G(x)|=\mathcal{O}\left(  \frac{1}{|x|^{d+  {|\alpha|}   -2}}\right)
,\qquad|\alpha|= 1,2,3,4. \label{E:decay_of_G}%
\end{equation}
Let us introduce an infinitely smooth nonnegative cut-off function $\eta(t),$
$t\in\mathbb{R}$, with $\eta(t)=1$ for every $t\in(-1/2,1/2)$ and $\eta(t)=0$
for $|t|\geq1.$ Convolution (\ref{E:potential_conv}) can be re-written as%
\begin{align*}
\varphi(x)  &  =I_{1}(x)+I_{2}(x),\\
I_{1}(x)  &  \equiv\int\limits_{|x-y|<1}\Phi(y)G(x-y)\eta(|x-y|)dy,\quad
I_{2}(x)\equiv\int\limits_{\mathbb{R}^{d}}\Phi(y)G(x-y)(1-\eta(|x-y|))dy,
\end{align*}
The first term $I_{1}(x)$ can be bounded as%
\[
\left\vert I_{1}(x)\right\vert =\left\vert \int\limits_{|x-y|<1}%
\Phi(y)G(x-y)\eta(|x-y|)dy\right\vert =\left\vert \int\limits_{|u|<1}%
\Phi(x-u)G(u)\eta(|u|)du\right\vert \leq C_{G}\max_{|u|<1}|\Phi(x-u)|,
\]
where
\begin{equation}
C_{G}\equiv\int\limits_{|u|<1}|G(u)|\eta(|u|)du. \label{E:CG}%
\end{equation}
Then $ {|}  I_{1}(x)|$ is bounded by $C_{G}C_{l}/(1+(|x|-1)^{l})$ for any $l\geq0.$

The second term is the following convolution
\begin{align*}
I_{2}(x)  &  =\int\limits_{\mathbb{R}^{d}}G(x-y)(1-\eta(|x-y|))\nabla_{y}\cdot
F(y)dy=\int\limits_{\mathbb{R}^{d}}\nabla_{x}\left[  G(x-y)(1-\eta
(|x-y|))\right]  \cdot F(y)dy\\
&  =\sum_{j=1}^{d}\int\limits_{\mathbb{R}^{d}}\frac{\partial}{\partial x_{j}%
}\left[  G(x-y)(1-\eta(|x-y|))\right]  F_{j}(y)dy
\end{align*}
The latter sum is the sum of convolutions of functions satisfying conditions
of Lemma \ref{T:convlemma}, where the role of $f$ is played by $F_{j}$ with
$M\geq2d-1$ (since $F_{j}$'s are Schwartz functions), and the role of $g$ is
played by $\frac{\partial}{\partial x_{j}}\left[  G(x)(1-\eta(|x|))\right]  $
with $K=d-1.$ Therefore, $I_{2}(x)$ has the desired rate of decay
$\mathcal{O}\left(  |x|^{1-d}\right)  .$ This term dominates the sum
$I_{1}(x)+I_{2}(x)$ at infinity. This proves equation (\ref{E:thm_phi}).

The estimate for the derivatives of $\varphi(x)$ can be obtained in a similar
way. Indeed%
\[
\frac{\partial}{\partial x_{j}}\varphi(x)=\frac{\partial}{\partial x_{j}}%
\int\limits_{\mathbb{R}^{d}}\Phi(y)G(x-y)dy=I_{3}(x)+I_{4}(x),
\]
where%
\[
I_{3}(x)\equiv\frac{\partial}{\partial x_{j}}\int\limits_{|x-y|<1}%
\Phi(y)G(x-y)\eta(|x-y|)dy,\qquad I_{4}(x)\equiv\frac{\partial}{\partial
x_{j}}\int\limits_{\mathbb{R}^{d}}\Phi(y)G(x-y)(1-\eta(|x-y|))dy.
\]
Now%
\begin{align*}
\left\vert I_{3}(x)\right\vert  &  =\left\vert \frac{\partial}{\partial x_{j}%
}\int\limits_{|u|<1}\Phi(x-u)G(u)\eta(|u|)du\right\vert =\left\vert
\int\limits_{|u|<1}\frac{\partial}{\partial x_{j}}\Phi(x-u)G(u)\eta
(|u|)du\right\vert \\
&  \leq C_{G}\max_{|u|<1}\left\vert \frac{\partial}{\partial x_{j}}%
\Phi(x-u)\right\vert =C_{G}\max_{|u|<1}\left\vert \frac{\partial}{\partial
x_{j}}\sum_{k=1}^{d}\frac{\partial}{\partial x_{k}}F_{k}(x-u)\right\vert ,
\end{align*}
where $C_{G}$ still given by (\ref{E:CG}). Since second derivatives of $F$ are
Schwartz functions, $|I_{3}(x)|$ decays faster than any power of $1/|x|.$ For
the term $I_{4}(x)\ $we observe:%
\begin{align*}
I_{4}(x)  &  \equiv\frac{\partial}{\partial x_{j}}\int\limits_{\mathbb{R}^{d}%
}\Phi(y)G(x-y)(1-\eta(|x-y|))dy=\frac{\partial}{\partial x_{j}}\sum_{j=k}%
^{d}\int\limits_{\mathbb{R}^{d}}\frac{\partial}{\partial x_{k}}\left[
G(x-y)(1-\eta(|x-y|))\right]  F_{k}(y)dy\\
&  =\sum_{j=k}^{d}\int\limits_{\mathbb{R}^{d}}\frac{\partial^{2}}{\partial
x_{k}\partial x_{j}}\left[  G(x-y)(1-\eta(|x-y|))\right]  F_{k}(y)dy.
\end{align*}
The rate of decay of derivatives $\frac{\partial^{2}}{\partial x_{k}\partial
x_{j}}\left[  G(x-y)(1-\eta(|x-y|))\right]  $ coincides with the decay rate of
$\frac{\partial^{2}}{\partial x_{k}\partial x_{j}}G(x);$ it is given by
(\ref{E:decay_of_G}). Now, the application of Lemma \ref{T:convlemma}
establishes that $|I_{4}(x)|=\mathcal{O}\left(  |x|^{-d}\right)  .$ This
proves (\ref{E:thm_fields}) for $F^{\mathrm{p}}.$ The similar estimate for
$F^{\mathrm{s}}$ comes from $F^{\mathrm{s}}(x)=F(x)-F^{\mathrm{p}}(x),$ where
the second term dominates at infinity.

Finally, equations (\ref{E:thm_derivatives}) and (\ref{E:thm_second_der}) are
proven similarly, by transferring the derivatives onto $G(x)$ and using
(\ref{E:decay_of_G}) with $|\alpha|=3$ and $|\alpha|=4$, combined with Lemma~{\ref{T:convlemma}}.
\end{proof}

\bigskip

\section*{Acknowledgments}

The first author acknowledges support by the NSF, through the award NSF/DMS
1814592. The third author was supported in part by NSF grant DMS-1937229
through the Data Driven Discovery RTG's summer REU program at the University
of Arizona.



\begin{thebibliography}{10}

\bibitem{norton-vector89}
Norton S J 1989 Tomographic reconstruction of 2-d vector fields: application to flow
  imaging \textit{ Geophysical Journal International} \textbf{97}(1) 161--168

\bibitem{norton-vector92}
Norton S J 1992 Unique tomographic reconstruction of vector fields using boundary
  data \textit{ IEEE Transactions on image processing} \textbf{1}(3) 406--412

\bibitem{strahlen-review}
Sparr G and Str{\aa}hl{\'e}n K 1999
Vector field tomography: an overview \textit{ IMA Volumes in Mathematics and its Applications; Computational
  Radiology and Imaging: Therapy and Diagnostic} 110

\bibitem{schuster-review}
Schuster T 2008
 20 years of imaging in vector field tomography: a review
\textit{ Mathematical Methods in Biomedical Imaging and Intensity-Modulated Radiation Therapy (IMRT)}, volume~7.
(CRM), Birkh{\"a}user

\bibitem{shar-book}
 Sharafutdinov V A 2012
\textit{ Integral geometry of tensor fields}, volume~1.
(Walter de Gruyter)

\bibitem{strahlen-expo}
Str{\aa}hl{\'e}n K 1997
Exponential vector field tomography
\textit{ International Conference on Image Analysis and Processing},
   (Springer) 348--355

\bibitem{bukh-kaz-vector}
Bukhgeim A A and Kazantsev S G 2003
Full reconstruction of a vector field from its attenuated vectorial Radon transform
In \textit{ Modelling, Identification and Control} 294--298

\bibitem{natt-vector}
Natterer F 2005
Inverting the attenuated vectorial Radon transform.
\textit{ J. Inverse Ill-posed Problems} \textbf{13}(1) 93--101

\bibitem{bal-atten}
Bal G 2004
On the attenuated Radon transform with full and partial measurements
\textit{ Inverse Problems} \textbf{20}(2) 399--418

\bibitem{shar-mom}
Krishnan V P,  Manna R, Sahoo S-K and  Sharafutdinov V A 2019
  Momentum ray transforms \textit{ Inverse Problems and Imaging} \textbf{13}(3) 679--701

\bibitem{mishra-weight}
Mishra R K  2020
  Full reconstruction of a vector field from restricted Doppler and
  first integral moment transforms in $\mathbb{R}^n$
  \textit{ Journal of Inverse and Ill-posed Problems} \textbf{28}(2) 173--184

\bibitem{polya2015}
Polyakova A 2015
  Reconstruction of a vector field in a ball from its normal Radon
  transform  \textit{ Journal of Mathematical Sciences} \textbf{205}(3) 418-439

\bibitem{polya2015num}
Polyakova A P and Svetov I E 2015
  Numerical solution of the problem of reconstructing a potential
  vector field in the unit ball from its normal Radon transform.
  \textit{ Journal of Applied and Industrial Mathematics} \textbf{9}(4) 547--558

\bibitem{NattBook} Natterer F 2001 \textit{The Mathematics of Computerized Tomography (Classics in Applied Mathematics)}
(Society for Industrial Mathematics) p 184

\bibitem{helgason2013Radon}
Helgason S 1999
  \textit{ The Radon Transform} Progress in Mathematics, volume~5
  (Springer)

\bibitem{Wen}
Wen H, Shah J and Balaban R S 1998
  Hall effect imaging.
  \textit{ IEEE transactions on biomedical engineering} \textbf{45}(1) 119--124

\bibitem{Grasland}
Grasland-Mongrain P,  Mari J-M,  Chapelon J-Y, and Lafon C 2013
  Lorentz force electrical impedance tomography
  \textit{ IRBM} \textbf{34}(4-5) 357--360

\bibitem{Roth}
Roth B J and Schalte K 2009
  Ultrasonically-induced Lorentz force tomography
  \textit{ Medical \& biological engineering \& computing} \textbf{47}(6) 573--577

\bibitem{Zengin}
Zengin R and Gen{\c{c}}er N G 2016
  Lorentz force electrical impedance tomography using magnetic field
  measurements \textit{ Physics in Medicine \& Biology} \textbf{61}(16) 5887

\bibitem{Montalibet}
Montalibet A, Jossinet J, Matias A, and Cathignol D 2001
  Electric current generated by ultrasonically induced Lorentz force in
  biological media
  \textit{ Medical and Biological Engineering and Computing} \textbf{39}(1) 15--20

\bibitem{Kun-MAET}
Kunyansky L 2012 A mathematical model and inversion procedure for
  magneto-acousto-electric tomography
  \textit{ Inverse problems}  \textbf{28}(3) 035002

\bibitem{Ammari2015}
Ammari H, Grasland-Mongrain P, Millien P, Seppecher L, and
   Seo J-K 2015
  A mathematical and numerical framework for ultrasonically-induced
  Lorentz force electrical impedance tomography
  \textit{ Journal de Math{\'e}matiques Pures et Appliqu{\'e}es},
  \textbf{103}(6) 1390--1409

\bibitem{Kuku}
Kuchment P and Kunyansky L 2008
  Mathematics of thermoacoustic tomography.
  \textit{ Euro. J. Appl. Math.} \textbf{19} 191--224

\bibitem{KuKuHand}
Kuchment P and Kunyansky L 2015
  Mathematics of photoacoustic and thermoacoustic tomography.
  textit{ Handbook of mathematical methods in imaging}
  (Springer, New York)  1117--1167

\bibitem{KIW-eng}
Kunyansky L, Ingram C P and Witte R S 2017
  Rotational magneto-acousto-electric tomography \textsc{(MAET)}:
  Theory and experimental validation
  \textit{ Physics in Medicine \& Biology}  \textbf{62}(8) 3025

\bibitem{Sun-rapid}
Sun T,  Hao P,  Chin C-T,  Deng D,  Chen T, Chen Y,
  Chen M,  Lin H,  Lu M  and  Gao Y 2021
  Rapid rotational magneto-acousto-electrical tomography with filtered
  back-projection algorithm based on plane waves.
  \textit{ Physics in Medicine \& Biology} \textbf{66}(9) 095002

\bibitem{Xia-MAET-lgus}
Xia H, Ding G and  Liu G 2019
  Magneto-acousto-electrical tomography with magnetic induction based
  on laser-generated ultrasound transducer
  \textit{ Journal of Medical Imaging and Health Informatics}
  \textbf{9}(1) 183--187

\bibitem{Xia-MAET-lgus-SPIE}
Ding G, Xia H, Li X, and Liu G 2018
  Experimental study of magneto-acousto-electrical tomography based on
  laser-generated ultrasound technology
  \textit{ Tenth International Conference on Information Optics and
  Photonics}, volume 10964, page 109646A. International Society for Optics and
  Photonics

\bibitem{tikhonov}
Tikhonov A N and  Arsenin V Ia 1997
  \textit{ Solutions of Ill-posed Problems}
  (Wiley)

\bibitem{louis1996approximate}
Louis A K 1996
  Approximate inverse for linear and some nonlinear problems
  \textit{ Inverse problems} \textbf{12}(2) 175--190

\bibitem{derevtsov2011singular}
Derevtsov E Yu, Efimov A V, Louis A K, and  Schuster T 2011
  Singular value decomposition and its application to numerical
  inversion for ray transforms in 2d vector tomography
  \textit{ Journal of Inverse \& Ill-Posed Problems} \textbf{19} 689-715

\bibitem{derevtsov2017numerical}
 Derevtsov E Yu, Louis A K, Maltseva S V, Polyakova A P, and Svetov I E 2017
  Numerical solvers based on the method of approximate inverse for 2D
  vector and 2-tensor tomography problems
  \textit{ Inverse Problems} \textbf{33}(12) 124001

\end{thebibliography}
\end{document}